\DeclareMathAlphabet{\mathpzc}{OT1}{pzc}{m}{it}
\newcommand{\norm}[1]{{\left\vert\kern-0.25ex\left\vert\kern-0.25ex\left\vert #1 
\right\vert\kern-0.25ex\right\vert\kern-0.25ex\right\vert}}
\newcommand{\TheTitle}{
Darcy's problem coupled with the heat equation under singular forcing: analysis and discretization}
\newcommand{\ShortTitle}{Darcy's problem coupled
with the heat equation}
\newcommand{\TheAuthors}{A. Allendes, G. Campa\~na, F. Fuica and E.~Ot\'arola}
\headers{\ShortTitle}{\TheAuthors}
\title{{\TheTitle}\thanks{AA is partially supported by ANID through FONDECYT project 1170579. GC was supported by ANID through Beca doctorado nacional 21200920. FF is supported by UTFSM through Beca de Mantenci\'on. EO is partially supported by ANID through FONDECYT project 11180193.}}
\author{Alejandro Allendes\thanks{Departamento de Matem\'atica, Universidad T\'ecnica Federico Santa Mar\'ia, Valpara\'iso, Chile.
(\email{alejandro.allendes@usm.cl}, \url{http://aallendes.mat.utfsm.cl/}).}
\and 
Gilberto Campa\~na\thanks{Departamento de Ciencias, Universidad T\'ecnica Federico Santa Mar\'ia, Valpara\'iso, Chile.
(\email{gilberto.campana@usm.cl}.}
\and
Francisco Fuica\thanks{Departamento de Matem\'atica, Universidad T\'ecnica Federico Santa Mar\'ia, Valpara\'iso, Chile.
(\email{francisco.fuica@sansano.usm.cl}.}  
\and
Enrique Ot\'arola\thanks{Departamento de Matem\'atica, Universidad T\'ecnica Federico Santa Mar\'ia, Valpara\'iso, Chile.
(\email{enrique.otarola@usm.cl}, \url{http://eotarola.mat.utfsm.cl/}).}   
}
\date{Draft version of \today.}
\begin{document}

\maketitle

\begin{abstract}
We study the existence of solutions for Darcy's problem coupled with the heat equation under singular forcing; the right-hand side of the heat equation corresponds to a Dirac measure. The studied model allows thermal diffusion and viscosity depending on the temperature. We propose a finite element solution technique and analyze its convergence properties. In the case that the thermal diffusion is constant, we propose an a posteriori error estimator and investigate reliability and efficiency properties. We illustrate the theory with numerical examples.
\end{abstract}

\begin{keywords}
nonlinear Darcy's equations, singular heat equation, Dirac measures, finite element approximation, a posteriori error estimates.
\end{keywords}

\begin{AMS}
35R06,               
65N12,               
65N15, 		         
65N50,               
76S05.  	         
\end{AMS}


\section{Introduction}

\label{sec:intro}

In this work we are interested in the analysis and discretization of the temperature distribution of a fluid in a porous medium modelled by a convection--diffusion equation coupled with Darcy's law. To make matters precise, we let $\Omega\subset\mathbb{R}^2$ be an open and bounded domain with Lipschitz boundary $\partial\Omega$. We are interested in the analysis and discretization of the following system of partial differential equations (PDEs) in its strong form:
\begin{equation}\label{eq:model}
\left\{
\begin{array}{rcll}
\nu(T) \mathbf{u} + \nabla \mathsf{p} & = & \mathbf{f} & \text{ in }\Omega,\\
\text{div}~\mathbf{u} & = & 0 & \text{ in }\Omega, \\
-\text{div}(\kappa(T)\nabla T)+\text{div}(\mathbf{u}~T) & = & \textnormal{g} & \text{ in }\Omega,\\
\mathbf{u}\cdot \mathbf{n} & = & 0 & \text{ on }\partial\Omega,\\
T & = & 0 & \text{ on }\partial\Omega.
\end{array}
\right.
\end{equation}
The unknowns are the velocity field $\mathbf{u}$, the pressure $\mathsf{p}$, and the temperature $T$ of the fluid, respectively. The data are the viscosity coefficient $\nu$, the thermal diffusivity coefficient $\kappa$, the external density force $\mathbf{f}$, and the external heat source $\textnormal{g}$. The viscosity and thermal diffusivity coefficients may depend nonlineary on the temperature $T$. In \eqref{eq:model}, $\mathbf{n}$ denotes the unit outward normal vector on $\partial\Omega$. In this work we are particularly interested in the case that $\textnormal{g}=\delta_z$, where $\delta_z$ corresponds to the Dirac delta distribution supported at the interior point $z\in\Omega$.

The analysis and discretization of the heat equation coupled with Darcy's law by a nonlinear viscosity depending on the temperature have been studied in a number of works. To the best of our knowledge, the first article that considers such a problem is \cite{MR3802674}. In this work the authors derive existence of solutions, without restriction on the data, by Galerkin's method and Brouwer's fixed point theorem \cite[Theorem 2.3]{MR3802674}; uniqueness is established when the data are suitably restricted \cite[Theorem 2.6]{MR3802674}. In addition, the authors of \cite{MR3802674} propose and analyze two numerical schemes based on finite element methods and derive optimal a priori error estimates. Recently, the results of \cite{MR3802674} have been complemented and extended in \cite{MR4036533}, where the authors introduce a new non-stabilized method and prove, for a sufficiently small mesh-size, existence and uniqueness of a solution; a priori error estimates are also derived. Later, in \cite{MR4041519}, the authors devise and analyze a posteriori error estimators for the two numerical schemes considered in \cite{MR3802674}. In the recent work \cite{GMRB}, the authors analyze a new fully--mixed finite element method based on the introduction of the pseudoheat flux as a further unknown. The authors prove the unique solvability of the underlying continuous formulation, present a discrete formulation, and derive a priori error estimates. We conclude this paragraph by mentioning the work \cite{MR3523581}, where a different coupling of Darcy's system with the heat equation is analyzed: the viscosity $\nu$ is constant but the exterior force $\mathbf{f}$ depends on the temperature. In this work, the authors provide existence and uniqueness results and analyze a spectral discretization.

When, in system \eqref{eq:model}, with smooth forcing, the Darcy's system is replaced by the stationary Navier--Stokes equations, we arrive at the classical and generalized steady state Boussinesq problems \cite{MR1370148,MR1047471}. These problems, which are particular instances of an incompressible nonisothermal fluid flow model, have been extensively studied over the last decades; it is thus no surprise that their analysis and approximation, at least in energy--type spaces, are very well developed. For a variety of finite element solution techniques used to discretize the classical and generalized steady state Boussinesq problems, we refer the interested reader to the following nonextensive list of references: \cite{MR1051838,MR1364404,MR1681112,MR2111747,MR2802085,MR3267163,MR3232446,MR3454217,MR3918673,MR3845205,MR4053090,MR4265062,MR4080228}; see also the references therein. To conclude this paragraph, we mention the work \cite{MR4265062}, where the authors study, on the basis of weighted estimates and weighted Sobolev spaces, existence and approximation results for a Boussinesq model of thermally driven convection under singular forcing; a posteriori error estimates are also analyzed.

To best of our knowledge, this is the first work that analyzes problem \eqref{eq:model} with singular data. Our main source of difficulty and interest here is that the external heat source is rough or singular. As a result \emph{standard energy arguments do not apply}; the fluid velocity and the temperature lie in different spaces. In addition, the temperature $T$ exhibits reduced regularity properties: $T \in W_0^{1,p}(\Omega) \setminus H_0^1(\Omega)$, with $p<2$. This and the fact that the velocity component of a solution to the Darcy's problem has very low regularity, namely, $\mathbf{u}\in \mathbf{H}_0(\textnormal{div},\Omega)$, complicate both the analysis of the continuous problem and the study of discretization techniques. Regarding discretization, we devise suitable adaptive finite element methods (AFEMs) to solve \eqref{eq:model}. These techniques are motivated by the fact that $T$ exhibits reduced regularity properties. In what follows we list what, we believe, are the main contributions of our work:

\begin{itemize}
\item \emph{Existence of solutions:} We introduce a concept of weak solution within the space $\mathbf{H}_0(\textnormal{div},\Omega)\times L_0^2(\Omega)\times W_0^{1,p}(\Omega)$, with $p<2$, and show, on the basis of a fixed point argument, the existence of solutions; see Theorem \ref{thm:existence}.

\item \emph{Discretization:} We discretize the coupled system \eqref{eq:model} by using the Raviart--Thomas finite element space of order zero, piecewise constant finite elements, and continuous piecewise linear finite elements for the velocity, the pressure, and the temperature, respectively. Under suitable assumptions on data we prove, in Theorem \ref{thm:existence_discrete}, the existence of discrete solutions and, in Theorem \ref{thm:convergence}, the existence of a subsequence that weakly converges to a solution of the continuous problem.

\item \emph{A posteriori error estimates:} We devise a residual--based a posteriori error estimator for the proposed finite element discretization of system \eqref{eq:model} that can be decomposed as the sum of three individual contributions: one contribution that accounts for the discretization of the heat equation and two contributions related to the discretization of the Darcy's system. We prove, in Theorem \ref{thm:global_reliability}, that the devised error estimator is globally reliable. We explore local efficiency estimates in Section \ref{sec:efficiency_estimates}.
\end{itemize}

The rest of the manuscript is organized as follows. We set notation and collect background information in Section \ref{sec:notation}. In Section \ref{sec:coupled_problem}, we introduce a notion of weak solution for problem \eqref{eq:model} and analyze the existence of solutions. A numerical discretization technique for problem \eqref{eq:model} is proposed in Section \ref{sec:fem}, where we also analyze convergence properties of discretizations. In Section \ref{sec:a_posteriori_anal}, we design and analyze an a posteriori error estimator for the proposed finite element scheme. We derive global reliability properties and explore local efficiency estimates. Finally, a series of numerical experiments are presented in Section \ref{sec:numericalexperiments}, which illustrate the theory and reveal a competitive performance of AFEMs based on the devised a posteriori error estimator.


\section{Notation and preliminaries} 
\label{sec:notation}
Let us set notation and describe the setting we shall operate with.


\subsection{Notation}
Let $d \in \{1,2\}$ and $\mathcal{O} \subset \mathbb{R}^d$ be an open and bounded domain. We shall use standard notation for Lebesgue and Sobolev spaces. The space of functions in $L^2(\mathcal{O})$ that have zero average is denoted by $L^2_0(\mathcal{O})$. By $W^{m,r}(\mathcal{O})$, we denote the Sobolev space of functions in $L^r(\mathcal{O})$ with partial derivatives of order up to $m$ in $L^r(\mathcal{O})$; $m$ denotes a positive integer and $1 \leq r \leq \infty$. We denote by $W_0^{m,r}(\mathcal{O})$ the closure with respect to the norm in $W^{m,r}(\mathcal{O})$ of the space of $C^{\infty}$ functions compactly supported in $\mathcal{O}$. 
We use uppercase bold letters to denote the vector-valued counterparts of the aforementioned spaces whereas lowercase bold letters are used to denote vector-valued functions.

Let us introduce some spaces utilized in the analysis of Darcy's problem:
\begin{equation*}
\mathbf{H}(\textnormal{div},\mathcal{O}):=\{ \mathbf{v}\in \mathbf{L}^2(\mathcal{O})  :  \text{div }\mathbf{v}\in L^2(\mathcal{O})\}
\end{equation*}
and $\mathbf{H}_0(\text{div},\mathcal{O}) := \left \{ \mathbf{v} \in \mathbf{H}(\textnormal{div},\mathcal{O}): \mathbf{v}  \cdot \mathbf{n}|_{\partial \mathcal{O}} = 0 \right \}$.  We equip both spaces, $\mathbf{H}(\textnormal{div},\mathcal{O})$ and $\mathbf{H}_0(\text{div},\mathcal{O})$, with the following norm:
\[
\|\mathbf{v}\|_{\mathbf{H}(\text{div},\mathcal{O})} := \left( \|\mathbf{v}\|_{\mathbf{L}^2(\mathcal{O})}^2+\|\text{div}~\mathbf{v}\|_{L^2(\mathcal{O})}^2 \right)^{\frac{1}{2}}.
\]
We also introduce $\mathbf{V}(\mathcal{O}) := \{  \mathbf{v} \in \mathbf{H}_0(\textnormal{div},\mathcal{O}): \text{div}~\mathbf{v} = 0  \}$. 

To perform an a posteriori error analysis, we will make use of the so-called curl operator. When $d=2$, we define, for $v\in H^1(\mathcal{O})$ and $ \mathbf{v}=(v_1,v_2)\in \mathbf{H}^1(\mathcal{O})$,
\begin{equation*}
\mathbf{curl} ~ v :=  \left(\frac{\partial v}{\partial x_2},-\frac{\partial v}{\partial x_1}\right), \qquad 
\mathbf{curl}~  \mathbf{v} :=  \frac{\partial v_2}{\partial x_1} -\frac{\partial v_1}{\partial x_2}.
\end{equation*}
With this operator at hand, we define $\mathbf{H}(\mathbf{curl},\Omega):= \left\{ \mathbf{v} \in L^2(\Omega)^2: \mathbf{curl}~  \mathbf{v}  \in L^2(\Omega) \right\}$.

If $\mathcal{W}$ and $\mathcal{Z}$ are Banach function spaces, we write $\mathcal{W} \hookrightarrow\mathcal{Z}$ to denote that $\mathcal{W}$ is continuously embedded in $\mathcal{Z}$. We denote by $\mathcal{W}'$ and $\|\cdot\|_{\mathcal{W}}$ the dual and the norm of $\mathcal{W}$, respectively. Given $p \in(1, \infty)$, we denote by $p'$ its H\"older conjugate, i.e., the real number such that $1/p + 1/p' = 1$. The relation $a\lesssim b$ indicates that $a\leq Cb$, with a constant $C$ that neither depends on $a$, $b$, nor the discretization parameters. The value of $C$ might change at each occurrence.  

We finally mention that, throughout this work $\Omega\subset \mathbb{R}^2$ is an open and bounded polygonal domain with Lipschitz boundary $\partial\Omega$.


\subsection{Darcy's equations}
We begin this section by recalling the fact that, on Lipschitz domains, the divergence operator is surjective from $\mathbf{H}_0(\text{div},\Omega)$ to $L_0^2(\Omega)$: there exists $\beta>0$ such that \cite[inequality (2.14)]{MR3802674}, \cite[inequality (2.13)]{MR4041519}
\begin{equation}\label{eq:infsup}
\inf_{\mathsf{q}\in L_0^2(\Omega)} \sup_{\mathbf{v}\in \mathbf{H}_0(\text{div},\Omega)}\dfrac{\int_\Omega \mathsf{q}\text{ div }\mathbf{v}\mathsf{d}\mathbf{x}}{\|\mathbf{v}\|_{\mathbf{H}(\text{div},\Omega)}\|\mathsf{q}\|_{L^2(\Omega)}}\geq \beta>0.
\end{equation}

We introduce the following weak formulation of standard Darcy's equations: Find $(\mathbf{u},\mathsf{p})\in \mathbf{H}_0(\textnormal{div},\Omega)\times L_0^2(\Omega)$ such that
\begin{equation}\label{eq:darcy_problem}
\begin{array}{rcll}
\displaystyle\int_\Omega(\eta \mathbf{u}\cdot \mathbf{v} - \mathsf{p}\textnormal{ div }\mathbf{v})\mathsf{d}\mathbf{x}& = & 	\displaystyle\int_\Omega\mathbf{f}\cdot \mathbf{v}\mathsf{d}\mathbf{x} \quad &\forall \mathbf{v}\in \mathbf{H}_0(\textnormal{div},\Omega),\\
\displaystyle\int_\Omega \mathsf{q}\textnormal{ div } \mathbf{u}\mathsf{d}\mathbf{x} & = & 0 \quad &\forall \mathsf{q}\in L_0^2(\Omega).
\end{array}
\end{equation}
Here, $\mathbf{f}\in \mathbf{L}^2(\Omega)$ and $\eta$ denotes a function in $L^\infty(\Omega)$ that satisfies
\begin{equation}\label{eq:viscosity}
\eta_{-}\leq \eta(\mathbf{x})\leq \eta_{+} \quad \text{for a.e.}~\mathbf{x}\in \Omega,
\qquad
\eta_{-},\eta_{+} > 0.
\end{equation}

The next result follows from the inf-sup theory for saddle point problems \cite[Theorem 2.34]{Guermond-Ern}.

\begin{theorem}[well-posedness of Darcy's equations]\label{thm:wp_darcy}
Let $\mathbf{f}\in \mathbf{L}^2(\Omega)$ and $\eta\in L^\infty(\Omega)$ be such that \eqref{eq:viscosity} holds. Then, problem \eqref{eq:darcy_problem} admits a unique solution $(\mathbf{u},\mathsf{p})\in \mathbf{H}_0(\textnormal{div},\Omega)\times L_0^2(\Omega)$. In addition, the following estimate holds:
\begin{equation*}
\|\mathbf{u}\|_{\mathbf{H}(\textnormal{div},\Omega)}+\|\mathsf{p}\|_{L^2(\Omega)}\lesssim \|\mathbf{f}\|_{\mathbf{L}^2(\Omega)},
\end{equation*}
where the hidden constant is independent of the data $\mathbf{f}$, $\eta$ and the solution $(\mathbf{u},\mathsf{p})$.
\end{theorem}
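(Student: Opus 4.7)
The plan is to recognize \eqref{eq:darcy_problem} as a standard mixed (saddle point) variational problem and to verify the hypotheses of the Brezzi--Babu\v{s}ka theory, i.e., the cited Theorem~2.34 of Guermond--Ern. To this end, I would introduce the bilinear forms
\begin{equation*}
a(\mathbf{u},\mathbf{v}):=\int_\Omega \eta\,\mathbf{u}\cdot \mathbf{v}\,\mathsf{d}\mathbf{x},
\qquad
b(\mathbf{v},\mathsf{q}):=-\int_\Omega \mathsf{q}\,\textnormal{div}\,\mathbf{v}\,\mathsf{d}\mathbf{x},
\end{equation*}
together with the linear form $F(\mathbf{v}):=\int_\Omega \mathbf{f}\cdot \mathbf{v}\,\mathsf{d}\mathbf{x}$, all defined on the natural product space $\mathbf{H}_0(\textnormal{div},\Omega)\times L_0^2(\Omega)$, so that \eqref{eq:darcy_problem} reads as the canonical saddle point system.

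Next I would check the four standard hypotheses. Continuity of $a$ is immediate from the upper bound in \eqref{eq:viscosity} together with $\|\mathbf{v}\|_{\mathbf{L}^2(\Omega)}\leq \|\mathbf{v}\|_{\mathbf{H}(\textnormal{div},\Omega)}$, which yields $|a(\mathbf{u},\mathbf{v})|\leq \eta_{+}\|\mathbf{u}\|_{\mathbf{H}(\textnormal{div},\Omega)}\|\mathbf{v}\|_{\mathbf{H}(\textnormal{div},\Omega)}$. Continuity of $b$ follows from $|b(\mathbf{v},\mathsf{q})|\leq \|\textnormal{div}\,\mathbf{v}\|_{L^2(\Omega)}\|\mathsf{q}\|_{L^2(\Omega)}\leq \|\mathbf{v}\|_{\mathbf{H}(\textnormal{div},\Omega)}\|\mathsf{q}\|_{L^2(\Omega)}$, and continuity of $F$ is a direct application of Cauchy--Schwarz. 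The inf-sup condition for $b$ is exactly the surjectivity bound \eqref{eq:infsup} quoted at the start of the subsection (the sign is irrelevant for the inf-sup estimate).

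The one remaining condition, and the only place where something must actually be said, is the coercivity of $a$ on the kernel $\mathbf{V}(\Omega)=\{\mathbf{v}\in \mathbf{H}_0(\textnormal{div},\Omega):\textnormal{div}\,\mathbf{v}=0\}$. For any $\mathbf{v}\in \mathbf{V}(\Omega)$, the lower bound in \eqref{eq:viscosity} gives
\begin{equation*}
a(\mathbf{v},\mathbf{v})\geq \eta_{-}\|\mathbf{v}\|_{\mathbf{L}^2(\Omega)}^{2}
=\eta_{-}\bigl(\|\mathbf{v}\|_{\mathbf{L}^2(\Omega)}^{2}+\|\textnormal{div}\,\mathbf{v}\|_{L^2(\Omega)}^{2}\bigr)
=\eta_{-}\|\mathbf{v}\|_{\mathbf{H}(\textnormal{div},\Omega)}^{2},
\end{equation*}
where the key observation is that the $\mathbf{L}^2$-norm and the $\mathbf{H}(\textnormal{div})$-norm coincide on $\mathbf{V}(\Omega)$, so the pointwise bound $\eta\geq\eta_{-}$ upgrades directly to coercivity in the graph norm. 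This is the only structural step, and it uses crucially the divergence-free constraint defining the kernel.

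With the four hypotheses in hand, Brezzi's theorem yields a unique pair $(\mathbf{u},\mathsf{p})\in \mathbf{H}_0(\textnormal{div},\Omega)\times L_0^2(\Omega)$ solving \eqref{eq:darcy_problem}, together with the a priori bound
\begin{equation*}
\|\mathbf{u}\|_{\mathbf{H}(\textnormal{div},\Omega)}+\|\mathsf{p}\|_{L^2(\Omega)}\leq C\|F\|_{\mathbf{H}_0(\textnormal{div},\Omega)'}\leq C\|\mathbf{f}\|_{\mathbf{L}^2(\Omega)},
\end{equation*}
with $C$ depending only on $\eta_{-}$, $\eta_{+}$, and the inf-sup constant $\beta$ from \eqref{eq:infsup}, and in particular independent of $\mathbf{f}$ and of the solution itself. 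I do not anticipate any substantive obstacle: the argument is essentially a bookkeeping exercise in applying Brezzi's theorem, with the only content-bearing step being the coercivity check on $\mathbf{V}(\Omega)$ described above.
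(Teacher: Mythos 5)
Your argument is correct and is exactly the route the paper takes: the paper's entire proof is a citation of the Babu\v{s}ka--Brezzi theory (Theorem~2.34 of Guermond--Ern), and you have simply spelled out the verification of its hypotheses, with the one substantive observation being that $\|\cdot\|_{\mathbf{L}^2(\Omega)}$ and $\|\cdot\|_{\mathbf{H}(\textnormal{div},\Omega)}$ coincide on the divergence-free kernel, which converts the pointwise bound $\eta\geq\eta_{-}$ into coercivity in the graph norm. Your honest accounting that the constant depends on $\eta_{-}$, $\eta_{+}$, and $\beta$ is, if anything, more precise than the theorem's claim of independence from $\eta$, which should be read as independence from the particular function $\eta$ once the bounds in \eqref{eq:viscosity} are fixed.
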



\section{The coupled problem}
\label{sec:coupled_problem} 
The main goal of this section is to show the existence of weak solutions for problem \eqref{eq:model}. As a first step, we introduce the set of assumptions under which we will operate and set a weak formulation.


\subsection{Main assumptions and weak formulation}\label{sec:main_assump}

We will operate under the following assumptions on the viscosity and diffusivity coefficients.

\begin{itemize}
\item \emph{Viscosity:} The viscosity $\nu$ is a function that is strictly positive and bounded, i.e., there exist positive constants $\nu_{-}$ and $\nu_{+}$ such that
\begin{equation}\label{eq:nu}
\nu_{-}\leq \nu(s)\leq \nu_{+} \quad \forall s\in\mathbb{R}.
\end{equation}
In addition, we assume that $\nu \in C^{0,1}(\mathbb{R})$ with Lipschitz constant $C_{\mathcal{L}}$, i.e.,
\begin{equation*}
|\nu(s_{1})-\nu(s_{2})| \leq C_{\mathcal{L}}|s_{1}-s_{2}| \quad \forall s_{1},s_{2}\in\mathbb{R}.
\end{equation*}
\item \emph{Diffusivity:} The thermal coefficient $\kappa$ is a strictly positive and bounded function, i.e., there exist positive constants $\kappa_{-}$ and $\kappa_{+}$ such that
\begin{equation}\label{eq:kappa}
\kappa_{-}\leq \kappa(s)\leq \kappa_{+} \quad \forall s\in\mathbb{R}.
\end{equation}
We also assume that $\kappa \in C^{0,1}(\mathbb{R})$.
\end{itemize}


\subsection{Weak solutions}
\label{sec:weak_solutions}

We adopt the following notion of weak solution.

\begin{definition}[weak solution]
Let $\mathbf{f}\in \mathbf{L}^2(\Omega)$, $z\in\Omega$, and $p<2$. We say that $(\mathbf{u},\mathsf{p},T)\in \mathbf{H}_0(\textnormal{div},\Omega)\times L_0^2(\Omega)\times W_0^{1,p}(\Omega)$ is a weak solution to \eqref{eq:model} if
\begin{equation}\label{eq:modelweak}
\left\{
\begin{array}{rcll}
\displaystyle\int_\Omega (\nu(T) \mathbf{u}\cdot \mathbf{v}-  \mathsf{p} \textnormal{ div }\mathbf{v})\mathsf{d}\mathbf{x} & = & 	\displaystyle\int_\Omega\mathbf{f}\cdot \mathbf{v}\mathsf{d}\mathbf{x} \quad &\forall \mathbf{v}\in \mathbf{H}_0(\textnormal{div},\Omega),\\
\displaystyle\int_\Omega \mathsf{q}~\textnormal{div} ~\mathbf{u}\mathsf{d}\mathbf{x} & = & 0 \quad &\forall \mathsf{q}\in L_0^2(\Omega),\\
\displaystyle\int_\Omega(\kappa(T)\nabla T\cdot \nabla S - T \mathbf{u}\cdot\nabla S)\mathsf{d}\mathbf{x} & = & \langle \delta_{z}, S\rangle \quad &\forall S\in W_0^{1,p'}(\Omega).
\end{array}
\right.
\end{equation}
Here, $\langle\cdot,\cdot\rangle$ denotes the duality pairing between $W_0^{1,p'}(\Omega)$ and  $W^{-1,p}(\Omega):=(W_0^{1,p'}(\Omega))'$.
\end{definition}

The following comments are now in order. The asymptotic behavior of solutions $\chi$ to second order elliptic problems with homogeneous Dirichlet boundary conditions and $\delta_z$ as a forcing term is dictated by $|\nabla \chi(\mathbf{x})| \approx |\mathbf{x} - z|^{-1}$ \cite[Theorem 3.3]{MR0223740}. On the basis of a simple computation, this asymptotic behavior motivates us to seek for a temperature distribution within the space $W_0^{1,p}(\Omega)$ for $p<2$. On the other hand, we notice that, owing to our assumptions on data and definition of weak solution, all terms in problem \eqref{eq:modelweak} are well-defined. In particular, in view of H\"older's inequality, we have the following bound for the convective term:
\begin{align}
\label{eq:termconv}
\left|\int_{\Omega}T\mathbf{u}\cdot\nabla S d\textbf{x}\right|
&\leq \|\mathbf{u}\|_{\mathbf{L}^2(\Omega)}\|T\|_{L^{\frac{2p}{2-p}}(\Omega)}\|\nabla S\|_{\mathbf{L}^{p'}(\Omega)} \\
&\leq C_{e}
\|\mathbf{u}\|_{\mathbf{L}^2(\Omega)}\| \nabla T\|_{\mathbf{L}^{p}(\Omega)}\| \nabla S\|_{\mathbf{L}^{p'}(\Omega)},\nonumber
\end{align}
where we have utilized the standard Sobolev embedding $W_0^{1,p}(\Omega)\hookrightarrow L^{\frac{2p}{2-p}}(\Omega)$ \cite[Theorem 4.12, Case \textbf{C}]{MR2424078}; $C_{e}$ denotes the best constant in such an embedding.

\subsection{A problem for the single variable $T$}

To analyze problem \eqref{eq:modelweak}, we follow the ideas in \cite[Section 2.2]{MR3802674} and observe that \eqref{eq:modelweak} can be rewritten as a problem for the single variable $T$. In fact, for a given temperature $T$, the first two equations in problem \eqref{eq:modelweak} correspond to a Darcy's problem that, in view of Theorem \ref{thm:wp_darcy}, admits a unique solution $(\mathbf{u},\mathsf{p})\in \mathbf{H}_0(\textnormal{div},\Omega)\times L_0^2(\Omega)$. We notice that the variables $\mathbf{u}$ and $\mathsf{p}$ can be seen as functions depending on $T$. This motivates the notation $(\mathbf{u},\mathsf{p})=(\mathbf{u}(T),\mathsf{p}(T))$. Problem \eqref{eq:modelweak} is thus equivalent to the following reduced formulation \cite[Section 2.2]{MR3802674}: Find $T\in W_0^{1,p}(\Omega)$, with $p<2$, such that
\begin{equation}\label{eq:heat_decoupled}
\int_\Omega(\kappa(T)\nabla T\cdot \nabla S - T \mathbf{u}(T)\cdot\nabla S)\mathsf{d}\mathbf{x}  =  \langle \delta_{z}, S\rangle \qquad \forall S\in W_0^{1,p'}(\Omega),
\end{equation}
where $\mathbf{u}(T) \in \mathbf{H}_0(\textnormal{div},\Omega)$ denotes the velocity component of the solution $(\mathbf{u}(T),\mathsf{p}(T))$ to the following problem: Find $(\mathbf{u}(T),\mathsf{p}(T)) \in \mathbf{H}_0(\textnormal{div},\Omega) \times L_0^2(\Omega)$ such that
\begin{equation}\label{eq:Darcy_decoupled}
\begin{array}{rcll}
\displaystyle\int_\Omega (\nu(T) \mathbf{u}(T)\cdot \mathbf{v}-  \mathsf{p}(T) \textnormal{ div }\mathbf{v})\mathsf{d}\mathbf{x} & = & 	\displaystyle\int_\Omega\mathbf{f}\cdot \mathbf{v}\mathsf{d}\mathbf{x} \quad &\forall \mathbf{v}\in \mathbf{H}_0(\textnormal{div},\Omega),\\
\displaystyle\int_\Omega \mathsf{q}~\textnormal{div} ~\mathbf{u}(T)\mathsf{d}\mathbf{x} & = & 0 \quad &\forall \mathsf{q}\in L_0^2(\Omega).
\end{array}
\end{equation}


\subsection{A stationary heat equation with convection} 
\label{subsection:heatequation} 
In this section, we study the existence and uniqueness of solutions for a stationary heat equation with convection and singular forcing. To accomplish this task, we begin our studies by introducing the function $\xi\in L^\infty(\Omega)$, which is such that
\begin{equation}\label{eq:diffusivity_xi}
\xi_{-}\leq \xi(\mathbf{x})\leq \xi_{+} \quad \text{for a.e. } \mathbf{x}\in \Omega,
\qquad \xi_{-},\xi_{+} > 0.
\end{equation}
In addition, we assume that $\xi$ is uniformly continuous. With this function at hand, we introduce the following weak version of the aforementioned stationary heat equation with convection: 
\begin{equation}\label{eq:heat}
T\in W_0^{1,p}(\Omega):
\quad
\displaystyle\int_\Omega\left(\xi\nabla T\cdot \nabla S - T\mathbf{v}\cdot\nabla S \right)\mathsf{d}\mathbf{x} =  \langle \delta_{z}, S\rangle \quad \forall S\in W_0^{1,p'}(\Omega).
\end{equation}
Here, $p$ is such that $4/3 - \epsilon < p < 2$, where $\epsilon >0$, $\mathbf{v}\in \mathbf{L}^2(\Omega)$, $z\in\Omega$, and $1/p+1/p'=1$.

We present the following well-posedness result.

\begin{proposition}[case $\mathbf{v}=\mathbf{0}$]\label{prop:u=0}
Let $\xi\in L^\infty(\Omega)$ be such that \eqref{eq:diffusivity_xi} holds. Assume, in addition, that $\xi$ is uniformly continuous. Then, problem \eqref{eq:heat} with $\mathbf{v}=\mathbf{0}$ is well-posed. This, in particular, implies that
\begin{equation}\label{eq:inf_sup_Poisson}
\| \nabla R \|_{\mathbf{L}^{p}(\Omega)}\leq C_{\xi}\sup_{S\in W_0^{1,p'}(\Omega)}\dfrac{\int_\Omega\xi\nabla R\cdot \nabla S \mathsf{d}\mathbf{x}}{\|\nabla S\|_{\mathbf{L}^{p'}(\Omega)}}\qquad \forall R\in W_0^{1,p}(\Omega),
\end{equation}
with a constant $C_{\xi}$ depending on $\xi$, $p$, and $\Omega$.
\end{proposition}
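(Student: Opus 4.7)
The plan is to establish the inf-sup inequality \eqref{eq:inf_sup_Poisson} together with its dual counterpart, and then conclude well-posedness via the Banach--Ne\v{c}as--Babu\v{s}ka theorem applied to the bilinear form $(R,S)\mapsto \int_\Omega \xi\nabla R\cdot\nabla S\,\mathsf{d}\mathbf{x}$ acting on $W_0^{1,p}(\Omega)\times W_0^{1,p'}(\Omega)$. As a preliminary remark, I would verify that $\delta_z$ is admissible as a right-hand side: since $d=2$ and $p'>2$, the embedding $W_0^{1,p'}(\Omega)\hookrightarrow C(\bar{\Omega})$ yields $|\langle\delta_z,S\rangle|=|S(z)|\lesssim \|\nabla S\|_{\mathbf{L}^{p'}(\Omega)}$, so $\delta_z\in W^{-1,p}(\Omega)$.

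The core of the argument is to study the operator $\mathcal{A}_\xi\colon W_0^{1,p}(\Omega)\to W^{-1,p}(\Omega)$ given by $\langle \mathcal{A}_\xi R,S\rangle:=\int_\Omega \xi\nabla R\cdot\nabla S\,\mathsf{d}\mathbf{x}$. Boundedness of $\mathcal{A}_\xi$ is immediate from \eqref{eq:diffusivity_xi} and H\"older's inequality, and injectivity follows from the energy-space case. The central claim to prove is that $\mathcal{A}_\xi$ is an isomorphism for every $p$ in an open interval around $2$ that contains $(4/3-\epsilon,2)$. The endpoint $p=2$ is standard Lax--Milgram, using coercivity from $\xi\geq \xi_{-}>0$. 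To upgrade to $p$ near but different from $2$, I would invoke a Meyers-type higher-integrability argument: since $\xi$ is uniformly continuous on $\bar\Omega$, a local freezing of coefficients expresses $\mathcal{A}_\xi$ as a small $W_0^{1,p}\to W^{-1,p}$ perturbation of a constant-coefficient operator $\xi_0(-\Delta)$, whose isomorphism range on the Lipschitz polygonal domain $\Omega$ is an explicit open interval around $2$. The threshold $4/3-\epsilon$ (equivalently $p'<4+\epsilon'$) reflects exactly this Meyers-type window.

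Once the isomorphism property of $\mathcal{A}_\xi$ is established, existence and uniqueness of $T\in W_0^{1,p}(\Omega)$ with $\mathcal{A}_\xi T=\delta_z$ follow, and \eqref{eq:inf_sup_Poisson} is simply the quantitative statement that $\mathcal{A}_\xi^{-1}$ is bounded:
\begin{equation*}
\|\nabla R\|_{\mathbf{L}^p(\Omega)}\lesssim \|\mathcal{A}_\xi R\|_{W^{-1,p}(\Omega)}=\sup_{S\in W_0^{1,p'}(\Omega)}\dfrac{\int_\Omega \xi\nabla R\cdot\nabla S\,\mathsf{d}\mathbf{x}}{\|\nabla S\|_{\mathbf{L}^{p'}(\Omega)}},
\end{equation*}
with a constant $C_\xi$ depending on $\xi_{-},\xi_{+}$, the modulus of continuity of $\xi$, the exponent $p$, and $\Omega$.

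The main obstacle is the step from $p=2$ to an open range of exponents: this is precisely where the uniform continuity of $\xi$, the two-dimensional Lipschitz polygonal geometry of $\Omega$, and the admissible Meyers exponent interact to determine the admissible interval of $p$'s, and thereby the final constant $C_\xi$. The remaining ingredients --- admissibility of $\delta_z$, boundedness of the bilinear form, dual inf-sup by self-adjointness of the scalar coefficient, and extraction of the solution --- are routine once the isomorphism statement for $\mathcal{A}_\xi$ is in place.
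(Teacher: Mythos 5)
Your overall strategy is sound, but it diverges from the paper in an important structural way: the paper does not prove the $W^{1,p}$ solvability at all. Its proof consists of two citations --- well-posedness of the pure diffusion problem for $4/3-\epsilon<p<2$ on a Lipschitz domain with bounded, uniformly continuous coefficient is taken from \cite[Theorem B]{MR2141694}, and the inf-sup condition \eqref{eq:inf_sup_Poisson} is then read off as the quantitative content of well-posedness via the Banach--Ne\v{c}as--Babu\v{s}ka theorem \cite[Theorem 2.6]{Guermond-Ern}. Your last step (inf-sup $=$ boundedness of $\mathcal{A}_\xi^{-1}$) coincides with the paper's second citation, and your identification of the window $4/3-\epsilon<p<4+\epsilon$ as the two-dimensional Lipschitz/Meyers range is exactly the mechanism behind the cited theorem. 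So you are, in effect, reconstructing the proof of the black-box result rather than invoking it; that is legitimate, but it means the burden of the hardest step falls on you.

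Two points in that reconstruction are genuinely underdeveloped. First, uniform continuity of $\xi$ gives only \emph{local} smallness of the oscillation; the operator $\mathcal{A}_\xi$ is not globally a small $W_0^{1,p}\to W^{-1,p}$ perturbation of a single constant-coefficient operator. The freezing argument must be globalized with a partition of unity, which produces lower-order commutator terms; one then concludes that $\mathcal{A}_\xi$ is Fredholm of index zero and must \emph{separately} establish injectivity to obtain invertibility. Second, and related, your claim that injectivity ``follows from the energy-space case'' is not immediate for $p<2$: an element of the kernel of $\mathcal{A}_\xi$ in $W_0^{1,p}(\Omega)$ with $p<2$ need not belong to $H_0^1(\Omega)$, so Lax--Milgram uniqueness does not apply to it. The standard repair is a duality argument --- test the kernel element against solutions of the (here self-adjoint) problem with smooth data, using solvability in $W_0^{1,p'}(\Omega)$ with $p'>2$ --- but that solvability is itself part of what is being proved, so the logical order of the $p<2$ and $p>2$ cases has to be arranged with care. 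If you intend the proof to be self-contained rather than a citation, these two steps are where the real work lies; as written they are asserted rather than proved.
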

\begin{proof}
The well-posedness of problem \eqref{eq:heat} on Lipschitz domains with $\mathbf{v}=\mathbf{0}$ for $p$ such that $4/3 - \epsilon < p < 2$ and $\epsilon >0$ follows, for instance, from \cite[Theorem B]{MR2141694}. Notice that $\xi$ is bounded and uniformly continuous.
With such a result at hand, the inf-sup condition \eqref{eq:inf_sup_Poisson} thus follows from \cite[Theorem 2.6]{Guermond-Ern}.
\end{proof}

We now analyze the case with nonzero convection.

\begin{proposition}[case $\mathbf{v}\neq\mathbf{0}$]\label{prop:wp_temp}
Let $\xi\in L^\infty(\Omega)$ be such that \eqref{eq:diffusivity_xi} holds. Assume, in addition, that $\xi$ is uniformly continuous. If
\begin{equation}\label{condicion}
C_\xi C_e\|\mathbf{v}\|_{\mathbf{L}^2(\Omega)} \leq \alpha< 1,	
\end{equation}
with $C_e$ being the best constant in the Sobolev embedding $W_0^{1,p}(\Omega)\hookrightarrow L^{\frac{2p}{2-p}}(\Omega)$, then problem \eqref{eq:heat} is well-posed. This, in particular, implies that the solution $T\in W_0^{1,p}(\Omega)$ of problem \eqref{eq:heat} satisfies
\begin{equation}\label{eq:estimatesheat}
\| \nabla T\|_{\mathbf{L}^{p}(\Omega)}\leq C_{\alpha} \|\delta_z\|_{W^{-1,p}(\Omega)}, \quad C_{\alpha}=\tfrac{C_\xi}{1-\alpha}, 
\qquad
4/3-\epsilon <p<2.
\end{equation}
\end{proposition}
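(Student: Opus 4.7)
The plan is to prove this by a Banach fixed-point argument that treats the convective term as a perturbation of the purely diffusive problem already handled by Proposition \ref{prop:u=0}.

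First, I would define an operator $\Phi: W_0^{1,p}(\Omega)\to W_0^{1,p}(\Omega)$ as follows. Given $R\in W_0^{1,p}(\Omega)$, let $\Phi(R):=T\in W_0^{1,p}(\Omega)$ be the unique solution (guaranteed by Proposition \ref{prop:u=0}) of
\begin{equation*}
\int_\Omega \xi\nabla T\cdot \nabla S\,\mathsf{d}\mathbf{x} = \int_\Omega R\,\mathbf{v}\cdot \nabla S\,\mathsf{d}\mathbf{x} + \langle\delta_z,S\rangle \qquad \forall S\in W_0^{1,p'}(\Omega).
\end{equation*}
The first step is to verify that the right-hand side is a bounded linear functional on $W_0^{1,p'}(\Omega)$. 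Using H\"older's inequality together with the Sobolev embedding $W_0^{1,p}(\Omega)\hookrightarrow L^{2p/(2-p)}(\Omega)$ (exactly as in \eqref{eq:termconv}), one obtains $|\int_\Omega R\,\mathbf{v}\cdot\nabla S\,\mathsf{d}\mathbf{x}| \leq C_e\|\mathbf{v}\|_{\mathbf{L}^2(\Omega)}\|\nabla R\|_{\mathbf{L}^p(\Omega)}\|\nabla S\|_{\mathbf{L}^{p'}(\Omega)}$, so the right-hand side defines an element of $W^{-1,p}(\Omega)$ with norm controlled by $C_e\|\mathbf{v}\|_{\mathbf{L}^2(\Omega)}\|\nabla R\|_{\mathbf{L}^p(\Omega)}+\|\delta_z\|_{W^{-1,p}(\Omega)}$. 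Hence $\Phi$ is well defined.

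Next I would show $\Phi$ is a contraction on $W_0^{1,p}(\Omega)$ equipped with $\|\nabla\cdot\|_{\mathbf{L}^p(\Omega)}$. Given $R_1,R_2$, the difference $T_1-T_2=\Phi(R_1)-\Phi(R_2)$ satisfies the pure-diffusion equation with right-hand side $S\mapsto \int_\Omega (R_1-R_2)\mathbf{v}\cdot\nabla S\,\mathsf{d}\mathbf{x}$. Applying the inf-sup estimate \eqref{eq:inf_sup_Poisson} from Proposition \ref{prop:u=0} and then the bound on the convective functional gives
\begin{equation*}
\|\nabla(T_1-T_2)\|_{\mathbf{L}^p(\Omega)} \leq C_\xi C_e\|\mathbf{v}\|_{\mathbf{L}^2(\Omega)}\|\nabla(R_1-R_2)\|_{\mathbf{L}^p(\Omega)} \leq \alpha\,\|\nabla(R_1-R_2)\|_{\mathbf{L}^p(\Omega)}.
\end{equation*}
Assumption \eqref{condicion} is precisely what delivers the contraction factor $\alpha<1$. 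Banach's fixed-point theorem then yields a unique $T\in W_0^{1,p}(\Omega)$ with $\Phi(T)=T$, which is by construction the unique solution of \eqref{eq:heat}.

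Finally, the a priori estimate \eqref{eq:estimatesheat} is obtained by applying \eqref{eq:inf_sup_Poisson} to the identity $\Phi(T)=T$: this gives $\|\nabla T\|_{\mathbf{L}^p(\Omega)} \leq C_\xi\bigl(C_e\|\mathbf{v}\|_{\mathbf{L}^2(\Omega)}\|\nabla T\|_{\mathbf{L}^p(\Omega)} + \|\delta_z\|_{W^{-1,p}(\Omega)}\bigr) \leq \alpha\|\nabla T\|_{\mathbf{L}^p(\Omega)} + C_\xi\|\delta_z\|_{W^{-1,p}(\Omega)}$, and absorbing the first term on the left yields $C_\alpha=C_\xi/(1-\alpha)$. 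The main subtlety to watch out for is ensuring the convective right-hand side is genuinely in the dual space $W^{-1,p}(\Omega)$ in the Sobolev range $4/3-\epsilon<p<2$, which is exactly the range for which the embedding $W_0^{1,p}(\Omega)\hookrightarrow L^{2p/(2-p)}(\Omega)$ is available in two dimensions; this is the only nontrivial input beyond Proposition \ref{prop:u=0}, and the rest is a routine perturbation argument.
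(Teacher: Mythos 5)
Your proof is correct and is essentially the paper's argument in different packaging: the paper writes \eqref{eq:heat} as the operator equation $(I+\mathcal{A}^{-1}\mathcal{B}_{\mathbf{v}})T=\mathcal{A}^{-1}\delta_{z}$ and invokes a Neumann-series/bounded-inverse theorem using $\|\mathcal{A}^{-1}\mathcal{B}_{\mathbf{v}}\|\leq C_{\xi}C_{e}\|\mathbf{v}\|_{\mathbf{L}^2(\Omega)}\leq\alpha<1$, which is exactly your contraction $\Phi$ viewed as an affine map with Lipschitz constant $\alpha$. The ingredients (Proposition \ref{prop:u=0}, the bound \eqref{eq:termconv} on the convective functional, and the absorption step yielding $C_{\alpha}=C_{\xi}/(1-\alpha)$) coincide with the paper's.
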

\begin{proof}
We begin the proof by introducing the map $\mathcal{A}:W_0^{1,p} (\Omega)\to W^{-1,p}(\Omega)$ by
\begin{equation*}
\langle \mathcal{A}T,S\rangle:=\int_\Omega \xi \nabla T\cdot \nabla S \mathsf{d}\mathbf{x},
\qquad
\forall T\in W_0^{1,p}(\Omega),
~\forall S \in W_0^{1,p'}(\Omega).
\end{equation*}
It is clear that $\mathcal{A}$ is linear and bounded. In addition, in view of the inf-sup condition \eqref{eq:inf_sup_Poisson}, we conclude that $\mathcal{A}$ is invertible and $\|\mathcal{A}^{-1}\|_{\mathcal{L}(W^{-1,p}(\Omega),W_0^{1,p}(\Omega))}\leq C_\xi$.

Let $\mathbf{v}\in \mathbf{L}^2(\Omega)$. Let us also introduce the map $\mathcal{B}_{\mathbf{v}}: W_0^{1,p}(\Omega)\to W^{-1,p}(\Omega)$ by
\begin{equation*}
\langle\mathcal{B}_{\mathbf{v}}T,S\rangle:=-\int_\Omega T\mathbf{v}\cdot \nabla S\mathsf{d}\textbf{x},\qquad \forall T\in  W_0^{1,p}(\Omega),~\forall S\in W_0^{1,p'}(\Omega).
\end{equation*}
The map $\mathcal{B}_{\mathbf{v}}$ is linear and, in view of \eqref{eq:termconv}, bounded. In fact, we have
\begin{equation*}
\displaystyle\|\mathcal{B}_{\mathbf{v}}\|_{\mathcal{L}(W_0^{1,p}(\Omega),W^{-1,p}(\Omega))}
= \sup_{T\in W_0^{1,p}(\Omega)} \dfrac{\| \mathcal{B}_{\mathbf{v}}T \|_{W^{-1,p}(\Omega)}}{\|\nabla T\|_{\mathbf{L}^{p}(\Omega)} }
\leq C_e\|\mathbf{v}\|_{\mathbf{L}^2(\Omega)}.
\end{equation*}

We thus employ the previously defined linear and bounded maps $\mathcal{A}$ and $\mathcal{B}_\mathbf{v}$ to rewrite problem \eqref{eq:heat} as the following operator equation in $W_0^{1,p}(\Omega)$: $(I+\mathcal{A}^{-1}\mathcal{B}_{\mathbf{v}})T=\mathcal{A}^{-1}\delta_{z}$. We now observe that the boundedness of the maps $\mathcal{A}^{-1}$ and $\mathcal{B}_\mathbf{v}$ combined with assumption \eqref{condicion} yield that the $\mathcal{L}(W_0^{1,p}(\Omega))$-norm of the map $\mathcal{A}^{-1}\mathcal{B}_{\mathbf{v}}$  is bounded by
$
C_{\xi}C_e \|\mathbf{v}\|_{\mathbf{L}^2(\Omega)}\leq \alpha <1.
$
This bound, in view of \cite[Theorem 1.B]{MR816732}, allows us to conclude that problem \eqref{eq:heat} admits a unique solution. The desired bound for $T$ can be obtained directly from the aforementioned operator equation. In fact, we have
\begin{equation*}
\| \nabla T\|_{\mathbf{L}^{p}(\Omega)}\leq \dfrac{\|\mathcal{A}^{-1}\|_{\mathcal{L}(W^{-1,p}(\Omega),W_0^{1,p}(\Omega))}}{1-\|\mathcal{A}^{-1}\mathcal{B}_{\mathbf{v}}\|_{\mathcal{L}(W_0^{1,p}(\Omega))}}\|\delta_z\|_{W^{-1,p}(\Omega)}\leq \dfrac{C_\xi}{1-\alpha}\|\delta_z\|_{W^{-1,p}(\Omega)}.
\end{equation*}
This concludes the proof.
\end{proof}


\subsection{The coupled problem}
We now proceed to analyze the existence of solutions for problem \eqref{eq:heat_decoupled}--\eqref{eq:Darcy_decoupled} on the basis of a fixed point argument. Let $p < 2$ and let $\mathcal{F}: W_0^{1,p}(\Omega)\to W_0^{1,p}(\Omega)$ be the map defined by $\mathcal{F}(\theta)=\zeta$, where $\zeta$ denotes the solution to the following problem: Find $\zeta \in W_0^{1,p}(\Omega)$ such that
\begin{equation}\label{eq:operator_F}
\int_\Omega(\kappa(\theta)\nabla \zeta\cdot \nabla S - \zeta \mathbf{u}(\theta)\cdot\nabla S)\mathsf{d}\mathbf{x}  =  \langle \delta_{z}, S\rangle \quad \forall S\in W_0^{1,p'}(\Omega).
\end{equation}
We recall that $1/p+1/p'=1$. Notice that the definition of $\zeta$ implies solving, for a prescribed temperature $\theta \in W_0^{1,p}(\Omega)$, a Darcy's problem with viscosity $\nu(\theta)$. Since $\nu$ satisfies the estimates in \eqref{eq:nu}, Theorem \ref{thm:wp_darcy} guarantees the existence of a unique solution $(\mathbf{u}(\theta),\mathsf{p}(\theta))$ for such a problem. Once this solution is obtained, the stationary heat equation \eqref{eq:operator_F}, with the nonzero convection $\mathbf{u}(\theta)$, is thus solved to obtain $\zeta\in W_0^{1,p}(\Omega)$.

As a first instrumental result, we prove that the mapping $\mathcal{F}$ is well-defined. To accomplish this task, we introduce the ball
\begin{equation*}
\mathfrak{B}_{T}:= \left\{ \theta\in W_0^{1,p}(\Omega)~:~ \|\nabla\theta\|_{\mathbf{L}^{p}(\Omega)}\leq C_{\frac{1}{2}}\|\delta_{z}\|_{W^{-1,p}(\Omega)} \right \},
\end{equation*}
where $C_{\frac{1}{2}}$ is as in \eqref{eq:estimatesheat}. Since it will be useful in the analysis that follows, we define
\begin{equation}\label{def:constant_C}
\mathfrak{C}:=(2C_{e}C_{\kappa})^{-1},
\end{equation}
where $C_e$ is defined as in \eqref{eq:termconv} and $C_{\kappa}$ corresponds to the constant involved in the inf-sup condition \eqref{eq:inf_sup_Poisson} when $\xi$ is replaced by $\kappa$.

\begin{lemma}[$\mathcal{F}$ is well-defined] 
\label{lemma:welldefined}
Let $p$ such that $4/3 - \epsilon < p < 2$ and $\epsilon >0$. Let $\mathbf{f}$ be such that  $\|\mathbf{f}\|_{\mathbf{L}^2(\Omega)}\leq \mathfrak{C} \nu_{-}$, where $\nu_{-}$ is as in \eqref{eq:nu}. Then, the map $\mathcal{F}$ is well-defined on $\mathfrak{B}_{T}$ and, in addition, $\mathcal{F}(\mathfrak{B}_{T})\subset \mathfrak{B}_{T}$.
\end{lemma}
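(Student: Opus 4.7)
The plan is to verify, for an arbitrary $\theta\in\mathfrak{B}_{T}$, that the two problems involved in the definition of $\mathcal{F}(\theta)$ each admit a unique solution, and that the resulting temperature $\zeta$ belongs back to $\mathfrak{B}_{T}$. The argument breaks into three clean steps, and the only piece that really uses the smallness of $\mathbf{f}$ is Step 2.

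\textbf{Step 1: Solve Darcy with data $\nu(\theta)$.} Because $\nu$ satisfies \eqref{eq:nu}, the coefficient $\nu(\theta)(\mathbf{x}):=\nu(\theta(\mathbf{x}))$ fulfills the hypothesis \eqref{eq:viscosity} with $\eta_{-}=\nu_{-}$ and $\eta_{+}=\nu_{+}$. Hence Theorem~\ref{thm:wp_darcy} yields a unique $(\mathbf{u}(\theta),\mathsf{p}(\theta))\in \mathbf{H}_0(\textnormal{div},\Omega)\times L_0^2(\Omega)$ solving \eqref{eq:Darcy_decoupled}.

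\textbf{Step 2: Energy estimate for the velocity.} I would test \eqref{eq:Darcy_decoupled} with $\mathbf{v}=\mathbf{u}(\theta)$. Since $\textnormal{div}\,\mathbf{u}(\theta)=0$, the pressure term drops out, and \eqref{eq:nu} together with the Cauchy--Schwarz inequality gives
\begin{equation*}
\nu_{-}\|\mathbf{u}(\theta)\|_{\mathbf{L}^{2}(\Omega)}^{2}
\le \int_{\Omega}\nu(\theta)\,|\mathbf{u}(\theta)|^{2}\mathsf{d}\mathbf{x}
= \int_{\Omega}\mathbf{f}\cdot\mathbf{u}(\theta)\mathsf{d}\mathbf{x}
\le \|\mathbf{f}\|_{\mathbf{L}^{2}(\Omega)}\|\mathbf{u}(\theta)\|_{\mathbf{L}^{2}(\Omega)}.
\end{equation*}
Combining the resulting bound $\|\mathbf{u}(\theta)\|_{\mathbf{L}^{2}(\Omega)}\le \nu_{-}^{-1}\|\mathbf{f}\|_{\mathbf{L}^{2}(\Omega)}$ with the hypothesis $\|\mathbf{f}\|_{\mathbf{L}^{2}(\Omega)}\le \mathfrak{C}\nu_{-}$ and the definition \eqref{def:constant_C} of $\mathfrak{C}$ yields
\begin{equation*}
C_{\kappa}C_{e}\|\mathbf{u}(\theta)\|_{\mathbf{L}^{2}(\Omega)}
\le C_{\kappa}C_{e}\,\mathfrak{C}
= \tfrac{1}{2}.
\end{equation*}
This is precisely the smallness condition \eqref{condicion} with $\alpha=\tfrac{1}{2}$.

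\textbf{Step 3: Solve the heat equation with convection $\mathbf{u}(\theta)$.} The coefficient $\kappa(\theta)$ belongs to $L^{\infty}(\Omega)$ with the bounds in \eqref{eq:diffusivity_xi} inherited from \eqref{eq:kappa}. With the smallness from Step 2, Proposition~\ref{prop:wp_temp} applied with $\xi=\kappa(\theta)$ and $\mathbf{v}=\mathbf{u}(\theta)$ produces a unique $\zeta=\mathcal{F}(\theta)\in W_{0}^{1,p}(\Omega)$ solving \eqref{eq:operator_F}, together with the estimate
\begin{equation*}
\|\nabla\zeta\|_{\mathbf{L}^{p}(\Omega)}
\le \frac{C_{\kappa}}{1-\tfrac{1}{2}}\|\delta_{z}\|_{W^{-1,p}(\Omega)}
= C_{\tfrac{1}{2}}\|\delta_{z}\|_{W^{-1,p}(\Omega)},
\end{equation*}
which is exactly the defining inequality of $\mathfrak{B}_{T}$. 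Hence $\mathcal{F}(\theta)\in\mathfrak{B}_{T}$.

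The main obstacle I foresee is verifying that $\kappa(\theta)$ meets the uniform continuity requirement of Proposition~\ref{prop:wp_temp} from only the Lipschitz character of $\kappa$ together with $\theta\in W_{0}^{1,p}(\Omega)$, $p<2$. All the quantitative content, however, sits in Step 2: testing Darcy with its own velocity kills the pressure because of incompressibility and reduces the matter to a single scalar inequality, which the choice $\mathfrak{C}=(2C_{e}C_{\kappa})^{-1}$ calibrates exactly so that $\alpha=\tfrac{1}{2}$ works, producing the constant $C_{1/2}$ used to define $\mathfrak{B}_{T}$.
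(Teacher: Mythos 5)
Your proposal is correct and follows essentially the same route as the paper: solve Darcy's problem via Theorem~\ref{thm:wp_darcy}, test with $\mathbf{v}=\mathbf{u}(\theta)$ to get $\|\mathbf{u}(\theta)\|_{\mathbf{L}^2(\Omega)}\le\nu_-^{-1}\|\mathbf{f}\|_{\mathbf{L}^2(\Omega)}\le\mathfrak{C}$, and then invoke Proposition~\ref{prop:wp_temp} with $\alpha=\tfrac12$ to land back in $\mathfrak{B}_T$. The uniform-continuity concern you flag for $\kappa(\theta)$ is a fair one, but the paper's own proof does not address it either, so your argument is on equal footing with the published one.
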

\begin{proof}
Let $\theta\in \mathfrak{B}_{T}$. Invoke Theorem \ref{thm:wp_darcy} to conclude the existence of a unique pair $(\mathbf{u}(\theta), \mathsf{p}(\theta)) \in \mathbf{H}_0(\text{div},\Omega) \times L_0^2(\Omega)$ solving \eqref{eq:Darcy_decoupled} with $T=\theta$. In addition, by testing $\mathbf{v} = \mathbf{u}(\theta)$ in the first equation of \eqref{eq:Darcy_decoupled}, we immediately obtain the estimate
\begin{equation*}
\|\mathbf{u}(\theta)\|_{\mathbf{L}^2(\Omega)}
\leq
\nu_{-}^{-1}
\|\mathbf{f}\|_{\mathbf{L}^2(\Omega)}
\leq
\mathfrak{C} = (2C_{e}C_{\kappa})^{-1},
\end{equation*}
upon utilizing definition \eqref{def:constant_C}. Consequently, $\mathbf{u}(\theta)$ satisfies the bound \eqref{condicion} with $\alpha= \frac12$. We are thus in position to apply the results of Proposition \ref{prop:wp_temp} to conclude the existence of a unique solution $\zeta\in W_0^{1,p}(\Omega)$ to \eqref{eq:operator_F} satisfying $\| \nabla \zeta \|_{\mathbf{L}^{p}(\Omega)}\leq C_{\frac{1}{2}} \|\delta_z\|_{W^{-1,p}(\Omega)}$. We have thus proved that $\zeta\in\mathfrak{B}_{T}$.
\end{proof}

We now proceed to obtain an existence result for problem \eqref{eq:heat_decoupled}--\eqref{eq:Darcy_decoupled} via a fixed point argument.

\begin{theorem}[existence]\label{thm:existence}
Under the assumptions of Lemma \ref{lemma:welldefined}, there exists a solution $(\mathbf{u},\mathsf{p},T) \in \mathbf{H}_0(\textnormal{div},\Omega)\times L_0^2(\Omega)\times W_0^{1,p}(\Omega)$ for problem \eqref{eq:modelweak}. In addition, we have that $T\in \mathfrak{B}_{T}$.
\end{theorem}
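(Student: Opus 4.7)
The plan is to apply Schauder's fixed-point theorem to the map $\mathcal{F}:\mathfrak{B}_T\to\mathfrak{B}_T$ introduced in \eqref{eq:operator_F}. Since Lemma \ref{lemma:welldefined} has already furnished the self-map property, I only need to endow $\mathfrak{B}_T$ with a topology in which it is convex and compact and in which $\mathcal{F}$ is continuous. The natural choice is to view $\mathfrak{B}_T$ as a subset of $L^s(\Omega)$ for some exponent $s$ with $1\leq s<2p/(2-p)$. The Rellich--Kondrachov compact embedding $W_0^{1,p}(\Omega)\hookrightarrow L^s(\Omega)$ (valid in two dimensions because $p<2$) then implies that $\mathfrak{B}_T$ is relatively compact in $L^s(\Omega)$; convexity is immediate, and closedness in $L^s(\Omega)$ follows from extracting a weakly $W_0^{1,p}$-convergent subsequence of any $L^s$-Cauchy sequence in $\mathfrak{B}_T$ and invoking the weak lower semicontinuity of the $W_0^{1,p}$-seminorm.

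\textbf{Continuity of $\mathcal{F}$.} Given $\theta_n\to\theta$ in $L^s(\Omega)$ with $\theta_n,\theta\in\mathfrak{B}_T$, I would first show that the associated Darcy velocities satisfy $\mathbf{u}(\theta_n)\to\mathbf{u}(\theta)$ in $\mathbf{L}^2(\Omega)$. Subtracting the two copies of \eqref{eq:Darcy_decoupled} and testing with $\mathbf{v}=\mathbf{u}(\theta_n)-\mathbf{u}(\theta)\in\mathbf{V}(\Omega)$ (so that the pressure contributions cancel) yields
\[
\nu_{-}\|\mathbf{u}(\theta_n)-\mathbf{u}(\theta)\|_{\mathbf{L}^2(\Omega)}\leq \|(\nu(\theta_n)-\nu(\theta))\mathbf{u}(\theta)\|_{\mathbf{L}^2(\Omega)},
\]
and the right-hand side tends to zero by dominated convergence, thanks to a.e.\ convergence (along a subsequence) of $\theta_n$ and the continuity and $L^\infty$-bound on $\nu$. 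Setting $\zeta_n:=\mathcal{F}(\theta_n)$, the uniform estimate $\|\nabla\zeta_n\|_{\mathbf{L}^p(\Omega)}\leq C_{\frac{1}{2}}\|\delta_z\|_{W^{-1,p}(\Omega)}$ of Lemma \ref{lemma:welldefined} provides a subsequence converging weakly in $W_0^{1,p}(\Omega)$ and strongly in $L^s(\Omega)$ to some $\zeta^{\star}\in\mathfrak{B}_T$; passing to the limit in \eqref{eq:operator_F} identifies $\zeta^{\star}$ as the unique solution of \eqref{eq:operator_F} associated with the data $(\kappa(\theta),\mathbf{u}(\theta))$ furnished by Proposition \ref{prop:wp_temp}, so $\zeta^{\star}=\mathcal{F}(\theta)$. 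A standard subsequence-principle argument then promotes convergence to the whole sequence.

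\textbf{Passage to the limit and main obstacle.} The convective term is mild: strong convergence $\zeta_n\to\zeta^{\star}$ in every $L^r(\Omega)$ with $r<2p/(2-p)$ combined with $\mathbf{u}(\theta_n)\to\mathbf{u}(\theta)$ in $\mathbf{L}^2(\Omega)$ yields, via H\"older's inequality, strong convergence of $\zeta_n\mathbf{u}(\theta_n)$ in $\mathbf{L}^t(\Omega)$ for some $t$ slightly below $p$; coupled with the uniform $\mathbf{L}^p$-bound this gives weak $\mathbf{L}^p$-convergence and hence convergence when tested against $\nabla S\in\mathbf{L}^{p'}(\Omega)$. The genuinely delicate step is the diffusive term $\int_\Omega(\kappa(\theta_n)-\kappa(\theta))\nabla\zeta_n\cdot\nabla S\,\mathsf{d}\mathbf{x}$: since $\nabla\zeta_n$ is controlled only in $\mathbf{L}^p(\Omega)$, the canonical pairing $\mathbf{L}^p\times\mathbf{L}^{p'}$ leaves no room to absorb the factor $\kappa(\theta_n)-\kappa(\theta)$. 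I would circumvent this by using a H\"older triple $(r,p,q)$ with $q<p'$ and $1/r+1/p+1/q=1$: then $\nabla S\in\mathbf{L}^{p'}(\Omega)\hookrightarrow\mathbf{L}^q(\Omega)$ (because $\Omega$ is bounded), $\nabla\zeta_n$ is bounded in $\mathbf{L}^p(\Omega)$, and $\kappa(\theta_n)\to\kappa(\theta)$ in $L^r(\Omega)$ for every finite $r$ (dominated convergence), forcing the integral to vanish. With continuity of $\mathcal{F}$ in place, Schauder's theorem delivers a fixed point $T\in\mathfrak{B}_T$, and $(\mathbf{u}(T),\mathsf{p}(T),T)$ is the sought weak solution of \eqref{eq:modelweak}.
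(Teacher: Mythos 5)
Your overall skeleton (Schauder applied to $\mathcal{F}$ on $\mathfrak{B}_T$, with the self-map property imported from Lemma \ref{lemma:welldefined}) matches the paper's, and your variant of realizing compactness — making $\mathfrak{B}_T$ itself compact in $L^s(\Omega)$ rather than proving, as the paper does, that $\mathcal{F}$ is completely continuous on $W_0^{1,p}(\Omega)$ — is legitimate. Your energy argument for $\nu_{-}\|\mathbf{u}(\theta_n)-\mathbf{u}(\theta)\|_{\mathbf{L}^2(\Omega)}\leq\|(\nu(\theta_n)-\nu(\theta))\mathbf{u}(\theta)\|_{\mathbf{L}^2(\Omega)}$ is correct and in fact more direct than the paper's route through $\sqrt{\nu}$ and an external lemma. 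However, the step you yourself flag as the delicate one fails as written. You ask for exponents $(r,p,q)$ with $q<p'$ and $1/r+1/p+1/q=1$. Since $q<p'$ forces $1/q>1/p'=1-1/p$, you get $1/p+1/q>1$ and hence $1/r<0$: no such H\"older triple exists. This is not a cosmetic slip — it reflects the real obstruction you identified, namely that $\nabla\zeta_n\cdot\nabla S$ lives exactly in $L^1(\Omega)$ with no slack, so a factor converging only in $L^r(\Omega)$ for finite $r$ cannot be absorbed.

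The gap is fixable without changing your architecture. Write
\begin{equation*}
\int_\Omega \kappa(\theta_n)\nabla \zeta_n\cdot\nabla S\,\mathsf{d}\mathbf{x}-\int_\Omega \kappa(\theta)\nabla \zeta^{\star}\cdot\nabla S\,\mathsf{d}\mathbf{x}
=\int_\Omega \nabla(\zeta_n-\zeta^{\star})\cdot\bigl(\kappa(\theta_n)\nabla S\bigr)\mathsf{d}\mathbf{x}
+\int_\Omega (\kappa(\theta_n)-\kappa(\theta))\nabla \zeta^{\star}\cdot\nabla S\,\mathsf{d}\mathbf{x}.
\end{equation*}
In the first term, $\nabla(\zeta_n-\zeta^{\star})\rightharpoonup \mathbf{0}$ in $\mathbf{L}^p(\Omega)$ while $\kappa(\theta_n)\nabla S\to\kappa(\theta)\nabla S$ strongly in $\mathbf{L}^{p'}(\Omega)$ by dominated convergence ($\kappa$ bounded and continuous, $\nabla S$ fixed), so the weak--strong pairing sends it to zero. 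In the second term the difference of coefficients multiplies the \emph{fixed} gradient $\nabla\zeta^{\star}\in\mathbf{L}^p(\Omega)$, so $(\kappa(\theta_n)-\kappa(\theta))\nabla\zeta^{\star}\to\mathbf{0}$ strongly in $\mathbf{L}^p(\Omega)$, again by dominated convergence. This is precisely the device the paper uses: it writes the equation satisfied by $e_{\zeta,n}=\zeta-\zeta_n$ so that the varying gradient appears only against the uniformly elliptic coefficient $\kappa(\theta_n)$, while the coefficient difference $\kappa(\theta_n)-\kappa(\theta)$ hits the fixed $\nabla\zeta$; the paper then upgrades to \emph{strong} $W_0^{1,p}(\Omega)$ convergence of $\zeta_n$ via the stability estimate of Proposition \ref{prop:wp_temp} applied to $e_{\zeta,n}$, which your weaker ($L^s$-continuity) framework does not even require. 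With the displayed decomposition substituted for the defective H\"older triple, your proof goes through.
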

\begin{proof}
We proceed on the basis of the Leray--Schauder fixed point theorem for the map $\mathcal{F}: \mathfrak{B}_{T} \rightarrow \mathfrak{B}_{T} $ \cite[Theorem 8.8]{MR787404}, \cite[Theorem 2.A]{MR816732}. In order to apply such a theorem, we first observe that, directly from its definition, the set $\mathfrak{B}_{T}$ is nonempty, closed, bounded, and convex. Additionally, Lemma \ref{lemma:welldefined} guarantees that $\mathcal{F}(\mathfrak{B}_T)\subset \mathfrak{B}_T$. It thus suffices to prove that  $\mathcal{F}$ is compact.

Let $\{\theta_{n}\}_{n\geq 0}\subset \mathfrak{B}_{T}$ be a sequence such that $\theta_{n}\rightharpoonup \theta\in W_0^{1,p}(\Omega)$ for $p<2$. Since $\mathfrak{B}_{T}$ is closed and convex, it immediately follows that $\mathfrak{B}_{T}$ is weakly closed. This implies that $\theta \in \mathfrak{B}_T$. Define $\zeta_{n}:=\mathcal{F}(\theta_n)$ and $\zeta:=\mathcal{F}(\theta)$. In what follows, we prove that $\zeta_{n}\to \zeta$ in $W_0^{1,p}(\Omega)$ as $k\uparrow\infty$. To accomplish this task, we invoke the problems that $\zeta$ and $\zeta_n$ satisfy and observe that the difference $e_{\zeta,n}:=\zeta-\zeta_n$ verifies
\begin{multline*}
\int_\Omega \left(\kappa(\theta_n)\nabla e_{\zeta,n}-e_{\zeta,n}\mathbf{u}(\theta)\right)\cdot \nabla S \mathsf{d}\mathbf{x}\\
=\int_\Omega \left[ \zeta_n (\mathbf{u}(\theta)-\mathbf{u}(\theta_{n}))+(\kappa(\theta_n)-\kappa(\theta))\nabla \zeta \right]\cdot \nabla S\mathsf{d}\mathbf{x}
:=\langle g_n, S\rangle,
\end{multline*}
for all $S\in W_0^{1,p'}(\Omega)$, i.e., $e_{\zeta,n}$ solves a heat equation with nonzero convection.  Notice that, since $\|\mathbf{u}(\theta)\|_{\mathbf{L}^2(\Omega)}\leq \mathfrak{C}$, we can invoke Proposition \ref{prop:wp_temp} to immediately arrive at
\begin{equation}\label{eq:bound_e_zeta_n}
\|\nabla e_{\zeta,n}\|_{\mathbf{L}^{p}(\Omega)}\leq C_{\frac{1}{2}} \|g_n\|_{W^{-1,p}(\Omega)}.
\end{equation}

Let us now study convergence properties of $\{g_{n}\}_{n\geq 0}$ as $n\uparrow \infty$. To accomplish this task, we analyze each term compromised in the definition of $g_{n}$ separately. We first invoke H\"older's inequality and the embedding $W_0^{1,p}(\Omega)\hookrightarrow L^{2p/(2-p)}(\Omega)$ to arrive at
\begin{align*}
\int_{\Omega}\zeta_n(\mathbf{u}(\theta)-\mathbf{u}(\theta_{n}))\cdot \nabla S\mathsf{d}\mathbf{x}
&\leq
\|\zeta_n\|_{L^{\frac{2p}{2-p}}(\Omega)}\|\mathbf{u}(\theta)-\mathbf{u}(\theta_{n})\|_{\mathbf{L}^2(\Omega)}\|\nabla S\|_{\mathbf{L}^{p'}(\Omega)}\hspace{-5cm} \\  \nonumber
&\leq C_{e}\|\nabla \zeta_n\|_{\mathbf{L}^{p}(\Omega)}\|\mathbf{u}(\theta)-\mathbf{u}(\theta_{n})\|_{\mathbf{L}^2(\Omega)}\|\nabla S\|_{\mathbf{L}^{p'}(\Omega)};
\end{align*}
$C_{e}$ being the best constant in the aforementioned embedding. Let us now estimate the term $\|\mathbf{u}(\theta)-\mathbf{u}(\theta_{n})\|_{\mathbf{L}^2(\Omega)}$ in the previous inequality. Invoke \eqref{eq:nu}, add and subtract the term $\sqrt{\nu(\theta)}\mathbf{u}(\theta)$, and utilize a triangle inequality to obtain
\begin{multline*}
\|\mathbf{u}(\theta)-\mathbf{u}(\theta_{n})\|_{\mathbf{L}^2(\Omega)}
\lesssim 
\| \sqrt{\nu(\theta_n)} ( \mathbf{u}(\theta)-\mathbf{u}(\theta_{n}) )\|_{\mathbf{L}^2(\Omega)}
\\
\lesssim
\|\nu(\theta)^{\frac{1}{2}}\mathbf{u}(\theta)-\nu(\theta_n)^{\frac{1}{2}}\mathbf{u}(\theta_{n})\|_{\mathbf{L}^2(\Omega)}
+
\|(\nu(\theta_n)^{\frac{1}{2}}-\nu(\theta)^{\frac{1}{2}})\mathbf{u}(\theta)\|_{\mathbf{L}^2(\Omega)}
=\mathrm{I}+\mathrm{II}.
\end{multline*}
Since $\theta_n\rightharpoonup \theta$ in $W_0^{1,p}(\Omega)$, we invoke the compact embedding $W_0^{1,p}(\Omega)\hookrightarrow L^2(\Omega)$ \cite[Theorem 6.3, Part \textbf{I}]{MR2424078} to obtain the strong convergence $\theta_n\to \theta$ in $L^2(\Omega)$ as $n \uparrow \infty$. An application of \cite[Lemma 2.1]{MR3802674} thus reveals that $\mathrm{I}\to 0$ as $n\uparrow \infty$. On the other hand, since $\nu$ is continuous and uniformly bounded, the strong convergence $\theta_n\to \theta$ in $L^2(\Omega)$ guarantees that $\nu(\theta_n) \to \nu(\theta)$ in $L^2(\Omega)$ \cite[Theorem 7]{MR120342}. Invoke the boundedness of $\nu$, the fact that $\mathbf{u}(\theta) \in \mathbf{L}^2(\Omega)$, and the Lebesgue dominated convergence to conclude that $\mathrm{II}\to 0$ as $n\uparrow \infty$. To control the remaining term in $g_{n}$ we proceed with similar arguments upon noticing that $\kappa$ is continuous and uniformly bounded, which imply that $(\kappa(\theta_n)-\kappa(\theta))\nabla \zeta \to \mathbf{0}$ in $\mathbf{L}^p(\Omega)$. Therefore, in view of \eqref{eq:bound_e_zeta_n}, $\zeta_n\to \zeta$ in $W_0^{1,p}(\Omega)$ as $n\uparrow \infty$. We have thus proved that the weak convergence $\theta_n\rightharpoonup \theta$ in $W_0^{1,p}(\Omega)$ implies the strong one $\zeta_n\to \zeta$ in $W_0^{1,p}(\Omega)$ as $n\uparrow \infty$. This shows that $\mathcal{F}$ is compact and concludes the proof.
\end{proof}


\section{Finite element approximation}
\label{sec:fem}

In this section, we describe and analyze a finite element solution technique to approximate solutions to problem \eqref{eq:modelweak}. We begin our analysis by introducing some terminology and a few basic ingredients \cite{MR2373954,CiarletBook,Guermond-Ern}. We denote by $\mathscr{T}_h = \{ K\}$ a conforming partition, or mesh, of $\bar{\Omega}$ into closed simplices $K$ with size $h_K = \text{diam}(K)$. Define $h:=\max_{ K \in \mathscr{T}_h} h_K$. We denote by $\mathbb{T} = \{\mathscr{T}_h \}_{h>0}$ a collection of conforming and shape regular meshes $\mathscr{T}_h$.  We define $\mathscr{S}$ as the set of internal one-dimensional interelement boundaries $\gamma$ of $\T_h$. For $K \in \T_h$, let $\mathscr{S}^{}_K$ denote the subset of $\mathscr{S}$ that contains the sides in $\mathscr{S}$ which are sides of $K$. We denote by $\mathcal{N}_{\gamma}$, for $\gamma \in \mathscr{S}$, the subset of $\T_h$ that contains the two elements that have $\gamma$ as a side. In addition, we define \emph{stars} or \emph{patches} associated with an element $K \in \T_h$ as:
\begin{equation}
\label{eq:patch}
\mathcal{N}_K= \cup \{K' \in \T_h: \mathscr{S}_K \cap \mathscr{S}_{K'} \neq \emptyset \},
\quad 
\mathcal{N}_K^*= \cup \{K' \in \T_h: K \cap {K'} \neq \emptyset \}.
\end{equation}
In an abuse of notation, below we denote by $\mathcal{N}_K$ and $\mathcal{N}_K^*$ either the sets themselves or the union of its elements.

Given a mesh $\mathscr{T}_{h} \in \mathbb{T}$, we define the finite element space of continuous piecewise polynomials of degree one:
\begin{equation*}
V_{h}:=\{S_{h}\in C(\bar{\Omega}): S_{h}|_K\in \mathbb{P}_{1}(K) \ \forall K\in \T_{h}\}\cap W_0^{1,p\prime}(\Omega),
\end{equation*}
where $p'>2$. Notice that, for each $h>0$, $V_{h}\subset W_0^{1,p\prime}(\Omega)\subset W_0^{1,p}(\Omega)$. 

We denote by $I_h$ the \emph{Lagrange interpolation operator} and immediately notice that, since $W_0^{1,p\prime}(\Omega)\hookrightarrow C(\bar{\Omega})$, $I_h$ is well-defined as a map from $W_0^{1,p\prime}(\Omega)$ into $V_h$ \cite[Example 1.106]{Guermond-Ern}. The following error estimate can be found in \cite[Theorem 1.103]{Guermond-Ern}: for each $K\in {\T}_h$,
\begin{equation}\label{eq:interp_Lagrange}
\|S-I_h S\|_{L^{p\prime}(K)}
\lesssim
h_K\|\nabla S\|_{\mathbf{L}^{p\prime}(K)} \quad \forall S\in W_0^{1,p\prime}(K).
\end{equation}
With this estimate at hand, a trace identity yields, for $\gamma \in \mathscr{S}$, the estimate
\begin{equation}\label{eq:interp_Lagrange_side}
 \|S-I_h S\|_{L^{p\prime}(\gamma)}
\lesssim
h_{\gamma}^\frac{1}{p}\|\nabla S\|_{\mathbf{L}^{p\prime}(\mathcal{N}_{\gamma})} \quad \forall S\in W_0^{1,p\prime}(\mathcal{N}_{\gamma}).
\end{equation}

To approximate the pair velocity--pressure that solves problem \eqref{eq:Darcy_decoupled}, we consider the Raviart--Thomas finite element space of order zero ($RT_{0}$):
\begin{align*}
\mathbf{X}_{h} &:= \mathbf{W}_{h} \cap \mathbf{H}_{0}(\text{div},\Omega),\\
Q_{h}&:=\{\mathsf{q}_{h}\in L^2(\Omega): \mathsf{q}_{h}|_{K}^{} \in \mathbb{P}_{0}(K) \ \forall K\in \T_{h} \}\cap L_0^2(\Omega),
\end{align*}
where $\mathbf{W}_{h}:=\left\{ \mathbf{v}_{h}\in \mathbf{H}(\text{div},\Omega): \mathbf{v}_{h}|_{K}^{}=a_{K}\mathbf{x}+\mathbf{b}_{K}, \ a_{K}\in\mathbb{R}, \ \mathbf{b}_{K}\in\mathbb{R}^{2},\ \forall K\in\T_{h} \right \}$. The spaces $\mathbf{X}_{h}$ and $Q_{h}$ satisfy the following discrete inf-sup condition \cite[Theorem 13.2]{MR1115239}: there exists $\tilde{\beta}>0$, independent of the discretization parameter $h$, such that
\begin{equation}\label{eq:discrete_inf_sup}
\inf_{\mathsf{q}_h\in Q_{h}} \sup_{\mathbf{v}_h\in\mathbf{X}_{h}}\dfrac{\int_\Omega \mathsf{q}_h \text{ div }\mathbf{v}_h\mathsf{d}\mathbf{x}}{\|\mathbf{v}_h\|_{\mathbf{H}(\text{div},\Omega)}\|\mathsf{q}_h\|_{L^2(\Omega)}}\geq \tilde{\beta}>0.
\end{equation}

Let us introduce the interpolation operator $\mathbf{P}_h: \mathbf{H}^1(\Omega)\cap \mathbf{H}_{0}(\text{div},\Omega)\to \mathbf{X}_{h}$, which satisfies, for each $K\in {\T}_h$, the following error estimates \cite[Theorem 6.3]{MR1115239}:
\begin{align}
\label{eq:interp_RT_i}
\|\mathbf{v}-\mathbf{P}_h\mathbf{v}\|_{\mathbf{L}^2(K)}
& \lesssim
h\|\nabla \mathbf{v}\|_{\mathbf{L}^2(K)} \quad \forall \mathbf{v}\in \mathbf{H}^1(\Omega),
\\
\label{eq:interp_RT_ii}
\|\text{div}(\mathbf{v}-\mathbf{P}_h\mathbf{v})\|_{\mathbf{L}^2(K)}
& \lesssim
h\|\nabla \text{div }\mathbf{v}\|_{\mathbf{L}^2(K)} \quad \forall \mathbf{v}\in \mathbf{H}^1(\Omega):~\text{div }\mathbf{v}\in H^1(\Omega).
\end{align}
We also have the local error estimate \cite[inequality (4.27)]{MR4041519}:
\begin{equation}
 \label{eq:interp_RT_iii}
 \|\mathbf{v}-\mathbf{P}_h\mathbf{v}\|_{\mathbf{L}^2(\gamma)}
\lesssim
h_{\gamma}^{\frac{1}{2}}\|\nabla \mathbf{v}\|_{\mathbf{L}^2(\mathcal{N}_{\gamma})} \quad \forall \mathbf{v}\in \mathbf{H}^1(\Omega).
\end{equation}

In addition, we observe that, for every $\mathbf{v} \in \mathbf{L}^2(\Omega)$ and $\mathbf{w} \in \mathbf{H}(\text{div},\Omega)$, the following density results holds:
\begin{equation}
\label{eq:density_result}
\lim_{h \rightarrow 0} \left( \inf_{\mathbf{v}_h \in \mathbf{X}_h} \| \mathbf{v} - \mathbf{v}_h\|_{\mathbf{L}^2(\Omega)} \right)  = 0,
\quad
\lim_{h \rightarrow 0} \left( \inf_{\mathbf{w}_h \in \mathbf{X}_h} \| \text{div }(\mathbf{w} -\mathbf{w}_h)\|_{\mathbf{L}^2(\Omega)} \right)  = 0.\hspace{-0.3cm}
\end{equation}

Having described our finite element setting, we introduce the following discrete approximation of problem \eqref{eq:modelweak}: Find $(\mathbf{u}_h, \mathsf{p}_h, T_h)\in \mathbf{X}_{h}\times Q_h\times V_h$ such that
\begin{equation}\label{eq:model_discrete}
\left\{
\begin{array}{rcll}
\displaystyle\int_\Omega (\nu(T_h) \mathbf{u}_h\cdot \mathbf{v}_h-  \mathsf{p}_h \textnormal{ div }\mathbf{v}_h)\mathsf{d}\mathbf{x}& = & 	\displaystyle\int_\Omega\mathbf{f}\cdot \mathbf{v}_h\mathsf{d}\mathbf{x} \quad &\forall \mathbf{v}_h \in \mathbf{X}_h,\\
\displaystyle\int_\Omega \mathsf{q}_h\textnormal{ div }\mathbf{u}_h\mathsf{d}\mathbf{x} & = & 0 \quad &\forall \mathsf{q}_h\in Q_h,\\
\displaystyle\int_\Omega(\kappa(T_h)\nabla T_h\cdot \nabla S_h - T_h \mathbf{u}_h\cdot\nabla S_h)\mathsf{d}\mathbf{x} & = & \langle \delta_{z}, S_h\rangle \quad &\forall S_h\in V_h.
\end{array}
\right.
\end{equation}

The main goal of this section is to show that, under similar assumptions to those in Theorem \ref{thm:existence}, problem \eqref{eq:model_discrete} always has a solution for every $h>0$. We also show that, as $h \rightarrow 0$, the sequence of solutions $(\mathbf{u}_h, \mathsf{p}_h, T_h)$ weakly converge, up to subsequences, to a solution of the coupled system \eqref{eq:modelweak}.


\subsection{A discrete heat equation}
In this section, we prove a discrete counterpart of Proposition \ref{prop:wp_temp}. To accomplish this task, we first provide a discrete inf-sup condition which directly stems from \cite[Proposition 8.6.2]{MR2373954}.

\begin{proposition}[discrete stability]\label{prop:disc_stab}
Let $\xi\in L^\infty(\Omega)$ be such that \eqref{eq:diffusivity_xi} holds. Then, there exist $h_{\star}>0$ and $\epsilon >0$ such that for all $0< h \leq h_{\star}$ and $R_h \in V_h$, we have
\begin{equation}\label{eq:discrete_inf_sup_Poisson}
\|\nabla R_{h} \|_{\mathbf{L}^{p}(\Omega)}\leq \tilde{C}_{\xi}\sup_{S_{h}\in  V_h}\dfrac{\int_\Omega \xi \nabla R_{h} \cdot \nabla S_{h} \mathsf{d}\mathbf{x}}{\|\nabla S_{h}\|_{\mathbf{L}^{p'}(\Omega)}},
\end{equation}
whenever $2 - \epsilon \leq p < 2$. Here, $\tilde{C}_{\xi}$ is a positive constant that is independent of $h$.
\end{proposition}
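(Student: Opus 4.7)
The plan is to derive the discrete inf-sup condition \eqref{eq:discrete_inf_sup_Poisson} from its continuous counterpart \eqref{eq:inf_sup_Poisson} through a Scott--Zhang quasi-interpolation together with a duality argument, as in \cite[Proposition 8.6.2]{MR2373954}. Fix an arbitrary $R_h \in V_h$. By Proposition \ref{prop:u=0}, we may pick $S \in W_0^{1,p'}(\Omega)$ with $\|\nabla S\|_{\mathbf{L}^{p'}(\Omega)} = 1$ such that
\[
\|\nabla R_h\|_{\mathbf{L}^p(\Omega)} \leq 2 C_\xi \int_\Omega \xi \nabla R_h \cdot \nabla S\, \mathsf{d}\mathbf{x}.
\]
The proof will then proceed by replacing $S$ by a suitable discrete object in $V_h$, at the price of a residual that must be absorbed.

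Concretely, I would introduce a Scott--Zhang quasi-interpolant $\Pi_h \colon W_0^{1,p'}(\Omega)\to V_h$, which is locally $W^{1,p'}$-stable, $\|\nabla \Pi_h S\|_{\mathbf{L}^{p'}(\Omega)} \lesssim \|\nabla S\|_{\mathbf{L}^{p'}(\Omega)}$, and enjoys elementwise approximation estimates analogous to \eqref{eq:interp_Lagrange}--\eqref{eq:interp_Lagrange_side}. Decomposing $S = \Pi_h S + (S - \Pi_h S)$, the first contribution is immediately bounded by the discrete supremum on the right-hand side of \eqref{eq:discrete_inf_sup_Poisson}, multiplied by the $W^{1,p'}$-stability constant of $\Pi_h$. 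What remains is to control the residual
\[
\mathcal{R}_h := \int_\Omega \xi \nabla R_h \cdot \nabla(S - \Pi_h S)\, \mathsf{d}\mathbf{x}.
\]

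To handle $\mathcal{R}_h$ I would appeal to a duality argument exploiting the Jerison--Kenig-type $W^{1,r}$-shift ($r > 2$) available for the adjoint divergence-form problem with uniformly continuous coefficient $\xi$ on a Lipschitz polygon (see \cite[Theorem B]{MR2141694} and its adjoint version). Testing $\mathcal{R}_h$ against the associated dual solution and using the enhanced regularity in combination with the approximation properties of $\Pi_h$, one obtains a bound of the form $|\mathcal{R}_h| \lesssim h^{\alpha}\|\nabla R_h\|_{\mathbf{L}^p(\Omega)}$ for some $\alpha = \alpha(p) > 0$ that stays uniformly positive on the range $2-\epsilon \leq p < 2$. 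For $h \leq h_\star$ small enough the residual term can then be absorbed on the left-hand side, yielding \eqref{eq:discrete_inf_sup_Poisson} with $\tilde{C}_\xi$ independent of $h$.

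The main obstacle is precisely this duality step: it requires a $W^{1,r}$-regularity shift for the adjoint problem with a constant that is robust as $p\uparrow 2$, and a careful bookkeeping to ensure that the $h^\alpha$-gain survives passage to the supremum over $V_h$. The condition $2-\epsilon \leq p < 2$ in the hypothesis is tailored to guarantee the availability of such a shift on the Lipschitz polygon $\Omega$ while keeping all constants independent of the discretization parameter.
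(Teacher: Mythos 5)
The paper offers no proof of this proposition: it is stated as following \emph{directly} from \cite[Proposition 8.6.2]{MR2373954}, whose argument is a perturbation of the coercive case $p=2$. Your proposal attempts an actual proof, but its central step fails. After writing $S=\Pi_h S+(S-\Pi_h S)$ with a Scott--Zhang operator, you must control
\[
\mathcal{R}_h=\int_\Omega \xi\,\nabla R_h\cdot\nabla(S-\Pi_h S)\,\mathsf{d}\mathbf{x},
\]
and you claim $|\mathcal{R}_h|\lesssim h^{\alpha}\|\nabla R_h\|_{\mathbf{L}^p(\Omega)}$ via duality and a $W^{1,r}$-shift. This cannot work: the near-maximizer $S$ is merely an element of $W_0^{1,p'}(\Omega)$ with $\|\nabla S\|_{\mathbf{L}^{p'}(\Omega)}=1$ (it solves the adjoint problem with a datum that lies only in $W^{-1,p'}(\Omega)$, so no shift theorem upgrades its regularity), and the Scott--Zhang interpolant has \emph{no} approximation rate in the $W^{1,p'}$-seminorm for functions that are only $W^{1,p'}$; generically $\|\nabla(S-\Pi_h S)\|_{\mathbf{L}^{p'}(\Omega)}\approx\|\nabla S\|_{\mathbf{L}^{p'}(\Omega)}$, with no positive power of $h$. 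Trying instead to exploit $\|S-\Pi_h S\|_{L^{p'}(K)}\lesssim h_K$ by elementwise integration by parts produces interelement jumps $\llbracket \xi\,\partial_{\mathbf{n}}R_h\rrbracket$ whose inverse estimates exactly cancel the gain ($h^{1/p}\cdot h^{-1/p}=1$). Hence the residual cannot be absorbed for small $h$, and the conclusion does not follow from your argument.

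The route that does work --- and the one behind the cited reference --- avoids the residual altogether: replace $\Pi_h S$ by the Galerkin projection $S_h\in V_h$ of $S$ with respect to the adjoint form, i.e. $\int_\Omega\xi\,\nabla v_h\cdot\nabla(S-S_h)\,\mathsf{d}\mathbf{x}=0$ for all $v_h\in V_h$. Since $R_h\in V_h$, orthogonality gives $\mathcal{R}_h=0$ identically, so
\[
\int_\Omega\xi\,\nabla R_h\cdot\nabla S\,\mathsf{d}\mathbf{x}=\int_\Omega\xi\,\nabla R_h\cdot\nabla S_h\,\mathsf{d}\mathbf{x},
\]
and the entire difficulty is transferred to the $W^{1,p'}(\Omega)$-stability of this projection uniformly in $h$. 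That stability, for $p'$ in a neighborhood of $2$ (which is why the statement is restricted to $2-\epsilon\leq p<2$ and to $h\leq h_\star$), is precisely the content of \cite[Proposition 8.6.2]{MR2373954}, proved there by perturbing the trivially stable case $p'=2$ rather than by duality. If you want a self-contained argument, that stability estimate is the lemma you must establish or quote; the Scott--Zhang decomposition plus a duality step is not a viable substitute.
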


Let $\xi\in L^{\infty}(\Omega)$ be such that \eqref{eq:diffusivity_xi} holds and let $\mathbf{v}\in \mathbf{L}^2(\Omega)$. We introduce the following discrete version of problem \eqref{eq:heat}: Find $T_{h}\in V_{h}$ such that
\begin{equation}\label{eq:discrete_heat}
\displaystyle\int_\Omega \left(\xi\nabla T_{h}\cdot \nabla S_{h} - T_{h}\mathbf{v}\cdot\nabla S_{h}\right)\mathsf{d}\mathbf{x} =  \langle \delta_{z}, S_{h}\rangle \qquad \forall S_{h}\in V_h.
\end{equation}

With the result of Proposition \eqref{prop:disc_stab} at hand, in the next result we show that, under a suitable smallness assumption on the convective term, problem \eqref{eq:discrete_heat} always has a discrete solution. In addition, we show that discrete solutions are uniformly bounded with respect to the discretization parameter $h$.

\begin{proposition}[well--posedness]\label{prop:disc_wp_temp}
Let $\xi\in L^\infty(\Omega)$ be such that \eqref{eq:diffusivity_xi} holds. There exist positive constants $h_{\star}$ and $\epsilon$ such that, if
\begin{equation}\label{eq:disc_condicion}
\tilde{C}_\xi C_e\|\mathbf{v}\|_{\mathbf{L}^2(\Omega)} \leq \alpha< 1,
\end{equation}
then the discrete problem \eqref{eq:discrete_heat} is well-posed for all $0< h \leq h_{\star}$ whenever $2 - \epsilon \leq p < 2$. In particular, we have that the solution $T_{h}\in V_{h}$ of problem \eqref{eq:discrete_heat} satisfies the estimate
\begin{equation}\label{eq:disc_estimatesheat}
\| \nabla T_{h}\|_{\mathbf{L}^{p}(\Omega)}\leq \tilde{C}_{\alpha} \|\delta_z\|_{W^{-1,p}(\Omega)}, \qquad \tilde{C}_{\alpha}=\tfrac{\tilde{C}_\xi}{1-\alpha},
\qquad
p \in [2-\epsilon,2).
\end{equation}
\end{proposition}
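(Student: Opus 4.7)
The plan is to mirror, at the discrete level, the operator-equation argument used in the proof of Proposition \ref{prop:wp_temp}, replacing the continuous inf--sup condition \eqref{eq:inf_sup_Poisson} by its discrete counterpart \eqref{eq:discrete_inf_sup_Poisson} from Proposition \ref{prop:disc_stab}. Since $V_h$ is finite-dimensional, existence, uniqueness, and the a priori estimate all follow at once from invertibility of a certain linear operator on $V_h$; the key point is that the operator norms and their bounds must be independent of $h$.

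First, I would fix $h_\star$ and $\epsilon$ so that Proposition \ref{prop:disc_stab} applies, and define the linear maps $\mathcal{A}_h: V_h \to V_h^{\prime}$ and $\mathcal{B}_{h,\mathbf{v}}: V_h \to V_h^{\prime}$ by
\begin{equation*}
\langle \mathcal{A}_h R_h, S_h\rangle := \int_\Omega \xi \nabla R_h \cdot \nabla S_h\,\mathsf{d}\mathbf{x},
\qquad
\langle \mathcal{B}_{h,\mathbf{v}} R_h, S_h\rangle := -\int_\Omega R_h\,\mathbf{v}\cdot\nabla S_h\,\mathsf{d}\mathbf{x},
\end{equation*}
for all $R_h, S_h \in V_h$, where $V_h^{\prime}$ is understood as the dual of $V_h$ when the latter is endowed with the $W_0^{1,p'}(\Omega)$-norm. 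The discrete inf--sup condition \eqref{eq:discrete_inf_sup_Poisson} then says precisely that $\mathcal{A}_h$ is invertible with $\|\mathcal{A}_h^{-1}\|_{\mathcal{L}(V_h^{\prime},V_h)} \leq \tilde C_\xi$, \emph{uniformly} in $h \leq h_\star$ and in $p \in [2-\epsilon,2)$. Since $V_h \subset W_0^{1,p'}(\Omega) \subset W_0^{1,p}(\Omega)$, the same H\"older-plus-Sobolev computation used in \eqref{eq:termconv} applied to discrete functions yields
\begin{equation*}
\|\mathcal{B}_{h,\mathbf{v}}\|_{\mathcal{L}(V_h,V_h^{\prime})} \leq C_e\|\mathbf{v}\|_{\mathbf{L}^2(\Omega)},
\end{equation*}
with the same embedding constant $C_e$ as in the continuous case.

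Next, I would rewrite \eqref{eq:discrete_heat} as the operator equation $(I + \mathcal{A}_h^{-1}\mathcal{B}_{h,\mathbf{v}}) T_h = \mathcal{A}_h^{-1}\delta_z$ in $V_h$. Composing the two previous bounds gives
\begin{equation*}
\|\mathcal{A}_h^{-1}\mathcal{B}_{h,\mathbf{v}}\|_{\mathcal{L}(V_h)} \leq \tilde C_\xi C_e \|\mathbf{v}\|_{\mathbf{L}^2(\Omega)} \leq \alpha < 1,
\end{equation*}
where we used the smallness assumption \eqref{eq:disc_condicion}. The Neumann series (or equivalently a discrete Banach fixed point argument applied to the affine contraction $R_h \mapsto \mathcal{A}_h^{-1}\delta_z - \mathcal{A}_h^{-1}\mathcal{B}_{h,\mathbf{v}} R_h$) then yields invertibility of $I + \mathcal{A}_h^{-1}\mathcal{B}_{h,\mathbf{v}}$, hence existence and uniqueness of $T_h \in V_h$ solving \eqref{eq:discrete_heat}, together with the bound
\begin{equation*}
\|\nabla T_h\|_{\mathbf{L}^p(\Omega)} \leq \frac{\|\mathcal{A}_h^{-1}\|_{\mathcal{L}(V_h^{\prime},V_h)}}{1-\|\mathcal{A}_h^{-1}\mathcal{B}_{h,\mathbf{v}}\|_{\mathcal{L}(V_h)}}\|\delta_z\|_{W^{-1,p}(\Omega)} \leq \frac{\tilde C_\xi}{1-\alpha}\|\delta_z\|_{W^{-1,p}(\Omega)},
\end{equation*}
which is exactly \eqref{eq:disc_estimatesheat} with $\tilde C_\alpha = \tilde C_\xi/(1-\alpha)$.

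The only nontrivial ingredient is the $h$-uniformity of the constants $\tilde C_\xi$ and $C_e$; the former is provided, in the restricted range $p \in [2-\epsilon,2)$ and for $h \leq h_\star$, by Proposition \ref{prop:disc_stab}, while the latter is just the continuous Sobolev embedding applied to conforming discrete functions. Everything else is a verbatim transcription of the proof of Proposition \ref{prop:wp_temp}, so I do not foresee any additional obstacle.
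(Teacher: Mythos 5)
Your proposal is correct and follows exactly the route the paper intends: the paper's own proof of this proposition simply states that, with the discrete inf--sup condition \eqref{eq:discrete_inf_sup_Poisson} in hand, one repeats the operator-equation argument of Proposition \ref{prop:wp_temp}, which is precisely what you have written out in detail. The only point worth noting is that you have correctly identified the one genuinely discrete ingredient, namely the $h$-uniformity of $\tilde C_\xi$ in the restricted range $p\in[2-\epsilon,2)$, which the paper delegates to Proposition \ref{prop:disc_stab}.
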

\begin{proof}
With the discrete inf-sup condition \eqref{eq:discrete_inf_sup_Poisson} at hand, the proof follows the same arguments as those developed for the proof of Proposition \ref{prop:wp_temp}. For brevity, we skip the details.
\end{proof}


\subsection{Existence of discrete solutions} Having derived a well-posedness result for the discrete heat equation \eqref{eq:discrete_heat}, we are now in position to prove that our discrete system \eqref{eq:model_discrete} always has a solution. In addition, we show that solutions are uniformly bounded with respect to the discretization parameter $h$.

We proceed via a fixed point argument and define, for each $h > 0$, the map
$\mathcal{F}_{h}: V_h \rightarrow V_h$ by $\theta_{h} \mapsto \mathcal{F}(\theta_{h})=\zeta_{h}$. Here, $\zeta_{h}$ denotes the solution to the following discrete problem: Find $\zeta_{h} \in V_h$ such that
\begin{equation}
\displaystyle\int_\Omega(\kappa(\theta_h)\nabla \zeta_h\cdot \nabla S_h - \zeta_h \mathbf{u}_h(\theta_h)\cdot\nabla S_h)\mathsf{d}\mathbf{x}  =  \langle \delta_{z}, S_h\rangle \quad \forall S_h\in V_h.
\label{eq:discrete_zeta_h}
\end{equation}
As in the continuous case, we note that the definition of $\zeta_h$ implies solving a discrete Darcy's problem with a viscosity $\nu(\theta_h)$ depending on the prescribed discrete temperature $\theta_h$. The aforementioned discrete Darcy's problem reads as follows: Find $(\mathbf{u}_{h}(\theta_h),\mathsf{p}_{h}(\theta_h))\in \mathbf{X}_{h} \times Q_{h}$ such that
\begin{equation}\label{eq:discrete_darcy_system}
\begin{array}{rcll}
\displaystyle\int_\Omega (\nu(\theta_h) \mathbf{u}_h(\theta_h)\cdot \mathbf{v}_h-  \mathsf{p}_h(\theta_h) \textnormal{ div }\mathbf{v}_h)\mathsf{d}\mathbf{x}& = & 	\displaystyle\int_\Omega\mathbf{f}\cdot \mathbf{v}_h\mathsf{d}\mathbf{x} \quad &\forall \mathbf{v}_h\in \mathbf{X}_h,\\
\displaystyle\int_\Omega \mathsf{q}_h\textnormal{ div }\mathbf{u}_h(\theta_h)\mathsf{d}\mathbf{x} & = & 0 \quad &\forall \mathsf{q}_h\in Q_h.
\end{array}\hspace{-0.3cm}
\end{equation}

In what follows, we prove that the mapping $\mathcal{F}_{h}$ is well--defined when it is restricted to a ball of an appropriate size. To accomplish this task, we define the ball
\begin{equation*}
\mathfrak{B}_{T}^{h}:=\{\theta_{h}\in V_{h}:~ \|\nabla\theta_{h}\|_{\mathbf{L}^{p}(\Omega)}\leq \tilde{C}_{\frac{1}{2}}\|\delta_{z}\|_{W^{-1,p}(\Omega)}\},
\end{equation*}
where $\tilde{C}_{\frac{1}{2}}$ is defined as in \eqref{eq:disc_estimatesheat} with $\alpha=\frac{1}{2}$. As a final ingredient, we define
\begin{equation}\label{def:constant_C_tilde}
\tilde{\mathfrak{C}}:=(2C_{e}\tilde{C}_{\kappa})^{-1},
\end{equation}
where $C_e$ is defined as in \eqref{eq:termconv} and $\tilde{C}_{\kappa}$ corresponds to the constant involved in the discrete inf-sup condition \eqref{eq:discrete_inf_sup_Poisson} with $\xi$ being replaced by $\kappa$.

\begin{lemma}[$\mathcal{F}_{h}$ is well-defined] 
\label{lemma:welldefined_Fh}
Let $\mathbf{f}$ be such that  $\|\mathbf{f}\|_{\mathbf{L}^2(\Omega)}\leq \tilde{\mathfrak{C}}\nu_{-}$, where $\nu_{-}$ is as in \eqref{eq:nu}. Then, there exist $h_{\star}>0$ and $\epsilon>0$ such that the map $\mathcal{F}_{h}$ is well-defined on $\mathfrak{B}_{T}^{h}$, for all $0<h \leq h_{\star}$, whenever $2 - \epsilon \leq p < 2$. In addition, we have $\mathcal{F}_h(\mathfrak{B}_{T}^{h})\subset \mathfrak{B}_{T}^{h}$.
\end{lemma}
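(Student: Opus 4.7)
The plan is to mirror, step by step, the argument given in the continuous case (Lemma \ref{lemma:welldefined}), but relying this time on the discrete counterparts: the discrete inf-sup condition \eqref{eq:discrete_inf_sup} for the $RT_0$–$\mathbb{P}_0$ pair, and the discrete stability/well-posedness result for the convection-diffusion equation given in Proposition \ref{prop:disc_wp_temp}. The parameters $h_{\star}$ and $\epsilon$ appearing in the statement will be those supplied by Proposition \ref{prop:disc_stab} (and inherited by Proposition \ref{prop:disc_wp_temp}) applied with $\xi=\kappa(\theta_h)$, for any $\theta_h \in \mathfrak{B}_T^h$; note that $\kappa(\theta_h)\in L^\infty(\Omega)$ satisfies \eqref{eq:diffusivity_xi} uniformly in $\theta_h$ thanks to \eqref{eq:kappa}, so the hidden constants $\tilde C_{\kappa}$ do not depend on $\theta_h$.

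First I would fix $\theta_h\in\mathfrak{B}_T^h$ and show that the decoupled discrete Darcy problem \eqref{eq:discrete_darcy_system} admits a unique solution $(\mathbf{u}_h(\theta_h),\mathsf{p}_h(\theta_h))\in\mathbf{X}_h\times Q_h$. This is a direct application of the discrete saddle point theory: the bilinear form $(\mathbf{u}_h,\mathbf{v}_h)\mapsto\int_\Omega \nu(\theta_h)\mathbf{u}_h\cdot\mathbf{v}_h \mathsf{d}\mathbf{x}$ is coercive on all of $\mathbf{X}_h$ with constant $\nu_-$ (by \eqref{eq:nu}), and the inf-sup condition \eqref{eq:discrete_inf_sup} is satisfied uniformly in $h$.

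Next I would derive the crucial energy bound on $\mathbf{u}_h(\theta_h)$. Testing the first equation of \eqref{eq:discrete_darcy_system} with $\mathbf{v}_h=\mathbf{u}_h(\theta_h)\in\mathbf{X}_h$ and the second one with $\mathsf{q}_h=\mathsf{p}_h(\theta_h)\in Q_h$, and adding, the pressure-divergence coupling cancels and \eqref{eq:nu} together with Cauchy–Schwarz give
\begin{equation*}
\|\mathbf{u}_h(\theta_h)\|_{\mathbf{L}^2(\Omega)}\le \nu_{-}^{-1}\|\mathbf{f}\|_{\mathbf{L}^2(\Omega)}\le \tilde{\mathfrak{C}}=(2C_e\tilde{C}_\kappa)^{-1},
\end{equation*}
using the hypothesis on $\mathbf{f}$ and the definition \eqref{def:constant_C_tilde}. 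In particular, $\mathbf{u}_h(\theta_h)$ satisfies the smallness condition \eqref{eq:disc_condicion} with $\xi=\kappa(\theta_h)$ and $\alpha=\tfrac{1}{2}$.

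With this bound in hand I would invoke Proposition \ref{prop:disc_wp_temp} with diffusion coefficient $\xi=\kappa(\theta_h)$ and convective field $\mathbf{v}=\mathbf{u}_h(\theta_h)$. It yields the existence of a unique $\zeta_h\in V_h$ solving \eqref{eq:discrete_zeta_h} for all $0<h\le h_\star$ and $p\in[2-\epsilon,2)$, together with the a priori estimate
\begin{equation*}
\|\nabla\zeta_h\|_{\mathbf{L}^p(\Omega)}\le \tilde C_{\frac12}\|\delta_z\|_{W^{-1,p}(\Omega)},
\end{equation*}
which is exactly the condition defining membership in $\mathfrak{B}_T^h$. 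Thus $\mathcal{F}_h(\theta_h)=\zeta_h\in\mathfrak{B}_T^h$, completing the proof. The only mildly delicate point, and the one I would state most carefully, is verifying that the constants $\tilde C_\kappa$ and $h_\star$ supplied by Proposition \ref{prop:disc_stab} can be chosen uniformly as $\theta_h$ ranges over $\mathfrak{B}_T^h$; this follows because the hypotheses of that proposition on $\kappa(\theta_h)$ depend only on the bounds $\kappa_{-},\kappa_{+}$ in \eqref{eq:kappa}.
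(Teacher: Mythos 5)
Your proposal is correct and follows essentially the same route as the paper: unique solvability of the discrete Darcy system via the discrete inf-sup condition \eqref{eq:discrete_inf_sup}, the energy bound $\|\mathbf{u}_h(\theta_h)\|_{\mathbf{L}^2(\Omega)}\leq \nu_-^{-1}\|\mathbf{f}\|_{\mathbf{L}^2(\Omega)}\leq\tilde{\mathfrak{C}}$ obtained by testing with the discrete velocity, verification of \eqref{eq:disc_condicion} with $\alpha=\tfrac12$, and then Proposition \ref{prop:disc_wp_temp} to conclude $\zeta_h\in\mathfrak{B}_T^h$. Your explicit remark that $\tilde{C}_\kappa$ and $h_\star$ must be chosen uniformly over $\theta_h\in\mathfrak{B}_T^h$ is a point the paper leaves implicit, and it is a worthwhile addition.
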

\begin{proof}
Let $\theta_{h}\in \mathfrak{B}_{T}^{h}$. In view of the discrete inf-sup condition \eqref{eq:discrete_inf_sup}, there exists a unique discrete pair $(\mathbf{u}_{h}(\theta_h),\mathsf{p}_h(\theta_h))\in \mathbf{X}_{h}\times Q_h$ solving problem \eqref{eq:discrete_darcy_system} \cite[Proposition 2.42]{Guermond-Ern}. Moreover, by testing $\mathbf{v}_h = \mathbf{u}_{h}(\theta_h)$ in the first equation of \eqref{eq:discrete_darcy_system}, we arrive at the following bound for the discrete velocity field $\mathbf{u}_{h}(\theta_h)$:
\begin{equation*}
\|\mathbf{u}_{h}(\theta_h)\|_{\mathbf{L}^2(\Omega)}
\leq
\nu_{-}^{-1}\|\mathbf{f}\|_{\mathbf{L}^2(\Omega)}
\leq
\tilde{\mathfrak{C}} = (2C_{e}\tilde{C}_{\kappa})^{-1}.
\end{equation*}
Consequently, $\mathbf{u}_{h}(\theta_h)$ is such that $\tilde{C}_{\kappa} C_{e}\|\mathbf{u}_{h}(\theta_h)\|_{\mathbf{L}^2(\Omega)} \leq 1/2$, i.e., $\mathbf{u}_{h}(\theta_h)$ satisfies \eqref{eq:disc_condicion} with $\alpha = 1/2$. We can thus utilize the results of Proposition \ref{prop:disc_wp_temp} to guarantee the existence of a unique $\zeta_h \in V_{h}$ solving \eqref{eq:discrete_zeta_h}. In addition, Proposition \ref{prop:disc_wp_temp} also yields $\zeta_{h} \in\mathfrak{B}^{h}_{T}$. This concludes the proof.
\end{proof}

We now provide the existence of discrete solutions via a fixed point argument. 

\begin{theorem}[existence]\label{thm:existence_discrete}
Under the assumptions of Lemma \ref{lemma:welldefined_Fh}, there exist $h_{\star}>0$ and $\epsilon>0$ such that \eqref{eq:model_discrete} admits a discrete solution $(\mathbf{u}_h,\mathsf{p}_h,T_h)\in \mathbf{X}_{h} \times Q_{h}\times V_{h}$ for all $0<h \leq h_{\star}$, whenever $2 - \epsilon \leq p < 2$. In addition, we have that $T_{h} \in \mathfrak{B}_{T}^{h}$.
\end{theorem}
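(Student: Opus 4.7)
The plan is to apply Brouwer's fixed point theorem to the map $\mathcal{F}_h:\mathfrak{B}_T^h\to \mathfrak{B}_T^h$ defined in the discussion preceding Lemma \ref{lemma:welldefined_Fh}. Since $V_h$ is finite dimensional, the set $\mathfrak{B}_T^h$ is a nonempty, closed, bounded, and convex subset of a finite dimensional space, and Lemma \ref{lemma:welldefined_Fh} already guarantees that $\mathcal{F}_h(\mathfrak{B}_T^h)\subset \mathfrak{B}_T^h$ as soon as $h\leq h_\star$ and $2-\epsilon\leq p<2$. Thus the only nontrivial point to verify is the continuity of $\mathcal{F}_h$ on $\mathfrak{B}_T^h$, after which Brouwer's theorem (see, e.g., \cite[Theorem 9.9-2]{MR2373954}) delivers a fixed point $T_h\in\mathfrak{B}_T^h$, and the associated pair $(\mathbf{u}_h,\mathsf{p}_h)=(\mathbf{u}_h(T_h),\mathsf{p}_h(T_h))$ obtained from \eqref{eq:discrete_darcy_system} then gives a discrete solution to \eqref{eq:model_discrete} with $T_h\in\mathfrak{B}_T^h$, as claimed.

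For the continuity of $\mathcal{F}_h$, I would take a sequence $\{\theta_{h,n}\}_{n\geq 0}\subset \mathfrak{B}_T^h$ converging to some $\theta_h\in\mathfrak{B}_T^h$ in $V_h$; since $V_h$ is finite dimensional, this convergence is strong in every norm. Set $\zeta_{h,n}:=\mathcal{F}_h(\theta_{h,n})$ and $\zeta_h:=\mathcal{F}_h(\theta_h)$, and write the problem satisfied by $e_n:=\zeta_h-\zeta_{h,n}$ by subtracting the two instances of \eqref{eq:discrete_zeta_h}. The right-hand side is an analogue of the $g_n$ that appears in the proof of Theorem \ref{thm:existence} and involves the two terms $\zeta_{h,n}(\mathbf{u}_h(\theta_h)-\mathbf{u}_h(\theta_{h,n}))$ and $(\kappa(\theta_{h,n})-\kappa(\theta_h))\nabla \zeta_h$. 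The discrete inf-sup estimate of Proposition \ref{prop:disc_stab}, together with the bound $\tilde{C}_\kappa C_e\|\mathbf{u}_h(\theta_h)\|_{\mathbf{L}^2(\Omega)}\leq 1/2$ obtained as in Lemma \ref{lemma:welldefined_Fh}, will allow us to apply Proposition \ref{prop:disc_wp_temp} to $e_n$ and bound $\|\nabla e_n\|_{\mathbf{L}^p(\Omega)}$ by $\|g_n\|_{W^{-1,p}(\Omega)}$.

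To show $g_n\to 0$ in $W^{-1,p}(\Omega)$, I would mimic the arguments from the proof of Theorem \ref{thm:existence}: the Lipschitz continuity and uniform boundedness of $\nu$ and $\kappa$, together with the strong convergence $\theta_{h,n}\to\theta_h$ in $L^2(\Omega)$, yield $\nu(\theta_{h,n})\to\nu(\theta_h)$ and $\kappa(\theta_{h,n})\to\kappa(\theta_h)$ in $L^q(\Omega)$ for every $q<\infty$ (using for instance \cite[Lemma 2.1]{MR3802674}), which handles the $\kappa$ term directly. For the Darcy term, $\mathbf{u}_h(\theta_h)-\mathbf{u}_h(\theta_{h,n})$ converges to $\mathbf{0}$ in $\mathbf{L}^2(\Omega)$ by testing the difference of the two Darcy problems \eqref{eq:discrete_darcy_system} against $\mathbf{v}_h=\mathbf{u}_h(\theta_h)-\mathbf{u}_h(\theta_{h,n})$ and using incompressibility to kill the pressure contribution, so that $\nu_-\|\mathbf{u}_h(\theta_h)-\mathbf{u}_h(\theta_{h,n})\|_{\mathbf{L}^2(\Omega)}^2\lesssim \|(\nu(\theta_h)-\nu(\theta_{h,n}))\mathbf{u}_h(\theta_h)\|_{\mathbf{L}^2(\Omega)}\|\mathbf{u}_h(\theta_h)-\mathbf{u}_h(\theta_{h,n})\|_{\mathbf{L}^2(\Omega)}$, and dominated convergence gives the desired limit. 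Combining these estimates shows $\|\nabla e_n\|_{\mathbf{L}^p(\Omega)}\to 0$, i.e., $\mathcal{F}_h$ is continuous.

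The main obstacle I anticipate is essentially bookkeeping: ensuring that all constants in the uniform bounds for $\mathbf{u}_h(\theta_h)$ and in the discrete inf-sup estimate \eqref{eq:discrete_inf_sup_Poisson} are independent of $h$ on the range $0<h\leq h_\star$ and $2-\epsilon\leq p<2$, so that the hypothesis of Proposition \ref{prop:disc_wp_temp} remains available uniformly and the Brouwer argument produces a discrete solution for every admissible $h$. This is already encoded in Propositions \ref{prop:disc_stab}--\ref{prop:disc_wp_temp} and in Lemma \ref{lemma:welldefined_Fh}, so the argument is expected to be otherwise routine and substantially shorter than the proof of Theorem \ref{thm:existence} because compactness in $V_h$ comes for free.
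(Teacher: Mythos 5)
Your proposal is correct and follows essentially the same route as the paper: the authors likewise invoke Brouwer's fixed point theorem on $\mathcal{F}_h:\mathfrak{B}_T^h\to\mathfrak{B}_T^h$, noting that only continuity of $\mathcal{F}_h$ needs to be checked, which they do by repeating the arguments of Theorem \ref{thm:existence} together with the equivalence of weak and strong convergence in finite dimensions. Your write-up simply fills in the details (the $g_n$ estimate and the Darcy comparison) that the paper leaves implicit.
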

\begin{proof}
Since we are in finite dimensions, we apply Brouwer's fixed point theorem \cite[Theorem 3.2]{MR787404}. To be able to invoke such a theorem, we only need to verify the continuity of $\mathcal{F}_h$. This is achieved by repeating the arguments utilized within the proof of Theorem \ref{thm:existence} in combination with the fact that, since we are in finite dimensions, we can pass from weak to strong convergence. 
\end{proof}


\subsection{Convergence}

We present the following convergence result.

\begin{theorem}[convergence]\label{thm:convergence}
Let $\mathbf{f}$ be such that $
\| \mathbf{f}\|_{\mathbf{L}^2(\Omega)}\leq \nu_{-}\min\{\mathfrak{C},\tilde{\mathfrak{C}}\},
$
where $\nu_{-}$ is given as in \eqref{eq:nu} and $\mathfrak{C}$ and $\tilde{\mathfrak{C}}$ are defined in \eqref{def:constant_C} and \eqref{def:constant_C_tilde}, respectively. Then, there exist $\epsilon >0$, $h_{\star} >0$, and a nonrelabeled subsequence $\{T_h\}_{0<h \leq h_{\star}}$ such that $T_h \rightharpoonup T$ in $W_0^{1,p}(\Omega)$, as $h\downarrow 0$, whenever $2 - \epsilon \leq p < 2$. The limit point $T$ solves problem \eqref{eq:heat_decoupled} and $(\mathbf{u}(T),\mathsf{p}(T))\in \mathbf{H}_0(\textnormal{div},\Omega)\times L_0^2(\Omega)$ solves \eqref{eq:Darcy_decoupled}.
\end{theorem}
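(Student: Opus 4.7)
The plan is to combine the uniform bounds of Theorem \ref{thm:existence_discrete} with weak compactness and a strong $\mathbf{L}^2$ convergence of the discrete velocities in order to pass to the limit in each equation of \eqref{eq:model_discrete}. For each admissible $h$, Theorem \ref{thm:existence_discrete} provides a solution $(\mathbf{u}_h,\mathsf{p}_h,T_h)$ with $T_h\in\mathfrak{B}_T^h$, so $\|\nabla T_h\|_{\mathbf{L}^p(\Omega)}$ is uniformly bounded. Choosing $\mathbf{v}_h=\mathbf{u}_h$ in the first equation of \eqref{eq:model_discrete} and using the pointwise identity $\text{div}\,\mathbf{u}_h=0$ (since $\text{div}\,\mathbf{X}_h\subset Q_h$) yields $\|\mathbf{u}_h\|_{\mathbf{L}^2(\Omega)}\leq\nu_-^{-1}\|\mathbf{f}\|_{\mathbf{L}^2(\Omega)}$; the discrete inf--sup condition \eqref{eq:discrete_inf_sup} then controls $\|\mathsf{p}_h\|_{L^2(\Omega)}$. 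These bounds let me extract a nonrelabeled subsequence with $T_h\rightharpoonup T$ in $W_0^{1,p}(\Omega)$, $\mathbf{u}_h\rightharpoonup\mathbf{u}$ in $\mathbf{H}_0(\text{div},\Omega)$, and $\mathsf{p}_h\rightharpoonup\mathsf{p}$ in $L_0^2(\Omega)$. The compact embedding $W_0^{1,p}(\Omega)\hookrightarrow L^r(\Omega)$ for any $r<2p/(2-p)$ upgrades the first convergence to $T_h\to T$ strongly in $L^r(\Omega)$ and, along a further subsequence, pointwise a.e.; together with the boundedness and continuity of $\nu,\kappa$, dominated convergence then yields $\nu(T_h)\to\nu(T)$ and $\kappa(T_h)\to\kappa(T)$ in every $L^q(\Omega)$ with $q<\infty$.

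Next I pass to the limit in the Darcy block. For arbitrary $\mathbf{v}\in\mathbf{H}_0(\text{div},\Omega)$ and $\mathsf{q}\in L_0^2(\Omega)$, I construct $\mathbf{v}_h\in\mathbf{X}_h$ with $\mathbf{v}_h\to\mathbf{v}$ in $\mathbf{H}(\text{div},\Omega)$ using the density result \eqref{eq:density_result}, and $\mathsf{q}_h\in Q_h$ with $\mathsf{q}_h\to\mathsf{q}$ in $L^2(\Omega)$. The product convergence $\nu(T_h)\mathbf{v}_h\to\nu(T)\mathbf{v}$ strongly in $\mathbf{L}^2(\Omega)$, paired with $\mathbf{u}_h\rightharpoonup\mathbf{u}$, disposes of the nonlinear viscous term, while the remaining terms pass to the limit by weak times strong convergence. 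Hence $(\mathbf{u},\mathsf{p})=(\mathbf{u}(T),\mathsf{p}(T))$ solves \eqref{eq:Darcy_decoupled}. Testing \eqref{eq:Darcy_decoupled} with $\mathbf{v}=\mathbf{u}$ and the first equation of \eqref{eq:model_discrete} with $\mathbf{v}_h=\mathbf{u}_h$ yields
\[
\int_\Omega \nu(T_h)|\mathbf{u}_h|^2\,d\mathbf{x} = \int_\Omega \mathbf{f}\cdot\mathbf{u}_h\,d\mathbf{x} \longrightarrow \int_\Omega \mathbf{f}\cdot\mathbf{u}\,d\mathbf{x} = \int_\Omega \nu(T)|\mathbf{u}|^2\,d\mathbf{x}.
\]
Expanding the nonnegative quantity $\int_\Omega\nu(T_h)|\mathbf{u}_h-\mathbf{u}|^2\,d\mathbf{x}$ and using the weak convergence of $\mathbf{u}_h$ together with $\nu(T_h)\mathbf{u}\to\nu(T)\mathbf{u}$ strongly in $\mathbf{L}^2(\Omega)$ (by dominated convergence) shows this quantity vanishes; the lower bound $\nu(T_h)\geq\nu_-$ then delivers the crucial strong convergence $\mathbf{u}_h\to\mathbf{u}$ in $\mathbf{L}^2(\Omega)$.

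Finally, I pass to the limit in the heat equation. For $S\in W_0^{1,p'}(\Omega)$, I build $S_h\in V_h$ with $S_h\to S$ in $W_0^{1,p'}(\Omega)$ by Lagrange interpolation of smooth approximations of $S$ combined with a diagonal argument and \eqref{eq:interp_Lagrange}; since $W_0^{1,p'}(\Omega)\hookrightarrow C(\bar\Omega)$, this also gives $\langle\delta_z,S_h\rangle=S_h(z)\to S(z)=\langle\delta_z,S\rangle$. The diffusive term converges because $\kappa(T_h)\nabla S_h\to\kappa(T)\nabla S$ strongly in $\mathbf{L}^{p'}(\Omega)$ (by boundedness of $\kappa$ and dominated convergence applied to $(\kappa(T_h)-\kappa(T))\nabla S$) while $\nabla T_h\rightharpoonup\nabla T$ weakly in $\mathbf{L}^p(\Omega)$. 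For the convective term I use the splitting
\[
T_h\mathbf{u}_h\cdot\nabla S_h - T\mathbf{u}\cdot\nabla S = (T_h-T)\mathbf{u}_h\cdot\nabla S_h + T(\mathbf{u}_h-\mathbf{u})\cdot\nabla S_h + T\mathbf{u}\cdot(\nabla S_h-\nabla S),
\]
where the last two pieces are controlled by H\"older's inequality with exponents $(2p/(2-p),2,p')$ together with the strong convergences $\mathbf{u}_h\to\mathbf{u}$ in $\mathbf{L}^2(\Omega)$ and $\nabla S_h\to\nabla S$ in $\mathbf{L}^{p'}(\Omega)$.

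The main obstacle lies in the first piece of this splitting: the natural H\"older exponents land exactly on the critical, non-compact Sobolev embedding $W_0^{1,p}(\Omega)\hookrightarrow L^{2p/(2-p)}(\Omega)$, so strong convergence of $T_h$ at the endpoint is not available from the compactness argument. I overcome this by a Vitali/equi-integrability argument: $(T_h-T)\mathbf{u}_h\cdot\nabla S_h\to 0$ pointwise a.e.\ along a further subsequence (using that $\mathbf{u}_h\to\mathbf{u}$ strongly in $\mathbf{L}^2(\Omega)$ from the previous step), while uniform integrability follows from uniform bounds on $\|T_h-T\|_{L^{2p/(2-p)}(\Omega)}$ and $\|\mathbf{u}_h\|_{\mathbf{L}^2(\Omega)}$ together with the absolute continuity of the $\mathbf{L}^{p'}$ norm of $\nabla S_h$. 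Putting everything together, the triple $(\mathbf{u},\mathsf{p},T)$ solves \eqref{eq:modelweak}; in particular, $T$ solves \eqref{eq:heat_decoupled} and $(\mathbf{u},\mathsf{p})=(\mathbf{u}(T),\mathsf{p}(T))$ solves \eqref{eq:Darcy_decoupled}.
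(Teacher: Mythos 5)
Your proposal is correct and follows the same overall skeleton as the paper's proof: uniform bounds from Theorem \ref{thm:existence_discrete} and the discrete inf--sup condition, extraction of weak limits, identification of the Darcy limit via the density results \eqref{eq:density_result}, and then passage to the limit in the heat equation. Where you genuinely diverge is in the two delicate sub-steps. First, for the strong $\mathbf{L}^2$ convergence of the velocities the paper simply cites the external result $\sqrt{\nu(T_h)}\mathbf{u}_h \to \sqrt{\nu(T)}\mathbf{u}$ from \cite[Lemma 2.1]{MR3802674}; you instead give a self-contained Radon--Riesz-type argument (weak convergence plus convergence of the weighted energy $\int_\Omega \nu(T_h)|\mathbf{u}_h|^2\,\mathsf{d}\mathbf{x} \to \int_\Omega \nu(T)|\mathbf{u}|^2\,\mathsf{d}\mathbf{x}$, then expand $\int_\Omega\nu(T_h)|\mathbf{u}_h-\mathbf{u}|^2\,\mathsf{d}\mathbf{x}$ and use $\nu\ge\nu_-$), which is valid — note it relies on $\operatorname{div}\mathbf{u}_h=0$ holding pointwise, which you correctly justify from the $RT_0/\mathbb{P}_0$ pairing — and makes the proof independent of the cited lemma. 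Second, for the piece $(T_h-T)\mathbf{u}_h\cdot\nabla S_h$ sitting at the non-compact endpoint embedding, the paper is terse (``similar arguments to Theorem \ref{thm:existence}''), whereas you supply an explicit Vitali/equi-integrability argument; this works (uniform absolute continuity of $\{|\nabla S_h|^{p'}\}$ follows from its strong $\mathbf{L}^{p'}$ convergence), though a slightly lighter route is available: split off $(T_h-T)\mathbf{u}\cdot\nabla S$, which vanishes because $T_h\rightharpoonup T$ weakly in $L^{2p/(2-p)}(\Omega)$ tested against the fixed function $\mathbf{u}\cdot\nabla S\in L^{2p/(3p-2)}(\Omega)$, and control the remainder by H\"older using the strong convergences $\mathbf{u}_h\to\mathbf{u}$ in $\mathbf{L}^2(\Omega)$ and $\nabla S_h\to\nabla S$ in $\mathbf{L}^{p'}(\Omega)$. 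Your choice to test with arbitrary $S\in W_0^{1,p'}(\Omega)$ rather than the paper's $S\in C_0^\infty(\Omega)$ is a cosmetic difference handled correctly by your diagonal approximation.
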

\begin{proof}
In view of the assumption on $\mathbf{f}$, Theorem \ref{thm:existence_discrete} allows us to conclude that, for every $h>0$, the discrete coupled system \eqref{eq:model_discrete} admits at least a solution $(\mathbf{u}_h,\mathsf{p}_h,T_h)$. On the other hand, Theorem \ref{thm:existence_discrete} also guarantees that $\{T_h\}_{0<h \leq h_{\star}}$ is uniformly bounded in $W_0^{1,p}(\Omega)$ while \cite[inequalities (3.13)]{MR3802674} yield, for every $h>0$, the bounds
\begin{equation*}
\|\mathbf{u}_h\|_{\mathbf{L}^2(\Omega)}\leq \nu_{-}^{-1}\|\mathbf{f}\|_{\mathbf{L}^2(\Omega)}, 
\qquad \|\mathsf{p}_h\|_{L^2(\Omega)}\leq \tilde{\beta}^{-1}(1+\nu_{-}^{-1}\nu_{+})\|\mathbf{f}\|_{\mathbf{L}^2(\Omega)}.
\end{equation*}
Consequently, we have that (up to a subsequence) $(\mathbf{u}_h,\mathsf{p}_h,T_h) \rightharpoonup (\mathbf{u},\mathsf{p},T)$ in $\mathbf{L}^2(\Omega)\times L^2(\Omega)\times W_0^{1,p}(\Omega)$, as $h\downarrow 0$, whenever $2 - \epsilon \leq p < 2$.

In what follows we prove that
\begin{enumerate}[label=\textbf{S.\arabic*}]
\renewcommand{\labelenumi}{\textbf{\theenumi}}
\renewcommand{\theenumi}{S.\arabic{enumi}}
\item[(i)] \label{item_i} 
$(\mathbf{u}_h,\mathsf{p}_h)\rightharpoonup (\mathbf{u},\mathsf{p})$ in $\mathbf{H}_0(\textnormal{div},\Omega)\times L_0^2(\Omega)$, as $h\downarrow 0$, and that
\item[(ii)] \label{item_ii} 
$(\mathbf{u},\mathsf{p},T)=(\mathbf{u}(T),\mathsf{p}(T),T)\in \mathbf{H}_0(\textnormal{div},\Omega)\times L_0^2(\Omega) \times W_0^{1,p}(\Omega)$ solves system \eqref{eq:modelweak} or, equivalently, problems \eqref{eq:heat_decoupled} and \eqref{eq:Darcy_decoupled}.
\end{enumerate}

We first prove (i). Let $\mathsf{q}\in C_0^{\infty}(\Omega)$. We invoke the weak convergence $\mathbf{u}_h\rightharpoonup \mathbf{u}$ in $\mathbf{L}^2(\Omega)$ to immediately arrive at
\[
\int_\Omega q\text{ div } \mathbf{u}_h \mathsf{d}\mathbf{x}
=
- \int_\Omega \nabla q \cdot \mathbf{u}_h \mathsf{d}\mathbf{x} 
\rightarrow 
- \int_\Omega \nabla q \cdot \mathbf{u} \mathsf{d}\mathbf{x}
= 
\int_\Omega q \text{ div } \mathbf{u} \mathsf{d}\mathbf{x},
\quad
h \downarrow 0.
\]
Consequently, $\mathbf{u}_h \rightharpoonup \mathbf{u}$ in $\mathbf{H}(\textnormal{div},\Omega)$ as $h \downarrow 0$. The continuity of the normal trace operator \cite[Theorem 2.5]{MR851383} implies that $\mathbf{u} \cdot \mathbf{n}|_{\partial \Omega}=0$ and thus that $\mathbf{u} \in \mathbf{H}_0(\textnormal{div},\Omega)$. The fact that $\mathsf{p} \in L_0^2(\Omega)$ is trivial.

The rest of the proof is dedicated to prove (ii). Let us start by proving that $(\mathbf{u}(T),\mathsf{p}(T))=(\mathbf{u},\mathsf{p})\in\mathbf{H}_0(\textnormal{div},\Omega)\times L_0^2(\Omega)$ solves Darcy's system \eqref{eq:Darcy_decoupled}. 

Let  $\mathbf{v}\in\mathbf{C}_0^\infty(\Omega)$ and let $\mathbf{v}_h\in \mathbf{X}_h$. A simple computation reveals that
\begin{multline*}
\mathfrak{I}:=
\left|\int_\Omega \left(\nu(T_h)\mathbf{u}_h\cdot\mathbf{v}_h \!- \!\mathsf{p}_h \text{ div } \mathbf{v}_h \!-\! \nu(T)\mathbf{u}\cdot\mathbf{v} \!+\! \mathsf{p} \text{ div } \mathbf{v}\right)\! \mathsf{d}\mathbf{x}\right|  \\
\leq \left|\int_\Omega (\nu(T_h)\mathbf{u}_h - \nu(T)\mathbf{u})\cdot\mathbf{v} \,\mathsf{d}\mathbf{x} \right|  
+
\left|\int_\Omega \nu(T_h)\mathbf{u}_h\cdot(\mathbf{v}_h - \mathbf{v})\mathsf{d}\mathbf{x}  \right|
+ \left|\int_\Omega  (\mathsf{p} - \mathsf{p}_h) \text{ div } \mathbf{v} \,\mathsf{d}\mathbf{x}  \right|\\
 + \left|\int_\Omega  \mathsf{p}_h \text{ div } (\mathbf{v} - \mathbf{v}_h) \mathsf{d}\mathbf{x}  \right| = \mathrm{I} + \mathrm{II} + \mathrm{III} + \mathrm{IV}.   
\end{multline*}
The density results stated in \eqref{eq:density_result} immediately reveal that $\mathrm{II}, \mathrm{IV} \rightarrow$ 0 as $h \downarrow 0$. Since $\text{ div }\mathbf{v} \in \mathbf{L}^2(\Omega)$ and $\mathsf{p}_h \rightharpoonup \mathsf{p}$ in $L_0^2(\Omega)$, it is also immediate that $\mathrm{III} \rightarrow 0$ as $h\downarrow 0$. To control the term $\mathrm{I}$, we first notice that
\[
\int_\Omega (\nu(T_h)\mathbf{u}_h - \nu(T)\mathbf{u})\cdot\mathbf{v} \,\mathsf{d}\mathbf{x}
= 
\int_\Omega (\nu(T_h) - \nu(T)) \mathbf{u}_h \cdot \mathbf{v} \mathsf{d}\mathbf{x} 
+
\int_\Omega \nu(T) (\mathbf{u}_h - \mathbf{u})\cdot\mathbf{v} \,\mathsf{d}\mathbf{x}.
\]
Since the embedding $W_0^{1,p}(\Omega)\hookrightarrow L^{q}(\Omega)$ is compact for $q<2p/(2-p)$ \cite[Theorem 6.3, Part \textbf{I}]{MR2424078} and $\nu$ is continuous and uniformly bounded, we have that  $\nu(T_h)\rightarrow \nu(T)$ in $L^{q}(\Omega)$, as $h \downarrow 0$, for $q<2p/(2-p)$ \cite[Theorem 7]{MR120342}. This and the weak convergence $\mathbf{u}_h \rightharpoonup \mathbf{u}$ in $\mathbf{L}^2(\Omega)$ reveal that $\mathrm{I} \rightarrow 0$ as $h \downarrow 0$. Consequently, $\mathfrak{I} \rightarrow 0$ as $h \downarrow 0$, which reveals that the limit point $(\mathbf{u},\mathsf{p})\in\mathbf{H}_0(\textnormal{div},\Omega)\times L_0^2(\Omega)$ solves the first equation in \eqref{eq:Darcy_decoupled}. To prove that the velocity field $\mathbf{u}$ satisfies the second equation in \eqref{eq:Darcy_decoupled}, we let $\mathsf{q}\in C_0^\infty(\Omega)$ with zero mean and $\mathsf{q}_h\in Q_h$ be its $L^2$--projection onto $Q_h$. Hence, in view of the strong convergence $\mathsf{q}_h\to \mathsf{q}$ in $L_0^2(\Omega)$, we obtain
\begin{equation*}
\left|\int_\Omega  (\mathsf{q}\text{ div } \mathbf{u} - \mathsf{q}_h\text{ div } \mathbf{u}_h)\mathsf{d}\mathbf{x} \right|
\leq 
\left|\int_\Omega  (\mathsf{q}-\mathsf{q}_h)\text{ div } \mathbf{u}\, \mathsf{d}\mathbf{x}\right| + \left|\int_\Omega  \mathsf{q}_h\text{ div } (\mathbf{u} - \mathbf{u}_h)\mathsf{d}\mathbf{x} \right| \rightarrow 0,
\end{equation*}
as $h\downarrow 0$. Consequently, the pair $(\mathbf{u},\mathsf{p})$ solves \eqref{eq:Darcy_decoupled}.

It remains to prove that $T\in W_0^{1,p}(\Omega)$ solves \eqref{eq:heat_decoupled} with $\mathbf{u}=\mathbf{u}(T)$. To accomplish this task, we let $S\in C_0^{\infty}(\Omega)$ and $S_h\in V_h$. Set $S_h=I_h S$, utilize H\"older's inequality, the assumptions on $\kappa$, the Lebesgue dominated convergence, and standard properties of the interpolation operator $I_h$ to obtain
\begin{multline*}
\left|\int_\Omega (\kappa(T)\nabla T\cdot \nabla S - \kappa(T_h)\nabla T_h\cdot \nabla S_h)\mathsf{d}\mathbf{x}\right| 
\leq
\left|\int_\Omega (\kappa(T) - \kappa(T_h))\nabla T\cdot \nabla S\mathsf{d}\mathbf{x}\right| \\
+\left|\int_\Omega \kappa(T_h)\nabla(T - T_h)\cdot \nabla S\mathsf{d}\mathbf{x}\right|+\left|\int_\Omega \kappa(T_h)\nabla T_h\cdot \nabla (S-S_h)\mathsf{d}\mathbf{x}\right| \rightarrow 0, \text{ as } h\downarrow 0.
\end{multline*}
Finally, to prove that $\int_\Omega T_h \mathbf{u}_h \cdot \nabla S_h\mathsf{d}\mathbf{x} \to  \int_\Omega T \mathbf{u} \cdot \nabla S\mathsf{d}\mathbf{x}$, as $h\downarrow 0$, we invoke similar arguments to those developed in the proof of Theorem \ref{thm:existence} and the convergence result $\sqrt{\nu(T_h)}\mathbf{u}_h(T_h) \to \sqrt{\nu(T)}\mathbf{u}(T)$ in $\mathbf{L}^2(\Omega)$, as $h\downarrow 0$, which follows from (2.23) in \cite[Lemma 2.1]{MR3802674}. This proves that $T\in W_0^{1,p}(\Omega)$ solves \eqref{eq:heat_decoupled} with $\mathbf{u}=\mathbf{u}(T)$ and concludes the proof.
\end{proof}


\section{A posteriori error analysis}\label{sec:a_posteriori_anal}

In this section, we devise and analyze an a posteriori error estimator for the coupled system \eqref{eq:modelweak}. We obtain a global reliability estimate and investigate local efficiency results. To perform an analysis, in addition to the assumptions stated in \eqref{eq:nu}, we shall require that: 

$\bullet$ The thermal diffusivity $\kappa$ is a positive constant, and

$\bullet$ The forcing term $\mathbf{f}\in \mathbf{H}^1(\Omega)$ and $\|\mathbf{f}\|_{\mathbf{L}^2(\Omega)}\leq \nu_{-}\min\{\mathfrak{C},\tilde{\mathfrak{C}}\}$; cf. Theorem \ref{thm:convergence}. 

In what follows we comment on the assumption $\mathbf{f}\in \mathbf{H}^1(\Omega)$ (see also the discussion in \cite[Section 4]{MR4041519}): To obtain the identity \eqref{eq:error_equation_darcy_1} we utilize the Green's formula of \cite[Theorem 2.11, Chapter I]{MR851383} on each element $K \in \mathscr{T}$. \cite[Theorem 2.11, Chapter I]{MR851383} also guarantees that the tangential trace $\mathfrak{T}_{\boldsymbol{\tau}}: \boldsymbol{v} \rightarrow \boldsymbol{v} \cdot \boldsymbol{\tau}|_{\partial\mathcal{O}}$ is a linear and continuous operator from $\mathbf{H}( \textbf{curl},\mathcal{O})$ into $H^{-1/2}(\mathcal{O})$ for any Lipschitz domain $\mathcal{O}$. The additional regularity $\mathbf{f} \in \mathbf{H}( \textbf{curl},\Omega)$ would thus seem sufficient. However, in order to have a local and integral representation of the residual on the interior sides $\gamma$ we assume that $\mathbf{f}\in \mathbf{H}^1(\Omega)$ so that the interelement residuals $\llbracket (\mathbf{f}-\nu(T_h)\mathbf{u}_h)\cdot\boldsymbol\tau\rrbracket$, defined in \eqref{eq:jump_darcy}, are well-defined in $L^2(\gamma)$.

The existence of a solution $(\mathbf{u},\mathsf{p},T) \in \mathbf{H}_0(\textnormal{div},\Omega)\times L_0^2(\Omega)\times W_0^{1,p}(\Omega)$ to system \eqref{eq:modelweak} is guaranteed by Theorem \ref{thm:existence} for $p<2$. Theorem \ref{thm:existence_discrete} guaantees the existence of $\epsilon >0$ and $h_{\star}$ such that the discrete problem \eqref{eq:model_discrete}
admits a solution $(\mathbf{u}_h,\mathsf{p}_h,T_h)\in\mathbf{X}_{h}\times Q_{h}\times V_{h}$ for every $0<h \leq h_{\star}$ and $2-\epsilon \leq p <2$. Within our a posteriori error analysis setting, since we will not be dealing with uniform refinement, the parameter $h$ does not bear the meaning of a mesh size. It can thus be thought as $h = 1/k$, where $k\in\mathbb{N}$ is the index set in a sequence of refinements of an initial mesh or partition $\mathscr{T}_{0}$.


\subsection{A posteriori error estimators}

In this section, we devise an a posteriori error estimator for the finite element approximation \eqref{eq:model_discrete} of system \eqref{eq:modelweak}. The proposed error estimator will be decomposed as the sum of three individual contributions: one contribution that accounts for the discretization of the heat equation with convection and two contributions related to the discretization of Darcy's system.

Let us begin our analysis by introducing some notation. Let  $\mathbf{w}_{h}$ be a discrete tensor valued function and let $\gamma \in \mathscr{S}$ be an internal side. We define the \emph{jump} or \emph{interelement residual} of $\mathbf{w}_{h}$ on $\gamma$ by
\begin{equation*}
\llbracket \mathbf{w}_{h} \cdot \mathbf{n} \rrbracket:= \mathbf{n}^{+} \cdot \mathbf{w}_{h}|^{}_{K^{+}} + \mathbf{n}^{-} \cdot \mathbf{w}_{h}|^{}_{K^{-}},
\end{equation*}
where $\mathbf{n}^{+}$ and $\mathbf{n}^{-}$ denote the unit normals to $\gamma$ pointing towards $K^{+}$ and $K^{-}$, respectively; $K^{+}$, $K^{-} \in \T_h$ are such that $K^{+} \neq K^{-}$ and $\partial K^{+} \cap \partial K^{-} = \gamma$. Similarly, 
\begin{equation*}
\llbracket \mathbf{w}_{h} \cdot \boldsymbol{\tau} \rrbracket:= \boldsymbol{\tau}^{+} \cdot \mathbf{w}_{h}|^{}_{K^{+}} + \boldsymbol{\tau}^{-} \cdot \mathbf{w}_{h}|^{}_{K^{-}},
\end{equation*}
where $\boldsymbol\tau^{+}$ and $\boldsymbol\tau^{-}$ denote the unit tangents to $\gamma$; cf. Figure \ref{fig:normal_and_tangent}. Notice that $\boldsymbol\tau^{+}$ and $\mathbf{n}^{+}$ are orthogonal; similarly $\boldsymbol\tau^{-}$ and $\mathbf{n}^{-}$.

\begin{figure}[!ht]
\centering
\includegraphics[width=7.5cm,height=2.0cm,scale=0.33]{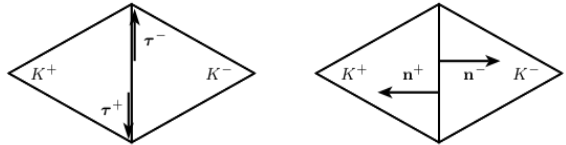}
\caption{Representation of the tangent and normal vectors, respectively, on $K^{+}$ and $K^{-}$.}
\label{fig:normal_and_tangent}
\end{figure}

\subsubsection{Heat equation with convection: local indicators and a posteriori error estimator}
Let $K \in \mathscr{T}_h$ be a simplex and $\gamma \in \mathscr{S}$ be an internal side. We define the element residual $\mathpzc{R}_K$ and the interelement residual $\mathpzc{J}_{\gamma}$ as
\begin{equation}
\mathpzc{R}_K:= (-\nabla T_h \cdot \mathbf{u}_h - T_h \text{div }\mathbf{u}_h)|_{K},
\qquad
\mathpzc{J}_{\gamma}:= \llbracket(\kappa\nabla T_h -T_h \mathbf{u}_h)\cdot \mathbf{n} \rrbracket.
\label{eq:residual_heat}
\end{equation}
With the residuals $\mathpzc{R}_K$ and $\mathpzc{J}_{\gamma}$ at hand, we define a local indicator $\mathpzc{E}_{p,K}$ associated to the underlying finite element discretization of the heat equation on the basis of three scenarios. First, if $z \in K$ and $z$ is not a vertex of $K$, then
\begin{equation}
\label{eq:local_indicator_temp_I}
\mathpzc{E}_{p,K}:=
\left( 
h_K^{2-p} 
+
h_K^p \| \mathpzc{R}_K\|_{L^p(K)}^p 
+
h_K \| \mathpzc{J}_{\gamma} \|_{L^p(\partial K \setminus \partial \Omega)}^p
\right)^{\frac{1}{p}}.
\end{equation}
Second, if $z \in K$ and $z$ is a vertex of $K$, then
\begin{equation}\label{eq:local_indicator_temp_II}
\mathpzc{E}_{p,K}:=
\left( 
h_K^p \| \mathpzc{R}_K  \|_{L^p(K)}^p 
+
h_K \|
\mathpzc{J}_{\gamma}
\|_{L^p(\partial K \setminus \partial \Omega)}^p
\right)^{\frac{1}{p}}.
\end{equation}
Third, if $z \notin K$, then the indicator $\mathpzc{E}_{p,K}$ is defined as in \eqref{eq:local_indicator_temp_II}. 

The following comments are now in order. We first recall that we consider our elements $K$ to be closed sets. On the other hand, the Lagrange interpolation operator $I_h$ is well-defined over the space $W_0^{1,p\prime}(\Omega)$ with $p\prime>2$. Since $I_h$ is constructed by matching the point values at the Lagrange nodes, we have the basic property
\[
(S -I_{h}S)(z) = 0
\qquad
\forall S \in W_0^{1,p\prime}(\Omega), \, p\prime>2, \forall K \in \T_h.
\]
Here $z$ denotes a vertex of $K$. This simple observation points in the direction of explaining the discrepancy between definitions \eqref{eq:local_indicator_temp_I} and \eqref{eq:local_indicator_temp_II}.

With the previous indicators at hand, we define the corresponding error estimator
\begin{equation}\label{eq:error_estimator_heat}
\mathpzc{E}_{p,\T}:=\left( \sum_{K\in\mathscr{T}} \mathpzc{E}_{p,K}^p\right)^{\frac{1}{p}},\qquad p<2.
\end{equation}

\subsubsection{Darcy's problem: local indicators and a posteriori error estimator}
The devising and analysis of residual-type a posteriori error estimates for Raviart--Thomas finite element approximations of Darcy's problem are not as simple as for the Laplace's equation. In fact, two difficulties prevent the success of the straightforward application of frequently used arguments. First, traces of functions in $\mathbf{H}(\text{div},\Omega)$ are only contained in $H^{-1/2}(\partial \Omega)$ \cite[Theorem 2.5]{MR851383} and, for an internal side $\gamma$, the corresponding jump term $\llbracket p_h \mathbb{I} \mathbf{n} \rrbracket$ does not belong to $H^{1/2}(\gamma)$ \cite[Section III.3.3]{MR1115205}; $\mathbb{I}$ denotes the identity matrix. The second difficulty is given the fact that the space $\mathbf{H}(\text{div},\Omega)$ is anisotropic \cite{MR1427472}; for solenoidal functions the $\mathbf{H}(\text{div},\Omega)$-norm and the $L^2(\Omega)$-norm coincide. The authors of \cite{MR1427472} circumvent these difficulties and devise an a posteriori error estimator that is reliable and efficient in suitable mesh-dependent norms; see \cite[inequality (3.12)]{MR1427472} and \cite[Theorem 3.3]{MR1427472}. However, the devised error estimator for the natural norm in $\mathbf{H}(\text{div},\Omega) \times L^2(\Omega)$ is only reliable; the derived efficiency estimate \cite[inequality (4.20)]{MR1427472}, that involves a negative power of $h$, is not optimal. Later, the author of \cite{MR1408371} provides reliable and efficient a posteriori error estimates in the natural norm in $\mathbf{H}(\text{div},\Omega) \times L^2(\Omega)$; the difficulties arising from the anisotropy of the norm are circumvented by utilizing a Helmholtz decomposition of square-integrable tensors.

Inspired by the developments in \cite[Section 4]{MR4041519}, and in view of the assumption that $\mathbf{f}\in \mathbf{H}^{1}(\Omega)$, we apply the $\textbf{curl}$ operator to the  the first equation of Darcy's system \eqref{eq:Darcy_decoupled}, in its strong form, to obtain
\begin{equation*}
\textbf{curl}(\nu(T)\mathbf{u}) = \textbf{curl }\mathbf{f} \quad \text{in }\Omega.
\end{equation*}

Let $K \in \T_h$ and $\gamma \in \mathscr{S}$. We define the element and interelement residuals
\begin{align}
&\mathfrak{R}_{K} := \mathbf{curl}(\mathbf{f}-\nu(T_h)\mathbf{u}_h), 
&& \mathfrak{J}_{\gamma}:= \llbracket (\mathbf{f}-\nu(T_h)\mathbf{u}_h)\cdot\boldsymbol\tau\rrbracket,
\label{eq:residuals_Darcy_1}
\\
& \mathscr{R}_{K}:=\mathbf{f}-\nu(T_h)\mathbf{u}_h-\nabla \mathsf{p}_h, 
&& \mathscr{J}_{\gamma}:= \llbracket \mathsf{p}_h\mathbf{n}\rrbracket.
\label{eq:residuals_Darcy_2}
\end{align}
With these ingredients at hand, we define local error indicators
\begin{align}\label{eq:local_indicator_darcy_I}
\mathfrak{E}_{K}:&=\left(h_K^2\|\mathfrak{R}_{K}\|_{L^2(K)}^2 + h_K\|\mathfrak{J}_{\gamma}\|_{L^2(\partial K\setminus \partial\Omega)}^2\right)^{\frac{1}{2}},
\\
\mathscr{E}_{K}:&=\left(h_K^2\| \mathscr{R}_{K} \|_{\mathbf{L}^2(K)}^2+h_K\|\mathscr{J}_{\gamma}\|_{\mathbf{L}^2(\partial K\setminus \partial\Omega)}^2\right)^{\frac{1}{2}},
\label{eq:local_indicator_darcy_II}
\end{align}
and a posteriori error estimators associated to Darcy's system as
\begin{equation}\label{eq:error_estimators_Darcy}
\mathfrak{E}_{\T}:=\left( \sum_{K\in\mathscr{T}} \mathfrak{E}_{K}^2\right)^{\frac{1}{2}}, \qquad \mathscr{E}_{\T}:=\left( \sum_{K\in\mathscr{T}} \mathscr{E}_{K}^2\right)^{\frac{1}{2}}.
\end{equation}


\subsection{Reliability estimates}\label{sec:reliability_estimates}

In this section, we obtain a global reliability estimate for the total a posteriori error estimator
\begin{equation}\label{def:total_error_estimator}
\mathsf{E}_{\T}:=\mathpzc{E}_{p,\T}+\mathfrak{E}_{\T}+\mathscr{E}_{\T}.
\end{equation}

The following result is instrumental to perform our analysis.

\begin{lemma}[auxiliary result]\label{lemma:aux_curl_result}
Let $\mathbf{v}\in\mathbf{V}(\Omega)$, where $\mathbf{V}(\Omega)$ is defined as in Section \ref{sec:notation}. Then, there exists a unique function $\vartheta\in H_0^1(\Omega)$ such that
\begin{equation*}
\mathbf{v} = \mathbf{curl~}\vartheta,
\qquad
 \|\vartheta\|_{H^1(\Omega)}\lesssim \|\mathbf{v}\|_{\mathbf{L}^2(\Omega)}.
\end{equation*}
The hidden constant is independent of $\vartheta$ and $\mathbf{v}$.
\end{lemma}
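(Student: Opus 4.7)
The plan is to realize $\vartheta$ as the stream function of $\mathbf{v}$ and then use the boundary condition $\mathbf{v}\cdot\mathbf{n}=0$ to land in $H_0^1(\Omega)$. First I would introduce the rotated field $\mathbf{w}:=(-v_2,v_1)\in\mathbf{L}^2(\Omega)$ and observe that, with the conventions of Section \ref{sec:notation}, the scalar curl of $\mathbf{w}$ is $\mathbf{curl}~\mathbf{w}=\partial_{x_1}v_1+\partial_{x_2}v_2=\textnormal{div}~\mathbf{v}=0$ in the distributional sense, i.e. $\mathbf{w}$ is irrotational. Assuming $\Omega$ is simply connected (the standard setting for this stream-function construction on a Lipschitz polygon), a classical de Rham-type result yields a scalar potential $\vartheta\in H^1(\Omega)$, unique up to an additive constant, such that $\nabla\vartheta=\mathbf{w}$ a.e.\ in $\Omega$. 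Unraveling the definition of $\mathbf{w}$ and of the scalar $\mathbf{curl}$ operator, this identity is precisely $\mathbf{v}=\mathbf{curl}~\vartheta$.

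The next step is to fix the additive constant so that $\vartheta$ has zero trace on $\partial\Omega$. Choosing the counterclockwise unit tangent $\boldsymbol{\tau}=(-n_2,n_1)$, a direct computation gives $\mathbf{w}\cdot\boldsymbol{\tau}=v_1 n_1+v_2 n_2=\mathbf{v}\cdot\mathbf{n}$, which vanishes on $\partial\Omega$ because $\mathbf{v}\in\mathbf{H}_0(\textnormal{div},\Omega)$. Transferring this information to the trace of $\vartheta$ via Green's formula, applied to the identity $\nabla\vartheta=\mathbf{w}$ tested against smooth functions and combined with the trace theorem for $\mathbf{H}(\textnormal{div},\Omega)$, one concludes that the tangential derivative of $\vartheta|_{\partial\Omega}$ vanishes in $H^{-1/2}(\partial\Omega)$. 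Since $\Omega$ is simply connected and polygonal, $\partial\Omega$ is connected, so $\vartheta|_{\partial\Omega}$ must be a single constant; subtracting it produces the desired element of $H_0^1(\Omega)$. Uniqueness is then immediate: if $\vartheta_1,\vartheta_2\in H_0^1(\Omega)$ both satisfy $\mathbf{curl}~\vartheta_i=\mathbf{v}$, their difference has zero gradient and zero boundary trace, hence vanishes.

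The norm estimate is direct: by construction $\|\nabla\vartheta\|_{\mathbf{L}^2(\Omega)}=\|\mathbf{w}\|_{\mathbf{L}^2(\Omega)}=\|\mathbf{v}\|_{\mathbf{L}^2(\Omega)}$, and Poincar\'e's inequality upgrades this to $\|\vartheta\|_{H^1(\Omega)}\lesssim\|\mathbf{v}\|_{\mathbf{L}^2(\Omega)}$ with a constant depending only on $\Omega$. The main technical obstacle, in my view, is the rigorous transfer of the low-regularity identity $\mathbf{v}\cdot\mathbf{n}=0$ in $H^{-1/2}(\partial\Omega)$ into the pointwise-on-$\partial\Omega$ statement that $\vartheta$ is constant, since the tangential derivative of an $H^{1/2}(\partial\Omega)$ function only lives in $H^{-1/2}(\partial\Omega)$; this is most cleanly handled by a density argument with smooth vector fields together with the integration-by-parts formula for $\mathbf{H}(\textnormal{div},\Omega)$.
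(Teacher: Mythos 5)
Your construction is correct and coincides with the argument the paper outsources to its references: the existence of a stream function for a divergence-free field with vanishing normal trace is exactly the cited Girault--Raviart result (Chapter I, Theorem 3.1), and the normalization of its boundary trace --- constant on $\partial\Omega$ because the tangential derivative equals $\mathbf{v}\cdot\mathbf{n}=0$ --- to land in $H_0^1(\Omega)$ is the content of the appendix the paper also cites. The only point worth flagging is the one you already identify: the stream-function construction requires $\Omega$ to be simply connected, an assumption the paper inherits silently from the cited theorem rather than stating explicitly.
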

\begin{proof}
The proof of the existence of a function $\vartheta \in H^1(\Omega)$ satisfying the desired properties can be found in \cite[Chapter I, Theorem 3.1]{MR851383}. The correction of $\vartheta \in H^1(\Omega)$ satisfying the needed boundary condition is available in \cite[Appendix A]{MR4041519}.
\end{proof}

As a final preparatory step, we define the discretization errors associated to the temperature, the velocity, and the pressure, respectively, as follows:
\[
e_{T} := T-T_h,
\qquad
\mathbf{e}_{\mathbf{u}} := \mathbf{u}-\mathbf{u}_h,
\qquad
e_{\mathsf{p}} := \mathsf{p}-\mathsf{p}_h.
\]

We are now ready to enunciate and prove the main result of this section.

\begin{theorem}[global reliability]\label{thm:global_reliability}
Let $(\mathbf{u},\mathsf{p},T) \in \mathbf{H}_0(\textnormal{div},\Omega)\times L_0^2(\Omega)\times W_0^{1,p}(\Omega)$ be a solution to \eqref{eq:modelweak} with a forcing term $\mathbf{f}$, which is such that $\|\mathbf{f}\|_{\mathbf{L}^2(\Omega)}\leq \nu_{-}\min\{\mathfrak{C},\tilde{\mathfrak{C}}\}$. Let $(\mathbf{u}_h,\mathsf{p}_h,T_h) \in \mathbf{X}_{h}\times Q_{h}\times V_{h}$ be a solution to the discrete system \eqref{eq:model_discrete} for $0<h \leq h_{\star}$. Assume that
\begin{equation}\label{eq:discrete_assump}
\mathfrak{C}^{-1}\|\mathbf{e}_{\mathbf{u}} \|_{\mathbf{L}^{2}(\Omega)} \leq \rho,
\qquad
0 < \rho < 1,
\end{equation}
where $\mathfrak{C}=(2C_{e}C_{\kappa})^{-1}$, $C_e$ is the best constant in the Sobolev embedding $W_0^{1,p}(\Omega)\hookrightarrow L^{2p/(2-p)}(\Omega)$, and $C_{\kappa}$ is defined as in \eqref{eq:estimatesheat} with $\alpha = \frac{1}{2}$. Assume, in addition, that
\begin{equation}\label{eq:global_rel_assump}
C_{\mathcal{L}}\|\delta_z\|_{W^{-1,p}(\Omega)} \|\mathbf{u}\|_{\mathbf{L}^{p\prime}(\Omega)} \leq\varrho (1-\rho)\nu^{}_{-}\mathfrak{C}^2,\qquad 0<\varrho<1,
\end{equation}
where $\frac{1}{p}+\frac{1}{p'}=1$ and $C_{\mathcal{L}}$ denotes the Lipschitz constant of $\nu$. Then
\begin{equation}\label{eq:upper_bound}
\|\nabla e_{T}\|_{\mathbf{L}^p(\Omega)} + \|\mathbf{e}_{\mathbf{u}}\|_{\mathbf{L}^2(\Omega)}+\|e_{\mathsf{p}}\|_{L^2(\Omega)} 
\lesssim
\mathsf{E}_{\T}.
\end{equation}
Here, the hidden constant is independent of continuous and discrete variables, the size of the elements in the mesh $\mathscr{T}_h$, and $\#\mathscr{T}_h$.
\end{theorem}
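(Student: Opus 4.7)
My plan is to derive three separate a posteriori bounds---one for the temperature, one for the velocity, and one for the pressure---and then use the smallness hypotheses \eqref{eq:discrete_assump}--\eqref{eq:global_rel_assump} to absorb the cross terms. The assumption $\|\mathbf{f}\|_{\mathbf{L}^2(\Omega)}\le \nu_-\min\{\mathfrak{C},\tilde{\mathfrak{C}}\}$ implies $\|\mathbf{u}\|_{\mathbf{L}^2(\Omega)}\le \mathfrak{C}$, so the convection $\mathbf{u}$ meets \eqref{condicion} with $\alpha=\tfrac12$, and Proposition \ref{prop:wp_temp} applied with $\xi=\kappa$ yields an inf-sup estimate for $\nabla e_T$ in $\mathbf{L}^p(\Omega)$. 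Testing the discrete third equation of \eqref{eq:model_discrete} against $S_h=I_hS\in V_h$, subtracting it from the third equation in \eqref{eq:modelweak}, and integrating by parts element-wise produces exactly the residuals $\mathpzc{R}_K$ and $\mathpzc{J}_\gamma$ in \eqref{eq:residual_heat}. The convective mismatch $\int_\Omega T_h\mathbf{e}_\mathbf{u}\cdot\nabla S\,\mathsf{d}\mathbf{x}$ is controlled by \eqref{eq:termconv} and the a priori bound $\|\nabla T_h\|_{\mathbf{L}^p(\Omega)}\lesssim \|\delta_z\|_{W^{-1,p}(\Omega)}$ of Theorem \ref{thm:existence_discrete}, while the point evaluation $\langle\delta_z,S-I_hS\rangle$ is handled through the embedding $W^{1,p'}(K_z)\hookrightarrow L^{\infty}(K_z)$ (valid since $p'>2$), producing an $h_{K_z}^{(2-p)/p}\|\nabla S\|_{\mathbf{L}^{p'}(K_z)}$ factor when $z$ is not a vertex---matching \eqref{eq:local_indicator_temp_I}---and vanishing when it is, matching \eqref{eq:local_indicator_temp_II}. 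Bounding the remaining element/edge contributions via \eqref{eq:interp_Lagrange}--\eqref{eq:interp_Lagrange_side} yields
\begin{equation*}
\|\nabla e_T\|_{\mathbf{L}^p(\Omega)}\lesssim \mathpzc{E}_{p,\T}+\|\mathbf{e}_\mathbf{u}\|_{\mathbf{L}^2(\Omega)}.
\end{equation*}

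For the velocity I exploit that $\text{div}\,\mathbf{X}_h\subset Q_h$ forces $\text{div}\,\mathbf{u}_h=0$, so $\mathbf{e}_\mathbf{u}\in\mathbf{V}(\Omega)$ and Lemma \ref{lemma:aux_curl_result} produces $\vartheta\in H_0^1(\Omega)$ with $\mathbf{e}_\mathbf{u}=\mathbf{curl}\,\vartheta$ and $\|\vartheta\|_{H^1(\Omega)}\lesssim \|\mathbf{e}_\mathbf{u}\|_{\mathbf{L}^2(\Omega)}$. Starting from $\nu_-\|\mathbf{e}_\mathbf{u}\|_{\mathbf{L}^2(\Omega)}^2\le\int_\Omega\nu(T_h)\mathbf{e}_\mathbf{u}\cdot\mathbf{e}_\mathbf{u}\,\mathsf{d}\mathbf{x}$, I substitute $\nu(T_h)\mathbf{u}=\mathbf{f}-\nabla\mathsf{p}+(\nu(T_h)-\nu(T))\mathbf{u}$ from the first equation of \eqref{eq:modelweak}; the pressure-gradient term drops by integration by parts, since $\text{div}\,\mathbf{e}_\mathbf{u}=0$ in $\Omega$ and $\mathbf{e}_\mathbf{u}\cdot\mathbf{n}=0$ on $\partial\Omega$. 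The remaining curl-structured integral $\int_\Omega(\mathbf{f}-\nu(T_h)\mathbf{u}_h)\cdot\mathbf{curl}\,\vartheta\,\mathsf{d}\mathbf{x}$ is turned into residual form by exploiting Galerkin orthogonality with the test $\mathbf{v}_h:=\mathbf{curl}(\Pi_h\vartheta)\in\mathbf{X}_h$, where $\Pi_h$ is a Scott--Zhang operator onto continuous piecewise linears preserving homogeneous boundary data (so that $\text{div}\,\mathbf{v}_h=0$). Element-wise integration by parts of $\int_\Omega(\mathbf{f}-\nu(T_h)\mathbf{u}_h)\cdot\mathbf{curl}(\vartheta-\Pi_h\vartheta)\,\mathsf{d}\mathbf{x}$ generates the residuals $\mathfrak{R}_K$ and $\mathfrak{J}_\gamma$ of \eqref{eq:residuals_Darcy_1}, while the Lipschitz estimate of $\nu$ combined with \eqref{eq:termconv} controls the $(\nu(T_h)-\nu(T))\mathbf{u}$ contribution; after dividing by $\|\mathbf{e}_\mathbf{u}\|_{\mathbf{L}^2(\Omega)}$ this yields
\begin{equation*}
\nu_-\|\mathbf{e}_\mathbf{u}\|_{\mathbf{L}^2(\Omega)}\lesssim \mathfrak{E}_\T + C_{\mathcal{L}}C_e\|\mathbf{u}\|_{\mathbf{L}^{p'}(\Omega)}\|\nabla e_T\|_{\mathbf{L}^p(\Omega)}.
\end{equation*}

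For the pressure, the continuous inf-sup \eqref{eq:infsup} reduces matters to bounding $\int_\Omega e_\mathsf{p}\,\text{div}\,\mathbf{v}\,\mathsf{d}\mathbf{x}$ over $\mathbf{v}\in\mathbf{H}_0(\text{div},\Omega)$; using $\mathbf{v}_h=\mathbf{P}_h\mathbf{v}$ in the discrete Darcy system of \eqref{eq:model_discrete}, subtracting, and integrating by parts on each element (where $\nabla\mathsf{p}_h=\mathbf{0}$ since $\mathsf{p}_h\in Q_h$) exposes the residuals $\mathscr{R}_K$ and $\mathscr{J}_\gamma$ of \eqref{eq:residuals_Darcy_2}. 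Splitting $\nu(T)\mathbf{u}-\nu(T_h)\mathbf{u}_h$ as before gives $\|e_\mathsf{p}\|_{L^2(\Omega)}\lesssim \mathscr{E}_\T+\|\mathbf{e}_\mathbf{u}\|_{\mathbf{L}^2(\Omega)}+\|\mathbf{u}\|_{\mathbf{L}^{p'}(\Omega)}\|\nabla e_T\|_{\mathbf{L}^p(\Omega)}$. Feeding the temperature estimate into the velocity estimate and invoking \eqref{eq:global_rel_assump} to ensure that the coefficient of $\|\mathbf{e}_\mathbf{u}\|_{\mathbf{L}^2(\Omega)}$ on the right-hand side is strictly less than $\nu_-$ absorbs the cross term and gives $\|\mathbf{e}_\mathbf{u}\|_{\mathbf{L}^2(\Omega)}+\|\nabla e_T\|_{\mathbf{L}^p(\Omega)}\lesssim \mathpzc{E}_{p,\T}+\mathfrak{E}_\T$; inserting this in the pressure estimate closes \eqref{eq:upper_bound}.

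The main obstacle is the bookkeeping needed to make the final absorption strict: the a priori bounds $\|\nabla T_h\|_{\mathbf{L}^p(\Omega)}\lesssim \|\delta_z\|_{W^{-1,p}(\Omega)}$ and $\|\mathbf{u}\|_{\mathbf{L}^2(\Omega)}\lesssim \mathfrak{C}$, the Lipschitz constant $C_{\mathcal{L}}$, the Sobolev constant $C_e$, and the inf-sup constant $C_\kappa$ packed inside $\mathfrak{C}$ must align so that \eqref{eq:global_rel_assump} really does translate into strict absorption. A second delicate point is the two-case definition of $\mathpzc{E}_{p,K}$: the extra $h_K^{2-p}$ contribution in \eqref{eq:local_indicator_temp_I} is required only when $z\in K$ is not a vertex, precisely because $(S-I_hS)(z)=0$ whenever $z$ is a Lagrange node.
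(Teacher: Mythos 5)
Your treatment of the temperature and velocity errors follows the paper's argument in all essentials: the same error equations obtained by elementwise integration by parts and Galerkin orthogonality, the same interpolation estimates (Scott--Zhang in place of Cl\'ement is immaterial), the same absorption of the cross terms via \eqref{eq:discrete_assump} and \eqref{eq:global_rel_assump}, and the same explanation of the two-case indicator through $(S-I_hS)(z)=0$ at Lagrange nodes together with the scaling $\|S-I_hS\|_{L^\infty(K)}\lesssim h_K^{(2-p)/p}\|\nabla S\|_{\mathbf{L}^{p'}(K)}$. Testing directly with $\mathbf{v}=\mathbf{e}_{\mathbf{u}}=\mathbf{curl}\,\vartheta$ instead of an arbitrary $\mathbf{v}\in\mathbf{V}(\Omega)$, and substituting the velocity bound into the temperature bound rather than the other way around, are cosmetic differences.

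The pressure step, however, has a genuine gap as written. You invoke the inf-sup condition \eqref{eq:infsup} between $\mathbf{H}_0(\textnormal{div},\Omega)$ and $L_0^2(\Omega)$, which only supplies a test function $\mathbf{v}\in\mathbf{H}_0(\textnormal{div},\Omega)$. For such a $\mathbf{v}$ the operator $\mathbf{P}_h$ is not defined (it requires $\mathbf{v}\in\mathbf{H}^1(\Omega)$), the estimates \eqref{eq:interp_RT_i} and \eqref{eq:interp_RT_iii} are unavailable, and, more fundamentally, the edge terms $\int_\gamma \mathscr{J}_{\gamma}\cdot(\mathbf{v}-\mathbf{P}_h\mathbf{v})\,\mathsf{d}s$ with $\mathscr{J}_{\gamma}=\llbracket \mathsf{p}_h\mathbf{n}\rrbracket$ are meaningless, because elements of $\mathbf{H}(\textnormal{div},\Omega)$ possess only an $H^{-1/2}$ normal trace and no $L^2$ trace on interior edges; this is precisely the first of the two obstructions the paper singles out for a posteriori estimation in the $\mathbf{H}(\textnormal{div},\Omega)\times L^2(\Omega)$ norm. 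The repair is to use instead the inf-sup condition between $\mathbf{H}_0^1(\Omega)$ and $L_0^2(\Omega)$, which yields $\mathbf{v}\in\mathbf{H}_0^1(\Omega)\subset\mathbf{H}_0(\textnormal{div},\Omega)$ with $\|e_{\mathsf{p}}\|_{L^2(\Omega)}^2=\int_\Omega e_{\mathsf{p}}\,\textnormal{div}\,\mathbf{v}\,\mathsf{d}\mathbf{x}$ and $\|\nabla\mathbf{v}\|_{\mathbf{L}^2(\Omega)}\lesssim\|e_{\mathsf{p}}\|_{L^2(\Omega)}$, as in \eqref{eq:e_p_tilde_v}; with this $H^1$ test function the identity \eqref{eq:error_equation_darcy_2} and the interpolation bounds apply, and the rest of your argument goes through unchanged.
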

\begin{proof} 
We begin by controlling the temperature error $\|\nabla e_{T}\|_{\mathbf{L}^p(\Omega)}$. To accomplish this task, we invoke equation \eqref{eq:heat_decoupled}, an elementwise integration by parts formula, and Galerkin orthogonality to obtain
\begin{multline}
\label{eq:error_equation_temp}
\int_\Omega \left( \kappa\nabla e_{T} + e_{T} \mathbf{e}_\mathbf{u}  -e_T \mathbf{u}  - T\mathbf{e}_{\mathbf{u}}\right)\cdot\nabla S\mathsf{d}\mathbf{x} = \langle \delta_{z},S-I_{h}S\rangle \\
+\sum_{K\in\mathscr{T}}\int_{K}\mathpzc{R}_K(S-I_{h}S)\mathsf{d}{\mathbf{x}} 
+ 
\sum_{\gamma\in\mathscr{S}}\int_\gamma \mathpzc{J}_{\gamma}(S-I_{h}S)\mathsf{d}s
\end{multline}
for every $S\in W_0^{1,p\prime}(\Omega)$. The element and interelement residuals, $\mathpzc{R}_K$ and $\mathpzc{J}_{\gamma}$, respectively, are defined in \eqref{eq:residual_heat}. Hence, the inf-sup condition \eqref{eq:inf_sup_Poisson}, H\"older's inequality, and the local interpolation bounds \eqref{eq:interp_Lagrange} and \eqref{eq:interp_Lagrange_side} yield
\begin{multline*}
\|\nabla e_{T}\|_{\mathbf{L}^{p}(\Omega)}
\leq C_{\kappa}\sup_{S\in W_0^{1,p'}(\Omega)}\frac{1}{\|\nabla S\|_{\mathbf{L}^{p'}(\Omega)}}\int_\Omega\kappa\nabla e_{T}\cdot \nabla S\mathsf{d}\mathbf{x} 
\\ 
\leq C_{\kappa} \left( \|e_{T}\|_{L^{\frac{2p}{2-p}}(\Omega)}( \|\mathbf{e}_{\mathbf{u}}\|_{\mathbf{L}^2(\Omega)} + \|\mathbf{u}\|_{\mathbf{L}^2(\Omega)} ) +  \|T\|_{L^{\frac{2p}{2-p}}(\Omega)} \|\mathbf{e}_{\mathbf{u}}\|_{\mathbf{L}^2(\Omega)} + C\mathpzc{E}_{p,\T} \right),
\end{multline*} 
where $C>0$. In view of this estimate, the embedding $W_0^{1,p}(\Omega)\hookrightarrow L^{\frac{2p}{2-p}}(\Omega)$, the bound $\|\mathbf{u}\|_{\mathbf{L}^2(\Omega)}\leq \mathfrak{C}=(2C_{e}C_{\kappa})^{-1}$, and the assumption \eqref{eq:discrete_assump}, we deduce that
\begin{equation}\label{eq:temp_estimate}
(1-\rho)\|\nabla e_{T}\|_{\mathbf{L}^{p}(\Omega)} 
\leq 
4C_{\kappa}^{2}C_{e} \|\delta_z\|_{W^{-1,p}(\Omega)}\|\mathbf{e}_{\mathbf{u}}\|_{\mathbf{L}^2(\Omega)} + 2CC_{\kappa}\mathpzc{E}_{p,\T},
\end{equation}
where $C_e$ denotes the best constant in the embedding $W_0^{1,p}(\Omega)\hookrightarrow L^{\frac{2p}{2-p}}(\Omega)$ and $C_{\kappa}$ is defined as in \eqref{eq:estimatesheat} with $\alpha = \frac{1}{2}$.

We now control the velocity error. 
Let $\mathbf{v} \in \mathbf{V}(\Omega)$, where $\mathbf{V}(\Omega)$ is defined in Section \ref{sec:notation}. 
An application of Lemma \ref{lemma:aux_curl_result} yields the existence of a unique function $\vartheta\in H_0^1(\Omega)$ such that $\mathbf{v} = \mathbf{curl~}\vartheta$ and $ \|\vartheta\|_{H^1(\Omega)}\lesssim \|\mathbf{v}\|_{\mathbf{L}^2(\Omega)}$. 
Utilize the pair $(\mathbf{v},0)$ as a test pair in problem \eqref{eq:Darcy_decoupled} to deduce, in view of an elementwise integration by parts formula based on \cite[Theorem 2.11, Chapter I]{MR851383} and Galerkin orthogonality, the identity 
\begin{multline}\label{eq:error_equation_darcy_1}
\int_\Omega \Big(\nu(T_h)\mathbf{e}_{\mathbf{u}}+(\nu(T)-\nu(T_h))\mathbf{u}\Big)\cdot\mathbf{v}\mathsf{d}\mathbf{x} \\
= \sum_{K\in\mathscr{T}}\int_K \mathfrak{R}_{K}(\vartheta-\mathcal{I}_{h}\vartheta)\mathsf{d}\mathbf{x} 
+ \sum_{\gamma\in\mathscr{S}}\int_\gamma \mathfrak{J}_{\gamma}(\vartheta-\mathcal{I}_{h}\vartheta)\mathsf{d}s.
\end{multline}
Here, $\mathcal{I}_{h}:H_0^1(\Omega) \to V_h$ denotes the Cl\'ement interpolation operator \cite{MR2373954,MR0520174}. We recall that the element and interelement residuals, $\mathfrak{R}_{K}$ and $\mathfrak{J}_{\gamma}$, respectively, are defined in \eqref{eq:residuals_Darcy_1}. Since $\nu$ is uniformly bounded, namely, $\nu$ satisfies \eqref{eq:nu}, \eqref{eq:error_equation_darcy_1} in combination with H\"older's inequality, standard interpolation error estimates for $\mathcal{I}_{h}$, and the estimate $\|\vartheta\|_{H^1(\Omega)}\lesssim \|\mathbf{v}\|_{\mathbf{L}^2(\Omega)}$, which follows from Lemma \ref{lemma:aux_curl_result}, allow us to obtain that
\begin{equation}\label{eq:estimate_vel}
\nu_{-}\|\mathbf{e}_{\mathbf{u}}\|_{\mathbf{L}^2(\Omega)}\leq C_{\mathcal{L}}C_{e}\|\nabla e_{T}\|_{\mathbf{L}^p(\Omega)}\|\mathbf{u}\|_{\mathbf{L}^{p\prime}(\Omega)} + C\mathfrak{E}_{\T}, 
\qquad
C>0.
\end{equation}
Replacing this estimate into \eqref{eq:temp_estimate} immediately yields
$\|\nabla e_{T}\|_{\mathbf{L}^{p}(\Omega)} \lesssim \mathpzc{E}_{p,\T}+\mathfrak{E}_{\T}$, upon utilizing assumption \eqref{eq:global_rel_assump}. This bound combined with estimate \eqref{eq:estimate_vel} and assumption \eqref{eq:global_rel_assump} yield the a posteriori error estimate $\|\mathbf{e}_{\mathbf{u}}\|_{\mathbf{L}^2(\Omega)} \lesssim \mathpzc{E}_{p,\T}+\mathfrak{E}_{\T}$.

We finally control the pressure error.
Since $e_{\mathsf{p}}\in L_0^2(\Omega)$, we invoke the inf-sup condition between $\mathbf{H}_0^1(\Omega)$ and $L_0^2(\Omega)$ \cite[Corollary 2.4, Chapter I]{MR851383}, \cite[Corollary B.71]{Guermond-Ern} to conclude the existence of $\mathbf{v} \in \mathbf{H}_0^1(\Omega)\subset \mathbf{H}_{0}(\text{div},\Omega)$ such that 
\begin{equation}\label{eq:e_p_tilde_v}
\|e_{\mathsf{p}}\|_{L^2(\Omega)}^2 = \int_\Omega e_{\mathsf{p}}\text{ div } \mathbf{v} \mathsf{d}\mathbf{x}, \qquad \|\nabla \mathbf{v} \|_{\mathbf{L}^2(\Omega)} \lesssim  \|e_{p}\|_{L^2(\Omega)}.
\end{equation}
Set $(\mathbf{v},0)$ as a test pair in Darcy's problem \eqref{eq:Darcy_decoupled} and utilize an elementwise integration by parts formula to obtain 
\begin{multline}\label{eq:error_equation_darcy_2}
\int_\Omega \nu(T_h)\mathbf{e}_{\mathbf{u}}\cdot \mathbf{v} \mathsf{d}\mathbf{x} 
+ 
\int_\Omega (\nu(T)-\nu(T_h))\mathbf{u}\cdot \mathbf{v} \mathsf{d}\mathbf{x} 
- 
\int_\Omega e_{\mathsf{p}}\text{ div } \mathbf{v} \mathsf{d}\mathbf{x} \\
= 
\sum_{K\in\mathscr{T}}\int_K \mathscr{R}_{K}\cdot(\mathbf{v}-\mathbf{P}_{h}\mathbf{v})\mathsf{d}\mathbf{x} 
- 
\sum_{\gamma\in\mathscr{S}}\int_\gamma \mathscr{J}_{\gamma}\cdot(\mathbf{v}-\mathbf{P}_{h} \mathbf{v})\mathsf{d}s,
\end{multline}
where $\mathbf{P}_h: \mathbf{H}^1(\Omega)\cap \mathbf{H}_{0}(\text{div},\Omega)\to \mathbf{X}_{h}$ denotes the interpolation operator introduced in Section \ref{sec:fem}. On the basis of the identities \eqref{eq:e_p_tilde_v} and \eqref{eq:error_equation_darcy_2}, H\"older's inequality, the Lipschitz property of $\nu$, the embedding $W_0^{1,p}(\Omega)\hookrightarrow L^{2p/(2-p)}(\Omega)$, the interpolation error estimates \eqref{eq:interp_RT_i} and \eqref{eq:interp_RT_iii}, and assumption \eqref{eq:global_rel_assump}, we obtain the estimate
\begin{equation*}
\|e_{\mathsf{p}}\|_{L^2(\Omega)}
\lesssim 
\|\mathbf{e}_{\mathbf{u}}\|_{\mathbf{L}^2(\Omega)}+ \|\nabla e_{T}\|_{\mathbf{L}^p(\Omega)}  + \mathscr{E}_{\T}.
\end{equation*}

This estimate and the bound $\|\mathbf{e}_{\mathbf{u}}\|_{\mathbf{L}^2(\Omega)}+ \|\nabla e_{T}\|_{\mathbf{L}^p(\Omega)}\lesssim \mathpzc{E}_{p,\T}+\mathfrak{E}_{\T}$, which follows from \eqref{eq:temp_estimate} and \eqref{eq:estimate_vel}, allow us to conclude the desired a posteriori error estimate \eqref{eq:upper_bound}.
\end{proof}


\subsection{Efficiency estimates}\label{sec:efficiency_estimates}
In this section, we study efficiency properties of the a posteriori error estimator $\mathsf{E}_{\T}$, defined in \eqref{def:total_error_estimator}, by examining each of its contributions separately. To accomplish this task, we will invoke standard residual estimation techniques which are based on the consideration of suitable bubble functions \cite{MR3059294,MR1885308}. Before proceeding with such an analysis, we introduce the following notation: for an edge or triangle $G$, let $\mathcal{V}(G)$ be the set of vertices of $G$. With this notation at hand, we define, for  $K\in\mathscr{T}_h$ and $\gamma\in\mathscr{S}$, the standard \textit{element} and \textit{edge} bubble functions
\begin{equation}\label{def:standard_bubbles}
\varphi^{}_{K}=
27\prod_{\textsc{v} \in \mathcal{V}(K)} \lambda^{}_{\textsc{v}},
\qquad
\varphi^{}_{\gamma}=
4 \prod_{\textsc{v} \in \mathcal{V}(\gamma)}\lambda^{}_{\textsc{v}}|^{}_{K'},
\quad 
K' \subset \mathcal{N}_{\gamma},
\end{equation}
respectively, where $\lambda_{\textsc{v}}$ are the barycentric coordinates of $K$. We recall that $\mathcal{N}_{\gamma}$ corresponds to the patch composed of the two elements of $\mathscr{T}_h$ sharing $\gamma$.

Inspired by references \cite{MR3237857,MR2262756}, we also introduce some suitable bubble functions which are particularly useful for analyzing the indicators associated to the discretization of the heat equation with convection \eqref{eq:heat_decoupled}. Given $K\in\mathscr{T}_h$, we define the element bubble function $\phi_K$ as
\begin{equation}\label{def:bubble_element}
\phi_K(\mathbf{x}):=
h_K^{-2}|\mathbf{x}-z|^{2}\varphi_K(\mathbf{x})  \text{  if  } z\in K,
\qquad
\phi_K(\mathbf{x}):=
\varphi_K(\mathbf{x}) \text{  if  } z\not\in K.
\end{equation}
Given $\gamma\in \mathscr{S}$, we define the edge bubble function $\phi_\gamma$ as
\begin{equation}\label{def:bubble_side}
\phi_\gamma(\mathbf{x}):=
h_\gamma^{-2}|\mathbf{x}-z|^{2} \varphi_\gamma(\mathbf{x})  \text{ if } z\in \mathring{\mathcal{N}}_\gamma,
\qquad
\phi_\gamma(\mathbf{x}):=
\varphi_\gamma(\mathbf{x}) \text{ if } z\not\in \mathring{\mathcal{N}}_\gamma,
\end{equation}
where $\mathring{\mathcal{N}}_\gamma$ denotes the interior of $\mathcal{N}_\gamma$. We recall that the Dirac measure $\delta_{z}$ is supported at $z \in \Omega$: it can thus be supported on the interior, an edge, or a vertex of an element $K$ of the triangulation $\mathscr{T}_h$.

Given $\gamma\in\mathscr{S}$, we introduce the continuation operator $\Pi_{\gamma}:L^\infty(\gamma)\rightarrow L^\infty(\mathcal{N}_\gamma)$ defined in \cite[Section 3]{MR1650051}. This operator maps polynomials onto piecewise polynomials of the same degree and it will be useful for controlling the involved \emph{jump} terms.

We now provide the following result \cite[Lemmas 3.1 and 3.2]{MR3237857}. 

\begin{lemma}[bubble function properties]\label{lemma:bubble_properties}
Let $K\in\mathscr{T}_h$, $\gamma\in\mathscr{S}$, and $r\in(1,\infty)$. If $S_{h}|^{}_K\in\mathbb{P}_1(K)$ and $R_{h}|^{}_\gamma\in \mathbb{P}_1(\gamma)$, then
\begin{align*}
\|S_{h}\|_{L^r(K)} &\lesssim \|S_{h} \phi_K^{\frac{1}{r}}\|_{L^r(K)} \lesssim \|S_{h}\|_{L^r(K)}, 
\\
\|R_{h}\|_{L^r(\gamma)} &\lesssim \|R_{h} \phi_\gamma^{\frac{1}{r}}\|_{L^r(\gamma)} \lesssim \|R_{h}\|_{L^r(\gamma)},
\\
\|\phi_\gamma \Pi_{\gamma} (R_{h})\|_{L^r(\mathcal{N}_{\gamma})} & \lesssim h_{\gamma}^{\frac{1}{r}}\|R_{h}\|_{L^r(\gamma)}.
\end{align*}
\end{lemma}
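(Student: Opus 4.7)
The plan is a standard scaling/norm-equivalence argument on a reference configuration, with particular care for the additional weight $h^{-2}|\mathbf{x}-z|^2$ that appears in the modified bubbles $\phi_K$ and $\phi_\gamma$ when $z$ lies in $K$ (respectively in $\mathring{\mathcal{N}}_\gamma$). Since $\varphi_K$ and $\varphi_\gamma$ are normalized products of barycentric coordinates, they satisfy $0\le \varphi_K,\varphi_\gamma\le 1$. In the singular case, the inequality $|\mathbf{x}-z|\le h_K$ for all $\mathbf{x}\in K$, and $|\mathbf{x}-z|\lesssim h_\gamma$ on $\mathcal{N}_\gamma$ (by shape regularity), yields $h_K^{-2}|\mathbf{x}-z|^2,\,h_\gamma^{-2}|\mathbf{x}-z|^2\lesssim 1$. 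Hence $\phi_K,\phi_\gamma\lesssim 1$ uniformly in $z$, which immediately gives the upper bounds in the first two inequalities.

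For the lower bounds, I would pull back to a fixed reference element $\hat K$ via an affine bijection $F_K:\hat K\to K$. Under this map, $S_h$ becomes $\hat S\in \mathbb{P}_1(\hat K)$ and $\phi_K$ becomes a weight $\hat\phi(\,\cdot\,;\hat z)$ on $\hat K$ parameterized by $\hat z:=F_K^{-1}(z)\in\hat K$. The key observation is that $\hat\phi(\,\cdot\,;\hat z)$ is strictly positive away from $\partial \hat K\cup\{\hat z\}$, a set of measure zero, so the functional
\[
\hat S\longmapsto\Bigl(\int_{\hat K}|\hat S|^{r}\,\hat\phi(\,\cdot\,;\hat z)\,d\hat{\mathbf{x}}\Bigr)^{1/r}
\]
defines a norm on the finite-dimensional space $\mathbb{P}_1(\hat K)$, equivalent to the standard $L^r(\hat K)$ norm. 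Scaling back to $K$ recovers the lower bound. The edge case is handled analogously on a reference patch $\hat{\mathcal{N}}_\gamma$, using that $\phi_\gamma(\,\cdot\,;\hat z)$ restricted to the reference edge remains strictly positive off a null set.

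For the third inequality I would combine the uniform bound $\phi_\gamma\lesssim 1$ established above with the scaling property of the continuation operator $\Pi_\gamma$ of \cite[Section 3]{MR1650051}, namely $\|\Pi_\gamma R_h\|_{L^r(\mathcal{N}_\gamma)}\lesssim h_\gamma^{1/r}\|R_h\|_{L^r(\gamma)}$, which itself follows from a standard scaling argument to the reference patch together with norm equivalence on $\mathbb{P}_1$.

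The main obstacle is the uniformity of the norm-equivalence constants with respect to the position of $\hat z$ within $\hat K$ (or $\hat{\mathcal N}_\gamma$); this is the only point at which the argument departs from the textbook treatment of classical bubble functions. The obstruction is resolved by continuity of the equivalence constants in $\hat z$ together with compactness of $\hat K$: the constants depend continuously on $\hat z$ and are bounded away from $0$ and $\infty$ on the compact reference set, which, after scaling, yields a bound independent of $z$ and of the mesh parameter.
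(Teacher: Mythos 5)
Your argument is correct, and it is essentially the canonical proof of this result: the paper itself does not prove the lemma but imports it verbatim from Lemmas 3.1 and 3.2 of the cited reference, where it is established by exactly the route you describe (pointwise bound $\phi_K,\phi_\gamma\lesssim 1$ for the upper estimates, and pull-back to a reference configuration plus finite-dimensional norm equivalence, made uniform in the position of $\hat z$ by joint continuity and compactness, for the lower ones). Nothing further is needed.
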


\subsubsection{Local estimates for $\mathpzc{E}_{p,K}$}

We now investigate local estimates for the local error indicators $\mathpzc{E}_{p,K}$ defined in \eqref{eq:local_indicator_temp_I} and \eqref{eq:local_indicator_temp_II}.

\begin{theorem}[local estimate for $\mathpzc{E}_{p,K}$]\label{thm:efficiency}
 Let $(\mathbf{u},\mathsf{p},T) \!\in\! \mathbf{H}_0(\textnormal{div},\Omega)\times L_0^2(\Omega)\times W_0^{1,p}(\Omega)$ be a solution to \eqref{eq:modelweak} with a forcing term $\mathbf{f}$, which is such that $\|\mathbf{f}\|_{\mathbf{L}^2(\Omega)}\leq \nu_{-}\min\{\mathfrak{C},\tilde{\mathfrak{C}}\}$. Let $(\mathbf{u}_h,\mathsf{p}_h,T_h)\in\mathbf{X}_{h}\times Q_{h}\times V_{h}$ be a solution to the discrete system \eqref{eq:model_discrete} for $0< h \leq h_{\star}$. Then, for $K\in \mathscr{T}_h$, the local indicator $\mathpzc{E}_{p,K}$ satisfies the bound
\begin{equation}\label{eq:local_efficiency_temp}
\mathpzc{E}_{p,K}
\lesssim
\|\nabla e_{T}\|_{\mathbf{L}^p(\mathcal{N}_K^{*})} + \|e_{T}\|_{L^{\frac{2p}{2-p}}(\mathcal{N}_K^{*})} + \|\mathbf{e}_{\mathbf{u}}\|_{\mathbf{L}^2(\mathcal{N}_K^{*})},
\end{equation}
where $\mathcal{N}_{K}^{*}$ is defined in \eqref{eq:patch}. The hidden constant is independent of continuous and discrete solutions $(\mathbf{u},\mathsf{p},T)$ and $(\mathbf{u}_h,\mathsf{p}_h,T_h)$, respectively, the size of the elements in the mesh $\mathscr{T}_h$, and $\#\mathscr{T}_h$.
\end{theorem}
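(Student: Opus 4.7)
The plan is to bound the three possible contributions to $\mathpzc{E}_{p,K}$ (the element residual $h_K\|\mathpzc{R}_K\|_{L^p(K)}$, the interelement residual $h_K^{1/p}\|\mathpzc{J}_\gamma\|_{L^p(\partial K\setminus \partial\Omega)}$, and, when $z\in K$ is not a vertex, the Dirac contribution $h_K^{(2-p)/p}$) one at a time, by testing the error equation \eqref{eq:error_equation_temp} against a carefully chosen bubble-type function and then invoking Lemma \ref{lemma:bubble_properties}, H\"older's inequality, and inverse estimates. Throughout, the uniform bounds on $\|\mathbf{u}\|_{\mathbf{L}^2(\Omega)}$ and $\|T\|_{L^{2p/(2-p)}(\Omega)}$ provided by Theorem \ref{thm:existence} (together with the smallness of $\mathbf{f}$) are absorbed into the hidden constant so that the right-hand side matches the error norms stated in \eqref{eq:local_efficiency_temp}.

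For the element residual, I would set $\psi_K := \phi_K|\mathpzc{R}_K|^{p-2}\mathpzc{R}_K$ on $K$ and extend by zero. Since $\phi_K\equiv 0$ on $\partial K$, $\psi_K\in W_0^{1,p'}(\Omega)$; since $\phi_K$ vanishes at all vertices of $K$, $I_h\psi_K\equiv 0$; and crucially $\psi_K(z)=0$ because of the factor $|x-z|^2$ built into $\phi_K$ in \eqref{def:bubble_element}. Plugging $S=\psi_K$ into \eqref{eq:error_equation_temp} annihilates both the Dirac pairing and the jump contributions (as $\psi_K$ vanishes on $\partial K$), leaving
\[
\int_K \mathpzc{R}_K\psi_K\,\mathsf{d}\mathbf{x}
= \int_K\bigl(\kappa\nabla e_T + e_T\mathbf{e}_{\mathbf{u}} - e_T\mathbf{u} - T\mathbf{e}_{\mathbf{u}}\bigr)\cdot\nabla\psi_K\,\mathsf{d}\mathbf{x}.
\]
Lemma \ref{lemma:bubble_properties} gives $\|\mathpzc{R}_K\|_{L^p(K)}^p\lesssim \int_K \mathpzc{R}_K\psi_K\,\mathsf{d}\mathbf{x}$, while H\"older's inequality, the embedding $W_0^{1,p}\hookrightarrow L^{2p/(2-p)}$, and the inverse estimate $\|\nabla\psi_K\|_{L^{p'}(K)}\lesssim h_K^{-1}\|\mathpzc{R}_K\|_{L^p(K)}^{p-1}$ control the right-hand side. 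Dividing by $\|\mathpzc{R}_K\|_{L^p(K)}^{p-1}$ delivers the bound on $h_K\|\mathpzc{R}_K\|_{L^p(K)}$. The jump term is treated analogously using $\chi_\gamma := \phi_\gamma|\Pi_\gamma\mathpzc{J}_\gamma|^{p-2}\Pi_\gamma\mathpzc{J}_\gamma$ on $\mathcal{N}_\gamma$, zero elsewhere: the same three properties hold (in particular $\chi_\gamma(z)=0$ because of the modification in \eqref{def:bubble_side}), so \eqref{eq:error_equation_temp} isolates the edge term and the element residuals on $\mathcal{N}_\gamma$, and combining the scaling estimates with the already-proven element bound on each $K'\subset \mathcal{N}_\gamma$ gives $h_K^{1/p}\|\mathpzc{J}_\gamma\|_{L^p(\gamma)}\lesssim$ errors on $\mathcal{N}_\gamma\subset \mathcal{N}_K^*$.

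The main obstacle is the Dirac contribution $h_K^{(2-p)/p}$ that appears only in case \eqref{eq:local_indicator_temp_I}. To attack it I would construct, following the spirit of \cite{MR3237857,MR2262756}, a test function $\omega_z \in W_0^{1,p'}(\Omega)$ supported in (a portion of) $\mathcal{N}_K^*$, with $\omega_z(z)=1$ and $\omega_z$ vanishing at every vertex of $\mathscr{T}_h$, so that $I_h\omega_z\equiv 0$ and the Dirac pairing $\langle\delta_z,\omega_z-I_h\omega_z\rangle=1$ survives. Normalizing $\omega_z$ at a geometric scale comparable to $h_K$ yields $\|\nabla\omega_z\|_{\mathbf{L}^{p'}(\Omega)}\lesssim h_K^{-(2-p)/p}$ and $\|\omega_z\|_{L^{p'}(\Omega)}\lesssim h_K^{2/p'}$. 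Substituting $\omega_z$ into \eqref{eq:error_equation_temp} and using H\"older together with the element/jump bounds just established, one obtains
\[
1 \lesssim \|\nabla e_T\|_{\mathbf{L}^p(\mathcal{N}_K^*)}\,h_K^{-(2-p)/p} + (\text{lower-order terms already controlled}),
\]
and multiplying by $h_K^{(2-p)/p}$ gives the desired bound. The subtle point, and the genuine technical difficulty, is building $\omega_z$ with the norm estimates on scale $h_K$ \emph{uniformly in the geometric position of $z$}, in particular when $z$ lies close to a mesh vertex or to an edge of $K$: a naive bubble inside $K$ scales only like $\mathrm{dist}(z,\partial K)$, which can be arbitrarily smaller than $h_K$. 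This is precisely where the enlarged patch $\mathcal{N}_K^*$ (rather than $K$ alone) enters: by distributing the support of $\omega_z$ across neighboring elements sharing a vertex with $K$, one recovers a working scale $\sim h_K$ for every location of $z$ compatible with the first case of \eqref{eq:local_indicator_temp_I}, closing the argument.
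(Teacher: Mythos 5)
Your overall architecture coincides with the paper's: a three-step bubble-function argument in which the weighted bubbles \eqref{def:bubble_element}--\eqref{def:bubble_side} annihilate the Dirac pairing when estimating the element and jump residuals, followed by a separate cut-off function with value $1$ at $z$ and the scalings $\|\nabla\omega_z\|_{\mathbf{L}^{p\prime}(\mathcal{N}_K^*)}\lesssim h_K^{2/p\prime-1}$ and $\|\omega_z\|_{L^{p\prime}(\mathcal{N}_K^*)}\lesssim h_K^{2/p\prime}$ to recover the term $h_K^{(2-p)/p}$. Your Step 3 is essentially the paper's (the function $\mu$ of \eqref{eq:properties_bubble_mu}--\eqref{eq:properties_bubble_mu_2} from \cite{MR2262756}), including the correct observation that the support must be spread over $\mathcal{N}_K^*$ to obtain the scale $h_K$ uniformly in the position of $z$.

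The gap lies in your treatment of the element and jump residuals. The test function $\psi_K=\phi_K|\mathpzc{R}_K|^{p-2}\mathpzc{R}_K$ is not admissible: $\mathpzc{R}_K=(-\nabla T_h\cdot\mathbf{u}_h-T_h\,\textnormal{div}\,\mathbf{u}_h)|_K$ is an affine function that generically changes sign on $K$, and near its zero line the gradient of $|\mathpzc{R}_K|^{p-2}\mathpzc{R}_K$ behaves like $d^{\,p-2}$, where $d$ denotes the distance to that line. Consequently $\nabla\psi_K\in\mathbf{L}^{p\prime}(K)$ requires $(p-2)p\prime>-1$, i.e., $p>(1+\sqrt{5})/2\approx 1.618$, so for most of the admissible range (and for three of the four values of $p$ used in the numerical experiments) $\psi_K\notin W_0^{1,p\prime}(\Omega)$ and cannot be inserted into \eqref{eq:error_equation_temp}. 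Even when the integral is finite, the claimed bound $\|\nabla\psi_K\|_{\mathbf{L}^{p\prime}(K)}\lesssim h_K^{-1}\|\mathpzc{R}_K\|_{L^p(K)}^{p-1}$ is not a standard inverse inequality, because $\psi_K$ is not piecewise polynomial, and its constant would depend on how degenerate the zero set of $\mathpzc{R}_K$ is inside $K$; the same objection applies to $\chi_\gamma$. The paper avoids this entirely: it tests with the polynomial-times-bubble $S=\phi_K\mathpzc{R}_K$ (resp.\ $\phi_\gamma\Pi_\gamma(\mathpzc{J}_\gamma)$), obtains the lower bound for $\|\mathpzc{R}_K\|_{L^2(K)}^2$ from Lemma \ref{lemma:bubble_properties} with $r=2$, and converts between the $L^2$ and $L^p$ norms of the polynomial residual via $h_K^{1-2/p}\|\mathpzc{R}_K\|_{L^p(K)}\lesssim\|\mathpzc{R}_K\|_{L^2(K)}$ together with $\|\nabla(\phi_K\mathpzc{R}_K)\|_{\mathbf{L}^{p\prime}(K)}\lesssim h_K^{2/p\prime-2}\|\mathpzc{R}_K\|_{L^2(K)}$; the powers of $h_K$ then cancel exactly. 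Your Steps 1 and 2 should be rewritten along these lines.
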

\begin{proof}
We begin by noticing that similar arguments to the ones used to derive \eqref{eq:error_equation_temp} yield the identity
\begin{equation}\label{eq:error_equation_temp_2}
\int_\Omega (\kappa\nabla e_{T}\cdot\nabla S - T\mathbf{e}_{\mathbf{u}} \cdot \nabla S - e_{T}\mathbf{u}_h \cdot \nabla S)\mathsf{d}\mathbf{x} 
=\langle \delta_{z},S\rangle  +\sum_{K\in\mathscr{T}}\int_{K}\mathpzc{R}_K S\mathsf{d}{\mathbf{x}}  
+
\sum_{\gamma\in\mathscr{S}}\int_\gamma \mathpzc{J}_{\gamma} S \mathsf{d}s,
\end{equation}
for every $S\in W_0^{1,p\prime}(\Omega)$.

On the basis of identity \eqref{eq:error_equation_temp_2}, we proceed in three steps.

\underline{Step 1.} Let $K\in\mathscr{T}_h$. In what follows, we bound the term $h_K\|\mathpzc{R}_K\|_{L^p(K)}$ in \eqref{eq:local_indicator_temp_I}--\eqref{eq:local_indicator_temp_II}. To accomplish this task, we set $S=\phi_K \mathpzc{R}_K$ in \eqref{eq:error_equation_temp_2}, where $\phi_K$ is the bubble function defined in \eqref{def:bubble_element}. Since $\phi_K$ is such that $\langle \delta_{z}, \phi_K\mathpzc{R}_K \rangle = 0$, we thus obtain
\begin{multline*}
\|\mathpzc{R}_K\|_{L^2(K)}^2
\lesssim 
(\|\nabla e_{T}\|_{\mathbf{L}^p(K)} + \|T\|_{L^{\frac{2p}{2-p}}(K)} \|\mathbf{e}_{\mathbf{u}}\|_{\mathbf{L}^2(K)}  \\
+ \|e_{T}\|_{L^{\frac{2p}{2-p}}(K)}\|\mathbf{u}_h\|_{\mathbf{L}^2(K)})\|\nabla(\phi_K\mathpzc{R}_K)\|_{\mathbf{L}^{p\prime}(K)},
\end{multline*}
where we have used the first estimate in Lemma \ref{lemma:bubble_properties}. To control $\|\nabla(\phi_K\mathpzc{R}_K)\|_{\mathbf{L}^{p\prime}(K)}$, we utilize standard inverse estimates \cite[Lemma 4.5.3]{MR2373954} and properties of $\phi_K$ to obtain 
\[
 \|\nabla(\phi_K\mathpzc{R}_K)\|_{\mathbf{L}^{p\prime}(K)}\lesssim h_K^{-1}\|\phi_K \mathpzc{R}_K\|_{L^{p\prime}(K)}\lesssim h_K^{\frac{2}{p\prime}-2}\|\mathpzc{R}_K\|_{L^{2}(K)}.
\]
In view of this bound, the estimate $h_{K}^{1-\frac{2}{p}}\|\mathpzc{R}_K\|_{L^p(K)}\lesssim \|\mathpzc{R}_K\|_{L^2(K)}$ yields
\begin{equation}\label{eq:residual_estimate_temp}
h_K\|\mathpzc{R}_K\|_{L^p(K)}
\lesssim 
\|\nabla e_{T}\|_{\mathbf{L}^p(K)} + \|e_{T}\|_{L^{\frac{2p}{2-p}}(K)} +\|\mathbf{e}_{\mathbf{u}}\|_{\mathbf{L}^2(K)},
\end{equation}
upon utilizing the stability estimate \eqref{eq:estimatesheat} and the fact that $\mathbf{u}_h$ is uniformly bounded in $\mathbf{L}^2(\Omega)$. 

\underline{Step 2.} Let $K\in\mathscr{T}_h$ and $\gamma\in\mathscr{S}_K$. We now bound $h_K^{\frac{1}{p}}\|\mathpzc{J}_{\gamma}\|_{L^p(\gamma)}$ in \eqref{eq:local_indicator_temp_I}--\eqref{eq:local_indicator_temp_II}. To accomplish this task, we utilize the bubble function $\phi_\gamma$ defined in \eqref{def:bubble_side}. In fact, let us set $S=\phi_\gamma \Pi_{\gamma}(\mathpzc{J}_{\gamma})$ in \eqref{eq:error_equation_temp_2}. We recall that $\Pi_{\gamma}$ denotes the continuation operator of \cite[Section 3]{MR1650051}. Standard arguments thus yield  
\begin{equation*}
\|\mathpzc{J}_{\gamma}\|_{L^2(\gamma)}^2
\lesssim 
\sum_{K'\in\mathcal{N}_{\gamma}}h_{K'}^{-1}  (\|\nabla e_{T}\|_{\mathbf{L}^p(K')} +  \|e_{T}\|_{L^{\mathfrak{m}}(K')} +\|\mathbf{e}_{\mathbf{u}}\|_{\mathbf{L}^2(K')} )\|\phi_\gamma\Pi_{\gamma}(\mathpzc{J}_{\gamma})\|_{L^{p\prime}(K')},
\end{equation*}
where, to simplify notation, we have defined $\mathfrak{m}=2p/(2-p)$. We now notice that
\[
\|\phi_\gamma\Pi_{\gamma}(\mathpzc{J}_{\gamma})\|_{L^{p\prime}(K')}\lesssim h_{K}^{\frac{1}{p\prime}}\|\mathpzc{J}_{\gamma}\|_{L^{p\prime}(\gamma)},
\qquad
\|\mathpzc{J}_{\gamma}\|_{L^{p\prime}(\gamma)} \lesssim h_{K'}^{\frac{1}{p\prime}-\frac{1}{2}} \|\mathpzc{J}_{\gamma}\|_{L^{2}(\gamma)}.
\]
The first estimate follows immediately from Lemma \ref{lemma:bubble_properties} and the second one is a consequence of a scaled--trace inequality and an inverse estimate. With these estimates at hand, we can thus obtain
\begin{equation*}
\|\mathpzc{J}_{\gamma}\|_{L^2(\gamma)}^2
\lesssim 
\sum_{K'\in\mathcal{N}_\gamma} (\|\nabla e_{T}\|_{\mathbf{L}^p(K')} +  \|e_{T}\|_{L^{\mathfrak{m}}(K')}+\|\mathbf{e}_{\mathbf{u}}\|_{\mathbf{L}^2(K')}\big)h_{K'}^{-\frac{1}{p} + \frac{1}{p\prime}-\frac{1}{2}} \|\mathpzc{J}_{\gamma}\|_{L^{2}(\gamma)}.
\end{equation*}
This bound combined with the estimate $h_{K'}^{\frac{1}{2}-\frac{1}{p}}\|\mathpzc{J}_{\gamma}\|_{L^{p}(\gamma)} \lesssim  \|\mathpzc{J}_{\gamma}\|_{L^{2}(\gamma)}$ allow us to conclude the desired estimate
\begin{equation}\label{eq:jump_estimate_temp}
h_K^{\frac{1}{p}}\|\mathpzc{J}_{\gamma}\|_{L^p(\gamma)}
\lesssim 
\sum_{K'\in\mathcal{N}_\gamma}\left (\|\nabla e_{T}\|_{\mathbf{L}^p(K')}+ \|e_{T}\|_{L^{\mathfrak{m}}(K')} + \|\mathbf{e}_{\mathbf{u}}\|_{\mathbf{L}^2(K')}\right).
\end{equation}

\underline{Step 3.} Let $K\in\mathscr{T}_h$. We now bound the remaining term $h_K^{\frac{2-p}{p}}$ in \eqref{eq:local_indicator_temp_I}. We first notice that, if $K\cap \{z\}=\emptyset$, then the desired estimate \eqref{eq:local_efficiency_temp} follows directly from the previous two steps. If, on the other hand, $K\cap \{z\}\neq\emptyset$ and $z\in K$ is not a vertex of $K$, then we must obtain a bound for the aforementioned term. To accomplish this task, we invoke the smooth function $\mu$ introduced in \cite[Section 3]{MR2262756} which is such that
\begin{equation}\label{eq:properties_bubble_mu}
\mathfrak{M}_\mu:=\text{supp}(\mu)\subset \mathcal{N}_K^{*}, \quad
\mu(z)=1,\quad \|\mu\|_{L^\infty(\mathfrak{M}_\mu)}=1,\quad \|\nabla \mu \|_{\mathbf{L}^\infty(\mathfrak{M}_\mu)}\lesssim h_K^{-1}. \hspace{-0.3cm}
\end{equation}
In addition to \eqref{eq:properties_bubble_mu}, the function $\mu$ satisfies the estimates
\begin{equation}
\|\mu\|_{L^{p\prime}(\mathcal{N}_K^{*})}\lesssim h_K^{\frac{2}{p\prime}}, 
\qquad
\| \nabla \mu\|_{L^{p\prime}(\mathcal{N}_K^{*})}\lesssim h_K^{\frac{2}{p\prime}-1}, 
\qquad
\|\mu\|_{L^{p\prime}(\gamma)}\lesssim h_K^{\frac{1}{p\prime}}.
\label{eq:properties_bubble_mu_2}
\end{equation}
To bound the term $h_K^{\frac{2-p}{p}}$, we also need to introduce the set
\begin{equation*}
\mathscr{S}(\mathcal{N}_K^{*}):=\{\gamma\in\mathscr{S}: \gamma\in \partial K', \, K'\in \mathcal{N}_K^{*} \, ; \, \gamma\not\in \partial\mathcal{N}_K^{*} \}.
\end{equation*}

Set $S=\mu$ in \eqref{eq:error_equation_temp_2} and then invoke H\"older's inequality and the estimates for $\mu$ stated in \eqref{eq:properties_bubble_mu} and \eqref{eq:properties_bubble_mu_2} to obtain
\begin{multline*}
1 
\lesssim
\sum_{K'\in\mathcal{N}_K^{*}}
\left[\left(\|\nabla e_T\|_{\mathbf{L}^p(K')} + \|e_{T}\|_{L^{\mathfrak{m}}(K')} + \|\mathbf{e}_\mathbf{u}\|_{\mathbf{L}^2(K')}\right)\|\nabla \mu\|_{\mathbf{L}^{p\prime}(K')} 
\right.
\\
\left.
+ \| \mathpzc{R}_{K'}\|_{L^p(K')}\|\mu\|_{L^{p\prime}(K')}\right] 
+
\sum_{\gamma\in\mathscr{S}(\mathcal{N}_K^{*})}\|\mathpzc{J}_{\gamma}\|_{L^p(\gamma)}\|\mu\|_{L^{p\prime}(\gamma)}  \lesssim \sum_{\gamma\in\mathscr{S}(\mathcal{N}_K^{*})}h_{K'}^{\frac{1}{p\prime}}\|\mathpzc{J}_{\gamma}\|_{L^p(\gamma)}
\\
+ \sum_{K'\in\mathcal{N}_K^{*}}\left[h_{K'}^{\frac{2}{p\prime}-1}\big(\|\nabla e_T\|_{\mathbf{L}^p(K')} + \|e_{T}\|_{L^{\mathfrak{m}}(K')} + \|\mathbf{e}_\mathbf{u}\|_{\mathbf{L}^2(K')}\big) + h_{K'}^{\frac{2}{p\prime}}\|\mathpzc{R}_{K'}\|_{L^p(K')}\right].
\end{multline*}
Here, $\mathfrak{m}=2p/(2-p)$. In view of estimates \eqref{eq:residual_estimate_temp} and \eqref{eq:jump_estimate_temp}, we can thus arrive at
\begin{equation}\label{eq:delta_estimate_temp}
h_K^{\frac{2-p}{p}} \lesssim \sum_{K'\in\mathcal{N}_K^{*}}\left(\|\nabla e_T\|_{\mathbf{L}^p(K')} + \|e_{T}\|_{L^{\mathfrak{m}}(K')} + \|\mathbf{e}_\mathbf{u}\|_{\mathbf{L}^2(K')}\right).
\end{equation}

We conclude the proof by gathering the estimates \eqref{eq:residual_estimate_temp}, \eqref{eq:jump_estimate_temp}, and \eqref{eq:delta_estimate_temp}.
\end{proof}

\subsubsection{Local estimates for $\mathscr{E}_{K}$}

We now analyze local estimates for the indicator $\mathscr{E}_{K}$ defined in \eqref{eq:local_indicator_darcy_II}. As an instrumental ingredient, we introduce a suitable approximation of the term $\nu(T_h)$ involved in the definition of the element residual $\mathscr{R}_{K}$ given in \eqref{eq:residuals_Darcy_2}. For $K \in \mathscr{T}_h$, we define the linear approximation
$\nu_h:  W^{1,p}(K)\ni S\to\nu_{h}(S)\in\mathbb{P}_{1}(K)$ by
\begin{equation}\label{eq:nu_h_f}
\nu_h(S)(\mathbf{x})|_{K}:=\frac{1}{|K|}\int_{K}\nu(S(\mathbf{y}))\mathsf{d}\mathbf{y} +\left[\frac{1}{|K|}\int_{K}(\nabla\nu(S))(\mathbf{y})\mathsf{d}\mathbf{y}\right]\cdot (\mathbf{x}-\mathbf{c}).
\end{equation}
Here, $|K|$ and $\mathbf{c}$ denote the Lebesgue measure and the center of $K$, respectively. We notice that $|\mathbf{x}-\mathbf{c}|\leq h_K$ and observe that $\nu_h$ is invariant under affine transformations and that, if $\nu(S)|_K \in \mathbb{P}_1(K)$, then $\nu_h(S) = \nu(S)$ in $K$ \cite[Section 4.2]{MR4041519}.

In what follows, and in addition to the assumptions stated in Section \ref{sec:main_assump}, we will assume that $\nu \in W^{2,\infty}(\mathbb{R})$. This immediately implies the existence of a real number $\nu_{+}'$ such that $\nu'(s) \leq \nu_{+}'$ for all $s\in \mathbb{R}$. In addition, we have that $\nu'\in C^{0,1}(\mathbb{R})$ with a Lipschitz constant $C_{\mathcal{L}}^{\prime}$. With the assumption that $\nu \in W^{2,\infty}(\mathbb{R})$ at hand, basic computations, on the basis of definition \eqref{eq:nu_h_f} and  \eqref{eq:nu}, reveal the following bound:
\begin{equation}\label{eq:nu_h_f_bound}
\|\nu_{h}(S)\|_{L^{\infty}(K)} \leq \nu_{+}+\nu_{+}^{\prime}h_K|K|^{-\frac{1}{p}}\|\nabla S\|_{\mathbf{L}^{p}(K)} \quad  \forall S\in W^{1,p}(K). 
\end{equation}
Similar arguments also yield, for $S,R\in W^{1,p}(K)$ and $p > 4/3$, the following estimate:
\begin{multline}\label{eq:nu_S_nu_R}
\|\nu_{h}(S)-\nu_{h}(R)\|_{L^{\frac{2p}{2-p}}(K)} \leq h_K\left(\nu_{+}^{\prime}|K|^{-\frac{1}{2}}\|\nabla (S - R)\|_{\mathbf{L}^p(K)}
\right.
\\
\left.
+C_{\mathcal{L}}^{\prime}|K|^{-\frac{1}{p}}\|S-R\|_{L^{\frac{2p}{2-p}}(K)}\|\nabla R\|_{\mathbf{L}^p(K)}\right)
+
C_{\mathcal{L}}
\|S-R\|_{L^{\frac{2p}{2-p}}(K)}.
\end{multline}

The following projection estimate is instrumental.

\begin{lemma}[projection estimate]\label{thm:lemma1_efficiencydarcy}
Let $(\mathbf{u},\mathsf{p},T) \in \mathbf{H}_0(\textnormal{div},\Omega)\times L_0^2(\Omega)\times W_0^{1,p}(\Omega)$ be a solution to \eqref{eq:modelweak} with a forcing term $\mathbf{f}$, which is such that $\|\mathbf{f}\|_{\mathbf{L}^2(\Omega)}\leq \nu_{-}\min\{\mathfrak{C},\tilde{\mathfrak{C}}\}$.  Let $(\mathbf{u}_h,\mathsf{p}_h,T_h) \in \mathbf{X}_{h}\times Q_{h}\times V_{h}$ be a solution to the discrete system \eqref{eq:model_discrete} for $0<h\leq h_{\star}$. If $\nu\in W^{2,\infty}(\mathbb{R})$, and \eqref{eq:discrete_assump} and \eqref{eq:global_rel_assump} holds, then, for $K\in \mathscr{T}_h$, we have
\begin{multline}\label{eq:lemma_nu}
\|(\nu(T_h)-\nu_h(T_h))\mathbf{u}_h\|_{\mathbf{L}^2(K)}
\lesssim
 \|\nu_h(T)-\nu(T)\|_{L^{\frac{2p}{2-p}}(K)}
\\
+
h_K^{1-\frac{2}{p}}\left( 
\|\mathbf{e}_{\mathbf{u}}\|_{\mathbf{L}^2(K)} + \|\nabla e_{T}\|_{\mathbf{L}^p(K)}
+
 \|e_{T}\|_{L^{\frac{2p}{2-p}}(K)} \right).
\end{multline}
\end{lemma}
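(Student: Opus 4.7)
The plan is to work from the telescoping identity
\begin{equation*}
\nu(T_h) - \nu_h(T_h) = \bigl[\nu(T_h) - \nu(T)\bigr] + \bigl[\nu(T) - \nu_h(T)\bigr] + \bigl[\nu_h(T) - \nu_h(T_h)\bigr],
\end{equation*}
which, after multiplying by $\mathbf{u}_h$ and invoking the triangle inequality, reduces the claim to three local $\mathbf{L}^2(K)$ bounds. A key exponent identity to keep in mind throughout is $\tfrac{2-p}{2p}+\tfrac{1}{p'}=\tfrac{1}{2}$, which legitimises H\"older pairing with exponents $\tfrac{2p}{2-p}$ and $p'$.

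For the first summand I would bound pointwise via the Lipschitz property of $\nu$, giving $|\nu(T_h)-\nu(T)|\le C_{\mathcal{L}}|e_T|$, then apply H\"older with the exponents above. For the third summand I would invoke \eqref{eq:nu_S_nu_R} with $S=T_h$, $R=T$, using $|K|\sim h_K^2$ together with the global stability $\|\nabla T\|_{\mathbf{L}^p(\Omega)}\lesssim 1$ from Proposition \ref{prop:wp_temp} to obtain $\|\nu_h(T)-\nu_h(T_h)\|_{L^{2p/(2-p)}(K)} \lesssim \|\nabla e_T\|_{\mathbf{L}^p(K)} + \|e_T\|_{L^{2p/(2-p)}(K)}$, and then I would again pair with $\|\mathbf{u}_h\|_{\mathbf{L}^{p'}(K)}$ via H\"older. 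In both summands the factor $\|\mathbf{u}_h\|_{\mathbf{L}^{p'}(K)}$ would then be handled by a scaled inverse estimate on the finite--dimensional space $RT_0(K)$, which yields $\|\mathbf{u}_h\|_{\mathbf{L}^{p'}(K)} \lesssim h_K^{2/p'-1}\|\mathbf{u}_h\|_{\mathbf{L}^2(K)} = h_K^{1-2/p}\|\mathbf{u}_h\|_{\mathbf{L}^2(K)}$; this is exactly the $h_K^{1-2/p}$ prefactor visible on the right-hand side of \eqref{eq:lemma_nu}. Writing $\mathbf{u}_h=\mathbf{u}-\mathbf{e}_\mathbf{u}$ inside this last step, and using the uniform bound $\|\mathbf{u}\|_{\mathbf{L}^2(\Omega)}\lesssim 1$ from Theorem \ref{thm:wp_darcy}, produces the $\|\mathbf{e}_\mathbf{u}\|_{\mathbf{L}^2(K)}$ contribution in \eqref{eq:lemma_nu}.

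The middle, \emph{consistency} summand $\|(\nu(T)-\nu_h(T))\mathbf{u}_h\|_{\mathbf{L}^2(K)}$ is the piece I expect to be the main obstacle: the target bound insists that it enter as the bare quantity $\|\nu_h(T)-\nu(T)\|_{L^{2p/(2-p)}(K)}$, \emph{without} any $h_K^{1-2/p}$ amplification, so the naive Hölder+inverse-estimate route used for the other two summands is too wasteful here. My plan is to exploit the fact that, by the very definition \eqref{eq:nu_h_f}, the local mean of $\nu_h(T)$ on $K$ agrees with that of $\nu(T)$, i.e.\ $\nu(T)-\nu_h(T)$ is mean-free on $K$. Pairing this against $\mathbf{u}_h|_K = a_K\mathbf{x}+\mathbf{b}_K$ allows the constant part $\mathbf{b}_K$ to be removed by subtracting from $\nu(T)-\nu_h(T)$ an arbitrary constant under the integral, after which a Poincar\'e--Wirtinger inequality and a careful H\"older pairing should yield the desired $\|\nu_h(T)-\nu(T)\|_{L^{2p/(2-p)}(K)}$ term at zeroth order in $h_K$, with the remaining $a_K\mathbf{x}$-contribution absorbed into the $h_K^{1-2/p}\|\mathbf{e}_\mathbf{u}\|_{\mathbf{L}^2(K)}$ block via $\mathbf{u}_h=\mathbf{u}-\mathbf{e}_\mathbf{u}$ and the Darcy identity $\nu(T)\mathbf{u}=\mathbf{f}-\nabla\mathsf{p}$. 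Collecting the three pieces then yields \eqref{eq:lemma_nu}.
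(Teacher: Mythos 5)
Your telescoping decomposition is a reasonable starting point, but the way you handle the velocity factor is where the argument breaks, and it differs from the paper in a way that matters. The paper splits $\mathbf{u}_h=\mathbf{u}-\mathbf{e}_{\mathbf{u}}$ \emph{first}: the piece $(\nu(T_h)-\nu_h(T_h))\mathbf{e}_{\mathbf{u}}$ is controlled purely through the $L^\infty(K)$ bounds \eqref{eq:nu} and \eqref{eq:nu_h_f_bound}, and every subsequent H\"older pairing at the exponents $\tfrac{2p}{2-p}$ and $p'$ is then made against the \emph{continuous} velocity, whose $\mathbf{L}^{p'}$ norm is bounded by hypothesis \eqref{eq:global_rel_assump}. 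You instead pair against $\mathbf{u}_h$ and convert $\|\mathbf{u}_h\|_{\mathbf{L}^{p'}(K)}$ to $\|\mathbf{u}_h\|_{\mathbf{L}^2(K)}$ by an inverse estimate, paying a factor $h_K^{1-2/p}$ that the paper never incurs. For the summand $\nu_h(T)-\nu_h(T_h)$ this is fatal: estimate \eqref{eq:nu_S_nu_R}, with $|K|\approx h_K^2$ and $\|\nabla T\|_{\mathbf{L}^p(\Omega)}\lesssim 1$, yields $\|\nabla e_T\|_{\mathbf{L}^p(K)}+\bigl(1+h_K^{1-2/p}\bigr)\|e_T\|_{L^{2p/(2-p)}(K)}$, not the $h$-free bound you assert; multiplying by your additional inverse-estimate factor then produces $h_K^{2(1-2/p)}\|e_T\|_{L^{2p/(2-p)}(K)}$. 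Since $1-2/p<0$, this is strictly larger than the $h_K^{1-2/p}\|e_T\|_{L^{2p/(2-p)}(K)}$ permitted by \eqref{eq:lemma_nu} as $h_K\downarrow 0$ and cannot be absorbed.

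For the middle summand your mean-zero/Poincar\'e--Wirtinger device does not apply: the quantity to be bounded is the $\mathbf{L}^2(K)$ \emph{norm} of the product $(\nu(T)-\nu_h(T))\mathbf{u}_h$, not an integral of $\nu(T)-\nu_h(T)$ against a test function, so there is no integral under which to subtract a constant, and orthogonality of $\nu(T)-\nu_h(T)$ to constants on $K$ buys nothing here. No such trick is needed. Once $\mathbf{e}_{\mathbf{u}}$ has been peeled off as above, the relevant term is $\|(\nu_h(T)-\nu(T))\mathbf{u}\|_{\mathbf{L}^2(K)}\leq\|\nu_h(T)-\nu(T)\|_{L^{2p/(2-p)}(K)}\|\mathbf{u}\|_{\mathbf{L}^{p'}(K)}$, and \eqref{eq:global_rel_assump} makes the second factor of order one; this is precisely why the data-oscillation term enters \eqref{eq:lemma_nu} without any power of $h_K$. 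In short, the missing idea is to separate $\mathbf{e}_{\mathbf{u}}$ at the outset and exploit the a priori integrability $\mathbf{u}\in\mathbf{L}^{p'}(\Omega)$; with the decomposition and tools you propose, the proof does not close.
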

\begin{proof}
We begin with a simple application of the triangle inequality to obtain
\begin{multline}\label{eq:projection_triangle}
\|(\nu(T_h)-\nu_h(T_h))\mathbf{u}_h\|_{\mathbf{L}^2(K)}\\
\leq \|(\nu(T_h)-\nu_h(T_h))\mathbf{e}_{\mathbf{u}}\|_{\mathbf{L}^2(K)} + \|(\nu(T_h)-\nu_h(T_h))\mathbf{u}\|_{\mathbf{L}^2(K)}=: \mathrm{I}+\mathrm{II}.
\end{multline}

We first control the term $\mathrm{I}$. To accomplish this task, we invoke the fact that $\nu$ is uniformly bounded, i.e., $\nu$ satisfies \eqref{eq:nu}, in combination with estimate \eqref{eq:nu_h_f_bound} to arrive at
\begin{align*}
\mathrm{I} &\lesssim \|\mathbf{e}_{\mathbf{u}}\|_{\mathbf{L}^2(K)} + 
\left[ 1+ h_K|K|^{-\frac{1}{p}}\left(\|\nabla e_{T}\|_{\mathbf{L}^p(K)}+\|\nabla T\|_{\mathbf{L}^p(K)}\right) \right]
\|\mathbf{e}_{\mathbf{u}}\|_{\mathbf{L}^2(K)} \\
&\lesssim  \|\mathbf{e}_{\mathbf{u}}\|_{\mathbf{L}^2(K)} + h_{K}^{1-\frac{2}{p}}\left(\|\nabla e_{T}\|_{\mathbf{L}^p(K)} + \|\mathbf{e}_{\mathbf{u}}\|_{\mathbf{L}^2(K)}\right),
\end{align*}
where we have also utilized assumption \eqref{eq:discrete_assump}, the stability estimate \eqref{eq:estimatesheat}, and the fact that $|K|\approx h_K^2$. To estimate $\mathrm{II}$, we invoke, again, a triangle inequality to obtain
\begin{align*}
\mathrm{II} &\leq \|(\nu_h(T_h)-\nu_h(T))\mathbf{u}\|_{\mathbf{L}^2(K)} 
+ \|(\nu_h(T)-\nu(T))\mathbf{u}\|_{\mathbf{L}^2(K)}
+
\|(\nu(T)-\nu(T_h))\mathbf{u}\|_{\mathbf{L}^2(K)}\\
&=:\mathrm{II}_{1} + \mathrm{II}_{2} + \mathrm{II}_{3}.
\end{align*}
H\"older's inequality combined with the assumption \eqref{eq:global_rel_assump} on $\mathbf{u}\in \mathbf{L}^{p\prime}(\Omega)$ yields the estimate $\mathrm{II}_{1} \leq  \|\nu_h(T_h)-\nu_h(T)\|_{L^{\mathfrak{m}}(K)}\|\mathbf{u}\|_{\mathbf{L}^{p\prime}(K)} \lesssim \|\nu_h(T_h)-\nu_h(T)\|_{L^{\mathfrak{m}}(K)}$, where $\mathfrak{m} = 2p/(2-p)$. This bound, estimate 
\eqref{eq:nu_S_nu_R}, the fact that $|K|\approx h_K^2$, and the stability estimate \eqref{eq:estimatesheat} allow us to conclude that
\begin{equation*}
\mathrm{II}_{1}
\lesssim
\|\nabla e_{T}\|_{\mathbf{L}^p(K)}+\left( 1+h_K^{1-2/p} \right) \|e_{T}\|_{L^{\mathfrak{m}}(K)}.
\end{equation*}
To control the terms $\mathrm{II}_{2}$ and $\mathrm{II}_{3}$, we use H\"older's inequality and the Lipschitz property of $\nu$. These arguments yield 
\begin{equation*}
\mathrm{II}_{2} \leq \|\nu_h(T)-\nu(T)\|_{L^{\mathfrak{m}}(K)}\|\mathbf{u}\|_{\mathbf{L}^{p'}(K)}, \quad  \mathrm{II}_{3} \leq C_{\mathcal{L}} \|e_{T}\|_{L^{\mathfrak{m}}(K)}\|\mathbf{u}\|_{\mathbf{L}^{p'}(K)}.
\end{equation*}
Consequently, in view of the fact that $\mathbf{u}\in \mathbf{L}^{p\prime}(\Omega)$, we obtain
\begin{equation*}
\mathrm{II} 
\lesssim
\|\nabla e_{T}\|_{\mathbf{L}^p(K)}+\left(1+h_K^{1-2/p } \right) \|e_{T}\|_{L^{\mathfrak{m}}(K)} + \|\nu_h(T)-\nu(T)\|_{L^{\mathfrak{m}}(K)}.
\end{equation*}

The desired estimate \eqref{eq:lemma_nu} thus follows from replacing the obtained ones for $\mathrm{I}$ and $\mathrm{II}$ into \eqref{eq:projection_triangle}. This concludes the proof.
\end{proof}

We now proceed to investigate local estimates for the error indicator $\mathscr{E}_{K}$ defined in \eqref{eq:local_indicator_darcy_II}.

\begin{theorem}[local estimate for $\mathscr{E}_{K}$]\label{thm:efficiencydarcy}
Under the framework of Lemma \ref{thm:lemma1_efficiencydarcy}, we have, for $K\in\mathscr{T}_h$, the following local estimate for the error indicator $\mathscr{E}_{K}$:
\begin{multline}\label{eq:local_efficiency_darcya}
\mathscr{E}_{K}
\lesssim 
h_K^{\frac{2}{p\prime}}
\left(\|\mathbf{e}_{\mathbf{u}}\|_{\mathbf{L}^2(\mathcal{N}_K)}+\|e_{T}\|_{L^{\frac{2p}{2-p}}(\mathcal{N}_K)}+ \|\nabla e_{T}\|_{\mathbf{L}^p(\mathcal{N}_K)}\right) + \|e_\mathsf{p}\|_{L^2(\mathcal{N}_K)}\\
+\sum_{K'\in\mathcal{N}_K}h_{K'}\left(\|\nu_h(T)-\nu(T)\|_{L^{\frac{2p}{2-p}}(K')} + \|\mathbf{f}-\mathscr{P}_{K'}\mathbf{f}\|_{\mathbf{L}^2(K')}\right),
\end{multline}
where $\mathcal{N}_K$ is defined in \eqref{eq:patch} and $\mathscr{P}_{K}$ denotes the $\mathbf{L}^2(K)$--orthogonal projection operator onto $[\mathbb{P}_{0}(K)]^2$. The hidden constant is independent of continuous and discrete solutions $(\mathbf{u},\mathsf{p},T)$ and $(\mathbf{u}_h,\mathsf{p}_h,T_h)$, respectively, the size of the elements in the mesh $\mathscr{T}_h$, and $\#\mathscr{T}_h$.
\end{theorem}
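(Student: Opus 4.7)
The plan is to mimic the residual-type efficiency proof of Theorem \ref{thm:efficiency}, now for the Darcy block, and to couple it with the projection estimate of Lemma \ref{thm:lemma1_efficiencydarcy} in order to absorb the nonlinear viscosity term. I start by deriving, exactly as in \eqref{eq:error_equation_darcy_2} but without Galerkin orthogonality, the global error representation
\begin{equation*}
\int_\Omega \nu(T_h)\mathbf{e}_{\mathbf{u}}\cdot \mathbf{v}\mathsf{d}\mathbf{x}+\int_\Omega(\nu(T)-\nu(T_h))\mathbf{u}\cdot \mathbf{v}\mathsf{d}\mathbf{x}-\int_\Omega e_{\mathsf{p}}\,\textnormal{div}\,\mathbf{v}\mathsf{d}\mathbf{x}=\sum_{K\in\mathscr{T}_h}\int_K \mathscr{R}_K\cdot \mathbf{v}\mathsf{d}\mathbf{x}-\sum_{\gamma\in\mathscr{S}}\int_\gamma \mathscr{J}_\gamma\cdot \mathbf{v}\mathsf{d}s,
\end{equation*}
valid for every $\mathbf{v}\in \mathbf{H}^1_0(\Omega)$ (by elementwise integration by parts). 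Because $\mathsf{p}_h\in Q_h$ is piecewise constant, $\nabla\mathsf{p}_h\equiv 0$ on each element, so $\mathscr{R}_K=(\mathbf{f}-\nu(T_h)\mathbf{u}_h)|_K$; to make bubble function arguments applicable I replace this by the polynomial surrogate $\tilde{\mathscr{R}}_K:=\mathscr{P}_K\mathbf{f}-\nu_h(T_h)\mathbf{u}_h$, and control $\mathscr{R}_K-\tilde{\mathscr{R}}_K$ via the oscillation $\mathbf{f}-\mathscr{P}_K\mathbf{f}$ and the term $(\nu(T_h)-\nu_h(T_h))\mathbf{u}_h$ estimated in Lemma \ref{thm:lemma1_efficiencydarcy}.

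For the element residual, I set $\mathbf{v}=\varphi_K\tilde{\mathscr{R}}_K$ (the standard bubble \eqref{def:standard_bubbles} times $\tilde{\mathscr{R}}_K$, extended by zero), so the jump sum and all other element contributions vanish. Lemma \ref{lemma:bubble_properties}-type equivalences yield $\|\tilde{\mathscr{R}}_K\|_{\mathbf{L}^2(K)}^2\lesssim \int_K \tilde{\mathscr{R}}_K\cdot\mathbf{v}\mathsf{d}\mathbf{x}$. Each right-hand-side term of the error equation is then bounded by H\"older: the velocity term gives $\nu_+\|\mathbf{e}_\mathbf{u}\|_{\mathbf{L}^2(K)}\|\mathbf{v}\|_{\mathbf{L}^2(K)}$; the Lipschitz term is handled via the three-factor H\"older split $1/\mathfrak{m}+1/p'+1/2=1$, with $\mathfrak{m}=2p/(2-p)$, and uses $\|\mathbf{u}\|_{\mathbf{L}^{p'}(\Omega)}$ being controlled under \eqref{eq:global_rel_assump}, so that one picks up $C_{\mathcal{L}}\|e_T\|_{L^\mathfrak{m}(K)}\|\mathbf{v}\|_{\mathbf{L}^2(K)}$; the pressure term is estimated by an inverse inequality, $\|\textnormal{div}\,\mathbf{v}\|_{L^2(K)}\lesssim h_K^{-1}\|\mathbf{v}\|_{\mathbf{L}^2(K)}$, producing $h_K^{-1}\|e_\mathsf{p}\|_{L^2(K)}\|\mathbf{v}\|_{\mathbf{L}^2(K)}$. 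Cancelling one power of $\|\tilde{\mathscr{R}}_K\|_{\mathbf{L}^2(K)}$, multiplying by $h_K$, and invoking Lemma \ref{thm:lemma1_efficiencydarcy} to handle $\|\mathscr{R}_K-\tilde{\mathscr{R}}_K\|_{\mathbf{L}^2(K)}$, the factor $h_K\cdot h_K^{1-2/p}=h_K^{2/p'}$ appears in front of the error terms, exactly matching the scaling in \eqref{eq:local_efficiency_darcya}; the oscillations $h_K\|\mathbf{f}-\mathscr{P}_K\mathbf{f}\|$ and $h_K\|\nu_h(T)-\nu(T)\|_{L^\mathfrak{m}(K)}$ remain, and $\|e_\mathsf{p}\|_{L^2(K)}$ contributes without a power of $h_K$ (due to the $h_K^{-1}$ from the inverse estimate on $\textnormal{div}$).

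For the jump, I test with $\mathbf{v}=\varphi_\gamma\Pi_\gamma(\mathscr{J}_\gamma)$, which is compactly supported in $\mathcal{N}_\gamma$ so that only the edge $\gamma$ contributes to the jump sum. Lemma \ref{lemma:bubble_properties} gives $\|\mathscr{J}_\gamma\|_{\mathbf{L}^2(\gamma)}^2\lesssim \int_\gamma \mathscr{J}_\gamma\cdot\mathbf{v}\mathsf{d}s$, and the bounds $\|\mathbf{v}\|_{\mathbf{L}^2(\mathcal{N}_\gamma)}\lesssim h_\gamma^{1/2}\|\mathscr{J}_\gamma\|_{\mathbf{L}^2(\gamma)}$ together with the inverse estimate $\|\textnormal{div}\,\mathbf{v}\|_{L^2(\mathcal{N}_\gamma)}\lesssim h_\gamma^{-1/2}\|\mathscr{J}_\gamma\|_{\mathbf{L}^2(\gamma)}$ convert each error-equation term into a factor of $h_\gamma^{1/2}\|\mathscr{J}_\gamma\|_{\mathbf{L}^2(\gamma)}$ times the desired errors on $\mathcal{N}_\gamma$, except the pressure term which scales as $h_\gamma^{-1/2}\|e_\mathsf{p}\|_{L^2(\mathcal{N}_\gamma)}\|\mathscr{J}_\gamma\|_{\mathbf{L}^2(\gamma)}$. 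Dividing through and multiplying by $h_\gamma^{1/2}$ yields
\begin{equation*}
h_\gamma^{1/2}\|\mathscr{J}_\gamma\|_{\mathbf{L}^2(\gamma)}\lesssim h_\gamma\sum_{K'\in\mathcal{N}_\gamma}\|\mathscr{R}_{K'}\|_{\mathbf{L}^2(K')}+h_\gamma(\|\mathbf{e}_\mathbf{u}\|_{\mathbf{L}^2(\mathcal{N}_\gamma)}+\|e_T\|_{L^\mathfrak{m}(\mathcal{N}_\gamma)})+\|e_\mathsf{p}\|_{L^2(\mathcal{N}_\gamma)};
\end{equation*}
the first sum is then folded into the already-obtained element bound, and collecting over the two residual contributions yields \eqref{eq:local_efficiency_darcya}. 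The main obstacle is the interplay between the nonlinear non-polynomial data $\nu(T_h)\mathbf{u}_h$ (which forbids a direct bubble argument) and the low regularity of $\mathbf{u}$ in the Lipschitz term: both difficulties are resolved simultaneously by the linearization $\nu_h(T_h)$ and by the projection estimate of Lemma \ref{thm:lemma1_efficiencydarcy}, which converts the nonpolynomial part into exactly the $h_K^{2/p'}$-scaled errors and the oscillation $\|\nu_h(T)-\nu(T)\|_{L^\mathfrak{m}(K)}$ appearing in the statement.
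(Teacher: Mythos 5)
Your proposal follows essentially the same route as the paper's proof: the same error identity tested with bubble functions $\varphi_K\tilde{\mathscr{R}}_K$ and $\varphi_\gamma\Pi_\gamma(\mathscr{J}_\gamma)$, the same polynomial surrogate $\mathscr{P}_K\mathbf{f}-\nu_h(T_h)\mathbf{u}_h$ with the difference absorbed by the oscillation of $\mathbf{f}$ and the projection estimate of Lemma \ref{thm:lemma1_efficiencydarcy}, the same three-factor H\"older split for the Lipschitz term, and the same $h_K^{-1}$ inverse estimate producing the unscaled $\|e_{\mathsf{p}}\|_{L^2}$ contribution. The argument is correct; your intermediate bounds carrying $h_K$ rather than $h_K^{2/p'}$ on some error terms are in fact slightly sharper and imply the stated estimate since $h_K\leq h_K^{2/p'}$.
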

\begin{proof}
We begin the proof by noticing that similar arguments to the ones used to derive  \eqref{eq:error_equation_darcy_2} yield, for an arbitrary function $\mathbf{v} \in \mathbf{H}_0^{1}(\Omega)$, the identity
\begin{multline}\label{eq:error_equation_darcy_3}
\int_\Omega \nu(T_h)\mathbf{e}_{\mathbf{u}}\cdot\mathbf{v}\mathsf{d}\mathbf{x} + \int_\Omega (\nu(T)-\nu(T_h))\mathbf{u}\cdot\mathbf{v}\mathsf{d}\mathbf{x} - \int_\Omega e_{\mathsf{p}}\text{ div }\mathbf{v}\mathsf{d}\mathbf{x} 
\\
= \sum_{K\in\mathscr{T}}\left(\int_K(\mathscr{P}_{K}\mathbf{f}-\nu_h(T_h)\mathbf{u}_h-\nabla \mathsf{p}_h)\cdot\mathbf{v}\mathsf{d}\mathbf{x} + \int_K(\mathbf{f}-\mathscr{P}_{K}\mathbf{f})\cdot\mathbf{v}\mathsf{d}\mathbf{x}\right.\\
\left.
- \int_K((\nu(T_h)-\nu_h(T_h))\mathbf{u}_h\cdot\mathbf{v}\mathsf{d}\mathbf{x}\right)
-
\sum_{\gamma\in\mathscr{S}}\int_\gamma \mathscr{J}_{\gamma}\cdot \mathbf{v}\mathsf{d}s.
\end{multline}

We now proceed in two steps. 

\underline{Step 1.} Let $K\in\mathscr{T}_h$. We bound the residual term $h_K\|\mathscr{R}_{K}\|_{\mathbf{L}^2(K)}$ in \eqref{eq:local_indicator_darcy_II}. We begin with an application of a triangle inequality to obtain
\begin{multline}\label{eq:residual_triangle_darcy}
h_K\|\mathscr{R}_{K}\|_{\mathbf{L}^2(K)} 
\leq
h_K\|\mathscr{P}_{K}\mathbf{f}-\nu_h(T_h)\mathbf{u}_h-\nabla \mathsf{p}_h\|_{\mathbf{L}^2(K)}\\
+ h_K\|(\nu(T_h)-\nu_h(T_h))\mathbf{u}_h\|_{\mathbf{L}^2(K)} + h_K\|\mathbf{f}-\mathscr{P}_{K}\mathbf{f}\|_{\mathbf{L}^2(K)}.
\end{multline}
With the bound \eqref{eq:lemma_nu} at hand, it thus suffices to control the first term on the right-hand side of \eqref{eq:residual_triangle_darcy}. To accomplish this task, we set $\mathbf{v} = \varphi_K\tilde{\mathscr{R}}_{K}$ in \eqref{eq:error_equation_darcy_3}, where $\tilde{\mathscr{R}}_{K}=(\mathscr{P}_{K}\mathbf{f}-\nu_h(T_h)\mathbf{u}_h-\nabla \mathsf{p}_h)|^{}_K$. Standard properties of the bubble function $\varphi_K$ combined with basic inequalities and standard inverse estimates \cite[Lemma 4.5.3]{MR2373954} yield
\begin{multline*}
\|\tilde{\mathscr{R}}_K\|_{\mathbf{L}^2(K)}
\lesssim
\|\mathbf{e}_{\mathbf{u}}\|_{\mathbf{L}^2(K)} + \|\nu(T)-\nu(T_h)\|_{L^{\frac{2p}{2-p}}(K)}\|\mathbf{u}\|_{\mathbf{L}^{p\prime}(K)} + h_K^{-1}\| e_{\mathsf{p}}\|_{L^2(K)}
\\
+  \|\mathbf{f}-\mathscr{P}_{K}\mathbf{f}\|_{\mathbf{L}^2(K)} 
+ \|(\nu(T_h)-\nu_h(T_h))\mathbf{u}_h\|_{\mathbf{L}^2(K)}.
\end{multline*}
We now invoke the Lipschitz property that $\nu$ satisfies, estimate \eqref{eq:lemma_nu}, and the fact that $\mathbf{u}\in \mathbf{L}^{p\prime}(\Omega)$, to conclude that
\begin{multline}\label{eq:estimate_residual_darcy}
h_K\|\tilde{\mathscr{R}}_K\|_{\mathbf{L}^2(K)}
\lesssim
h_K^{\frac{2}{p'}}\left(\|\mathbf{e}_{\mathbf{u}}\|_{\mathbf{L}^2(K)}+\|e_{T}\|_{L^{\frac{2p}{2-p}}(K)} + \|\nabla e_{T}\|_{\mathbf{L}^p(K)}\right) \\
+ \|e_\mathsf{p}\|_{L^2(K)} + h_K\|\nu_h(T)-\nu(T)\|_{L^{\frac{2p}{2-p}}(K)} + h_K\|\mathbf{f}-\mathscr{P}_{K}\mathbf{f}\|_{\mathbf{L}^2(K)}.
\end{multline}

The desired estimate for $h_K\| \mathscr{R}_K\|_{\mathbf{L}^2(K)}$ follows from \eqref{eq:residual_triangle_darcy} and \eqref{eq:estimate_residual_darcy}.

\underline{Step 2.} Let $K\in\mathscr{T}_h$ and $\gamma\in\mathscr{S}_K$. We now bound the term $h_K^{\frac{1}{2}}\|\mathscr{J}_{\gamma}\|_{\mathbf{L}^2(\gamma)}$ in \eqref{eq:local_indicator_darcy_II}. To accomplish this task, we set $\mathbf{v} = \varphi_\gamma\Pi_{\gamma}(\mathscr{J}_{\gamma})$ in \eqref{eq:error_equation_darcy_3}, where $\varphi_\gamma$ denotes the bubble function defined in \eqref{def:standard_bubbles}. Standard properties of the bubble function $\varphi_\gamma$ and inverse estimates allow us to thus obtain the estimate 
\begin{multline*}
\|\mathscr{J}_{\gamma}\|_{\mathbf{L}^2(\gamma)}^2
\lesssim 
\sum_{K'\in\mathcal{N}_\gamma}\bigg[
\|\mathbf{e}_{\mathbf{u}}\|_{\mathbf{L}^2(K')} + h_{K'}^{-1}\|e_{\mathsf{p}}\|_{L^2(K')} + \|\tilde{\mathscr{R}}_{K'} \|_{\mathbf{L}^2(K')} \\
 + \|\nu(T)-\nu(T_h)\|_{L^{\frac{2p}{2-p}}(K')}\|\mathbf{u}\|_{\mathbf{L}^{p\prime}(K')} 
 + \|\mathbf{f}-\mathscr{P}_{K'}\mathbf{f}\|_{\mathbf{L}^2(K')}\\
 +\|(\nu(T_h)-\nu_h(T_h))\mathbf{u}_h\|_{\mathbf{L}^2(K')}\bigg]\|\varphi_\gamma\Pi_{\gamma}(\mathscr{J}_{\gamma})\|_{\mathbf{L}^2(K')}.
\end{multline*}
We now invoke the bound $\|\varphi_\gamma\Pi_{\gamma}(\mathscr{J}_{\gamma})\|_{\mathbf{L}^2(K')}\lesssim h_K^{\frac{1}{2}}\|\mathscr{J}_{\gamma}\|_{\mathbf{L}^2(\gamma)}$, the Lipschitz property that $\nu$ satisfies, the fact that $\mathbf{u}\in \mathbf{L}^{p\prime}(\Omega)$, and  estimates \eqref{eq:lemma_nu} and \eqref{eq:estimate_residual_darcy} to arrive at
\begin{multline}\label{eq:jump_darcy}
h_{K}^{\frac{1}{2}}\|\mathscr{J}_{\gamma}\|_{\mathbf{L}^2(\gamma)}
\lesssim 
\sum_{K'\in\mathcal{N}_\gamma} \bigg[
h_{K'}^{\frac{2}{p\prime}}
\left(\|\mathbf{e}_{\mathbf{u}}\|_{\mathbf{L}^2(K')}+\|e_{T}\|_{L^{\frac{2p}{2-p}}(K')}  + \|\nabla e_{T}\|_{\mathbf{L}^p(K')}
\right) 
\\
+ \|e_\mathsf{p}\|_{L^2(K')}  + h_{K'}\|\nu_h(T)-\nu(T)\|_{L^{\frac{2p}{2-p}}(K')} +  h_{K'}\|\mathbf{f}-\mathscr{P}_{K'}\mathbf{f}\|_{\mathbf{L}^2(K')}\bigg].
\end{multline}

The desired local estimate \eqref{eq:local_efficiency_darcya} thus follows by collecting the bounds \eqref{eq:residual_triangle_darcy}, \eqref{eq:estimate_residual_darcy}, and \eqref{eq:jump_darcy}. This concludes the proof.
\end{proof}

\subsubsection{Local estimates for $\mathfrak{E}_{K}$}

We now present local estimates for the local error indicator $\mathfrak{E}_{K}$ defined in \eqref{eq:local_indicator_darcy_I}.

\begin{theorem}[local estimates for $\mathfrak{E}_{K}$]\label{thm:efficiencydarcy2}
Let $(\mathbf{u},\mathsf{p},T) \in \mathbf{H}_0(\textnormal{div},\Omega)\times L_0^2(\Omega)\times W_0^{1,p}(\Omega)$ be a solution of \eqref{eq:modelweak} with a forcing term $\mathbf{f}$ which is such that $\|\mathbf{f}\|_{\mathbf{L}^2(\Omega)}\leq \nu_{-}\min\{\mathfrak{C},\tilde{\mathfrak{C}}\}$. Let $(\mathbf{u}_h,\mathsf{p}_h,T_h) \in \mathbf{X}_{h}\times Q_{h}\times V_{h}$ be a solution to the discrete system \eqref{eq:model_discrete} for $0< h \leq h_{\star}$. If assumptions \eqref{eq:discrete_assump} and \eqref{eq:global_rel_assump} hold and, for each $K\in\mathscr{T}_h$, $\nu|^{}_K$ is a polynomial, then the local error indicator $\mathfrak{E}_{K}$ satisfies the local estimate
\begin{multline}\label{eq:local_efficiency_darcya2}
\mathfrak{E}_{K}
\lesssim 
\|\mathbf{e}_{\mathbf{u}}\|_{\mathbf{L}^2(\mathcal{N}_K)} +  \sum_{{K'}\in\mathcal{N}_{K}}h_{K'}\|\mathbf{curl}(\mathbf{f}-\mathscr{P}_{K'}\mathbf{f})\|_{L^2(K')} \\
+ \|e_{T}\|_{L^{\frac{2p}{2-p}}(\mathcal{N}_K)} + \sum_{\gamma\in\mathscr{S}_{K}} h_K^{\frac{1}{2}}\|\llbracket (\mathbf{f}-\mathscr{P}_{K}\mathbf{f})\cdot\boldsymbol\tau\rrbracket\|_{L^2(\gamma)},
\end{multline}
where $\mathcal{N}_{K}$ is defined in \eqref{eq:patch} and $\mathscr{P}_{K}$ denotes the $\mathbf{L}^2(K)$--orthogonal projection operator onto $[\mathbb{P}_{0}(K)]^2$. The hidden constant is independent of continuous and discrete solutions, $(\mathbf{u},\mathsf{p},T)$ and $(\mathbf{u}_h,\mathsf{p}_h,T_h)$, respectively, the size of the elements in the mesh $\mathscr{T}_h$, and $\#\mathscr{T}_h$.
\end{theorem}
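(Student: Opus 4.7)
The plan is to follow the residual-estimation paradigm with bubble functions, in close analogy with the proof of Theorem \ref{thm:efficiencydarcy}, now using curl-based test functions $\mathbf{v}=\mathbf{curl}\,\vartheta$ with $\vartheta\in H_0^1(\Omega)$. Such $\mathbf{v}$ belongs to $\mathbf{V}(\Omega)$ and is therefore admissible in the unreduced counterpart of the error identity \eqref{eq:error_equation_darcy_1}; an elementwise integration by parts then yields
\begin{equation*}
\int_\Omega\!\big(\nu(T_h)\mathbf{e}_{\mathbf{u}}+(\nu(T)-\nu(T_h))\mathbf{u}\big)\cdot\mathbf{curl}\,\vartheta\,\mathsf{d}\mathbf{x}
=\sum_{K\in\mathscr{T}}\int_K \mathfrak{R}_K\vartheta\,\mathsf{d}\mathbf{x}+\sum_{\gamma\in\mathscr{S}}\int_\gamma \mathfrak{J}_\gamma\vartheta\,\mathsf{d}s,
\end{equation*}
which serves as the starting point of the analysis.

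First I would bound $h_K\|\mathfrak{R}_K\|_{L^2(K)}$. Because $\mathbf{f}$ is not polynomial, I introduce the piecewise-polynomial surrogate $\tilde{\mathfrak{R}}_K:=-\mathbf{curl}(\nu(T_h)\mathbf{u}_h)$; the hypothesis that $\nu|_K$ is polynomial, together with $T_h\in V_h$ and $\mathbf{u}_h\in\mathbf{X}_h$, ensures that $\tilde{\mathfrak{R}}_K$ is a polynomial on $K$. Since $\mathbf{curl}(\mathscr{P}_K\mathbf{f})=0$, the triangle inequality gives $\|\mathfrak{R}_K\|_{L^2(K)}\leq\|\tilde{\mathfrak{R}}_K\|_{L^2(K)}+\|\mathbf{curl}(\mathbf{f}-\mathscr{P}_K\mathbf{f})\|_{L^2(K)}$. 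Setting $\vartheta=\varphi_K\tilde{\mathfrak{R}}_K$ extended by zero to $\Omega$, the bubble-function equivalence of Lemma \ref{lemma:bubble_properties}, the fact that $\vartheta$ vanishes on $\partial K$ (so the jump contributions vanish), and an integration by parts of $\int_K\mathbf{curl}(\mathbf{f}-\mathscr{P}_K\mathbf{f})\,\vartheta$ yield
\begin{equation*}
\|\tilde{\mathfrak{R}}_K\|_{L^2(K)}^2\lesssim \int_K\!\big(\nu(T_h)\mathbf{e}_{\mathbf{u}}+(\nu(T)-\nu(T_h))\mathbf{u}-(\mathbf{f}-\mathscr{P}_K\mathbf{f})\big)\cdot\mathbf{curl}\,\vartheta\,\mathsf{d}\mathbf{x}.
\end{equation*}
Invoking \eqref{eq:nu}, the Lipschitz property of $\nu$, H\"older's inequality together with the bound $\mathbf{u}\in\mathbf{L}^{p'}(\Omega)$ inherent in \eqref{eq:global_rel_assump}, the embedding $W_0^{1,p}(\Omega)\hookrightarrow L^{2p/(2-p)}(\Omega)$, and the inverse estimate $\|\mathbf{curl}\,\vartheta\|_{\mathbf{L}^2(K)}\lesssim h_K^{-1}\|\tilde{\mathfrak{R}}_K\|_{L^2(K)}$, I obtain $h_K\|\mathfrak{R}_K\|_{L^2(K)}\lesssim\|\mathbf{e}_{\mathbf{u}}\|_{\mathbf{L}^2(K)}+\|e_T\|_{L^{2p/(2-p)}(K)}+h_K\|\mathbf{curl}(\mathbf{f}-\mathscr{P}_K\mathbf{f})\|_{L^2(K)}$.

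Next I would bound $h_K^{1/2}\|\mathfrak{J}_\gamma\|_{L^2(\gamma)}$. Analogously, I introduce the polynomial surrogate $\tilde{\mathfrak{J}}_\gamma:=\llbracket(\tilde{\mathbf{f}}-\nu(T_h)\mathbf{u}_h)\cdot\boldsymbol\tau\rrbracket$ with $\tilde{\mathbf{f}}=\mathscr{P}_{K'}\mathbf{f}$ on each $K'\in\mathcal{N}_\gamma$, so that the discrepancy $\mathfrak{J}_\gamma-\tilde{\mathfrak{J}}_\gamma=\llbracket(\mathbf{f}-\mathscr{P}\mathbf{f})\cdot\boldsymbol\tau\rrbracket$ precisely matches the last term of \eqref{eq:local_efficiency_darcya2}. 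Choosing $\vartheta=\varphi_\gamma\Pi_\gamma(\tilde{\mathfrak{J}}_\gamma)$ extended by zero outside $\mathcal{N}_\gamma$ and using the error identity restricted to $\mathcal{N}_\gamma$, the equivalences of Lemma \ref{lemma:bubble_properties}, and the residual bound from the previous step applied on each $K'\in\mathcal{N}_\gamma$, I arrive at a control of $h_K^{1/2}\|\tilde{\mathfrak{J}}_\gamma\|_{L^2(\gamma)}$ by the terms appearing on the right-hand side of \eqref{eq:local_efficiency_darcya2}. Summing over $\gamma\in\mathscr{S}_K$ and combining with the residual estimate yields \eqref{eq:local_efficiency_darcya2}.

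The main obstacle is the careful treatment of the data oscillation $\mathbf{f}-\mathscr{P}\mathbf{f}$ in the presence of the $\mathbf{curl}$ operator: since $\mathbf{f}$ is not piecewise polynomial, one must first split off the polynomial surrogates $\tilde{\mathfrak{R}}_K$ and $\tilde{\mathfrak{J}}_\gamma$ so that the bubble-function equivalences of Lemma \ref{lemma:bubble_properties} can be invoked, and then integrate by parts to transfer the curl onto the polynomial test function $\vartheta$ rather than onto $\mathbf{f}-\mathscr{P}_K\mathbf{f}$. The hypothesis that $\nu|_K$ is polynomial is essential, as it guarantees that $\nu(T_h)\mathbf{u}_h$ is polynomial on $K$; apart from this structural issue, the remaining computations are routine and mirror those in the proof of Theorem \ref{thm:efficiencydarcy} and in \cite[Theorem 4.3]{MR4041519}.
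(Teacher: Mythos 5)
Your proposal follows essentially the same route as the paper: the same curl-based error identity tested with $\mathbf{v}=\mathbf{curl}\,\vartheta$, the same polynomial surrogates (your $\tilde{\mathfrak{R}}_K=-\mathbf{curl}(\nu(T_h)\mathbf{u}_h)$ agrees with the paper's $\mathbf{curl}(\mathscr{P}_K\mathbf{f}-\nu(T_h)\mathbf{u}_h)$ because $\mathscr{P}_K\mathbf{f}$ is constant on $K$), the same bubble-function choices $\vartheta=\varphi_K\tilde{\mathfrak{R}}_K$ and $\vartheta=\varphi_\gamma\Pi_\gamma(\tilde{\mathfrak{J}}_\gamma)$, and the same role for the hypothesis that $\nu|_K$ is polynomial. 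One small inconsistency: integrating by parts in $\int_K\mathbf{curl}(\mathbf{f}-\mathscr{P}_K\mathbf{f})\,\vartheta\,\mathsf{d}\mathbf{x}$ to move the curl onto $\vartheta$ produces, after the inverse estimate $\|\mathbf{curl}\,\vartheta\|_{\mathbf{L}^2(K)}\lesssim h_K^{-1}\|\tilde{\mathfrak{R}}_K\|_{L^2(K)}$, the oscillation term $\|\mathbf{f}-\mathscr{P}_K\mathbf{f}\|_{\mathbf{L}^2(K)}$ rather than the $h_K\|\mathbf{curl}(\mathbf{f}-\mathscr{P}_K\mathbf{f})\|_{L^2(K)}$ you then assert and that appears in \eqref{eq:local_efficiency_darcya2}; the two are not comparable in general. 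The paper instead bounds this term directly, $\int_K\mathbf{curl}(\mathbf{f}-\mathscr{P}_K\mathbf{f})\,\vartheta\,\mathsf{d}\mathbf{x}\leq\|\mathbf{curl}(\mathbf{f}-\mathscr{P}_K\mathbf{f})\|_{L^2(K)}\|\varphi_K\tilde{\mathfrak{R}}_K\|_{L^2(K)}\lesssim\|\mathbf{curl}(\mathbf{f}-\mathscr{P}_K\mathbf{f})\|_{L^2(K)}\|\tilde{\mathfrak{R}}_K\|_{L^2(K)}$, which yields exactly the stated form; dropping your integration by parts fixes the discrepancy and everything else goes through as you describe.
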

\begin{proof}
Let $\mathbf{v}\in \mathbf{V}(\Omega)$. In view of Lemma \ref{lemma:aux_curl_result}, we deduce the existence of a unique function $\vartheta\in H_0^1(\Omega)$ such that $\mathbf{v} = \mathbf{curl~}\vartheta$ together with the estimate $ \|\vartheta\|_{H^1(\Omega)}\lesssim \|\mathbf{v}\|_{\mathbf{L}^2(\Omega)}$. With this setting at hand, we invoke similar arguments to the ones utilized to obtain \eqref{eq:error_equation_darcy_1} to arrive at the identity
\begin{multline}\label{eq:error_equation_darcy_4}
\int_\Omega \nu(T_h)\mathbf{e}_{\mathbf{u}}\cdot\mathbf{v}\mathsf{d}\mathbf{x} + \int_\Omega (\nu(T)-\nu(T_h))\mathbf{u}\cdot\mathbf{v}\mathsf{d}\mathbf{x} \\
= \sum_{K\in\mathscr{T}}\left[ \int_K \mathbf{curl}(\mathscr{P}_{K}\mathbf{f}-\nu(T_h)\mathbf{u}_h)\vartheta\mathsf{d}\mathbf{x} + \int_K \mathbf{curl}(\mathbf{f}-\mathscr{P}_{K}\mathbf{f})\vartheta\mathsf{d}\mathbf{x} \right] \\
+ \sum_{\gamma\in\mathscr{S}}\left[\int_\gamma \llbracket (\mathbf{f}-\mathscr{P}_{K}\mathbf{f})\cdot\boldsymbol\tau\rrbracket\vartheta\mathsf{d}s + \int_\gamma \llbracket (\mathscr{P}_{K}\mathbf{f}\!-\!\nu(T_h)\mathbf{u}_h)\cdot\boldsymbol\tau\rrbracket\vartheta\mathsf{d}s\right].
\end{multline}

We now proceed in two steps.

\underline{Step 1.} Let $K\in\mathscr{T}_h$. The goal of this step is to control the residual term  $h_K\|\mathfrak{R}_{K}\|_{L^2(K)}$ in \eqref{eq:local_indicator_darcy_I}. To accomplish this task, we begin with a simple application of a triangle inequality to obtain
\begin{equation}\label{eq:residual_triangle_curl}
h_K\|\mathfrak{R}_{K}\|_{L^2(K)} 
\leq
h_K\|\mathbf{curl}(\mathscr{P}_{K}\mathbf{f}-\nu(T_h)\mathbf{u}_h)\|_{L^2(K)} + h_K\|\mathbf{curl}(\mathbf{f}-\mathscr{P}_{K}\mathbf{f})\|_{L^2(K)}.
\end{equation}
It thus suffices to control the first term on the right-hand side of the previous expression. To accomplish this task, we define $\tilde{\mathfrak{R}}_K:=\mathbf{curl}(\mathscr{P}_{K}\mathbf{f}-\nu(T_h)\mathbf{u}_h)|_{K}$. We thus set $\mathbf{v} = \mathbf{curl}(\tilde{\mathfrak{R}}_{K}\varphi_K)$ and $\vartheta=\tilde{\mathfrak{R}}_{K}\varphi_K$ in \eqref{eq:error_equation_darcy_4}, invoke standard inverse estimates, and basic properties of the bubble function $\varphi_K$ to obtain
\begin{multline*}
\|\tilde{\mathfrak{R}}_{K}\|_{L^2(K)}
\lesssim 
 h_K^{-1}\|\nu(T)-\nu(T_h)\|_{L^{\frac{2p}{2-p}}(K)}\|\mathbf{u}\|_{\mathbf{L}^{p\prime}(K)} \\+h_K^{-1}\|\mathbf{e}_{\mathbf{u}}\|_{\mathbf{L}^2(K)} + \|\mathbf{curl}(\mathbf{f}-\mathscr{P}_{K}\mathbf{f})\|_{L^2(K)}.
\end{multline*}
The Lipschitz property that $\nu$ satisfies combined with assumption \eqref{eq:global_rel_assump} thus yield
\begin{equation}\label{eq:estimate_residual_curl}
h_K\|\tilde{\mathfrak{R}}_K\|_{L^2(K)} 
\lesssim
\|\mathbf{e}_{\mathbf{u}}\|_{\mathbf{L}^2(K)} + \|e_{T}\|_{L^{\frac{2p}{2-p}}(K)} 
+  h_K\|\mathbf{curl}(\mathbf{f}-\mathscr{P}_{K}\mathbf{f})\|_{L^2(K)}.
\end{equation}
The desired bound for the residual term thus follows directly from \eqref{eq:residual_triangle_curl} and \eqref{eq:estimate_residual_curl}.

\underline{Step 2.} Let $K\in\mathscr{T}_h$ and $\gamma\in\mathscr{S}_K$. We define $\tilde{\mathfrak{J}}_{\gamma}=\llbracket (\mathscr{P}_{K}\mathbf{f}-\nu(T_h)\mathbf{u}_h)\cdot\boldsymbol\tau\rrbracket$ and bound $h_K^{\frac{1}{2}}\|\mathfrak{J}_{\gamma}\|_{L^2(\gamma)}$ in \eqref{eq:local_indicator_darcy_I}. 
Invoke a triangle inequality to arrive at
\begin{equation}\label{eq:residual_triangle_curl_jump}
h_K^{\frac{1}{2}}\|\mathfrak{J}_{\gamma}\|_{L^2(\gamma)}
\leq
h_K^{\frac{1}{2}}\|\tilde{\mathfrak{J}}_{\gamma}\|_{L^2(\gamma)} + h_K^{\frac{1}{2}}\|\llbracket (\mathbf{f}-\mathscr{P}_{K}\mathbf{f})\cdot\boldsymbol\tau\rrbracket\|_{L^2(\gamma)}.
\end{equation}
In view of \eqref{eq:residual_triangle_curl_jump} it thus suffices to bound the term $h_K^{\frac{1}{2}}\|\tilde{\mathfrak{J}}_{\gamma}\|_{L^2(\gamma)}$. To accomplish this task, we set $\vartheta=\Pi_{\gamma}(\tilde{\mathfrak{J}}_{\gamma})\varphi_\gamma$, where  $\varphi_\gamma$ corresponds to the bubble function defined in \eqref{def:standard_bubbles}, and $\mathbf{v} = \mathbf{curl}( \Pi_{\gamma}(\tilde{\mathfrak{J}}_{\gamma}) \varphi_\gamma)$ in \eqref{eq:error_equation_darcy_4}. Invoke basic properties of $\varphi_\gamma$ to arrive at
\begin{multline*}
\|\tilde{\mathfrak{J}}_{\gamma}\|_{L^2(\gamma)}
\lesssim 
\sum_{K'\in\mathcal{N}_\gamma}\bigg(
h_{K'}^{-\frac{1}{2}} \|\mathbf{e}_{\mathbf{u}}\|_{\mathbf{L}^2({K'})} + h_{K'}^{-\frac{1}{2}} \|\nu(T)-\nu(T_h)\|_{L^{\frac{2p}{2-p}}({K'})}\|\mathbf{u}\|_{\mathbf{L}^{p\prime}({K'})} \\
+ h_{K'}^{\frac{1}{2}} \|\mathbf{curl}(\mathbf{f}-\mathscr{P}_{K'}\mathbf{f})\|_{L^2(K')}  + h_{K'}^{\frac{1}{2}}\|\tilde{\mathfrak{R}}_{K'}\|_{L^{2}(K')}\bigg) + \|\llbracket (\mathbf{f}-\mathscr{P}_{K'}\mathbf{f})\cdot\boldsymbol\tau\rrbracket\|_{L^2(\gamma)},
\end{multline*}
where we have also used the estimate $\|\varphi_\gamma\Pi_{\gamma}(\tilde{\mathfrak{J}}_{\gamma})\|_{L^2(K')}\lesssim h_K^{\frac{1}{2}}\|\tilde{\mathfrak{J}}_{\gamma}\|_{L^2(\gamma)}$. Consequently, the Lipschitz property that $\nu$ satisfies, estimate \eqref{eq:estimate_residual_curl}, and the regularity assumption $\mathbf{u}\in \mathbf{L}^{p\prime}(\Omega)$ yield
\begin{multline*}
h_{K}^{\frac{1}{2}}\|\tilde{\mathfrak{J}}_{\gamma}\|_{L^2(\gamma)}
\lesssim \! 
\sum_{K'\in\mathcal{N}_\gamma} \!  \! \bigg(\|\mathbf{e}_{\mathbf{u}}\|_{\mathbf{L}^2({K'})} + \|e_{T}\|_{L^{\frac{2p}{2-p}}({K'})} \\
 +  h_{K'}\|\mathbf{curl}(\mathbf{f}-\mathscr{P}_{K'}\mathbf{f})\|_{L^2(K')} \bigg) + h_K^{\frac{1}{2}}\|\llbracket (\mathbf{f}-\mathscr{P}_{K}\mathbf{f})\cdot\boldsymbol\tau\rrbracket\|_{L^2(\gamma)}.
\end{multline*}

The combination of the estimates obtained in Steps 1 and 2 concludes the proof.
\end{proof}


\section{Numerical experiments}
\label{sec:numericalexperiments}

In this section, we present a series of numerical examples that illustrate the performance of the devised error estimator $\mathsf{E}_{\T}$ defined in \eqref{def:total_error_estimator}. The examples  have been carried out with the help of a  code that we implemented using \texttt{C++}.
All matrices have been assembled exactly and global linear systems were solved using the multifrontal massively parallel sparse direct solver (MUMPS) \cite{MUMPS1,MUMPS2}. The right-hand sides, local indicators, and the error estimator were computed by a quadrature formula which is exact for polynomials of degree 19. To visualize finite element approximations we have used the open source application ParaView \cite{Ahrens2005ParaViewAE,Ayachit2015ThePG}.

For a given partition $\mathscr{T}_h$, we solve the discrete system \eqref{eq:model_discrete}, within the discrete setting $\mathbf{X}_{h}\times Q_h\times V_h$, by using the iterative strategy described in \textbf{Algorithm 1}. Once a discrete solution is obtained, we compute, for each $K \in \mathscr{T}_h$, the local error indicator $\mathsf{E}_{K}$, defined by
\begin{equation}\label{def:total_indicator}
\mathsf{E}_K:=\mathpzc{E}_{p,K}+\mathfrak{E}_{K}+\mathscr{E}_K,
\end{equation}
to drive the adaptive procedure described in \textbf{Algorithm 2}. A sequence of adaptively refined meshes is thus generated from the initial meshes shown in Figure \ref{fig:mesh}.

\begin{figure}[!ht]
\centering
\hspace{-1.0cm}
\begin{minipage}[b]{0.35\textwidth}\centering
\includegraphics[width=1.8cm,height=1.8cm,scale=0.66]{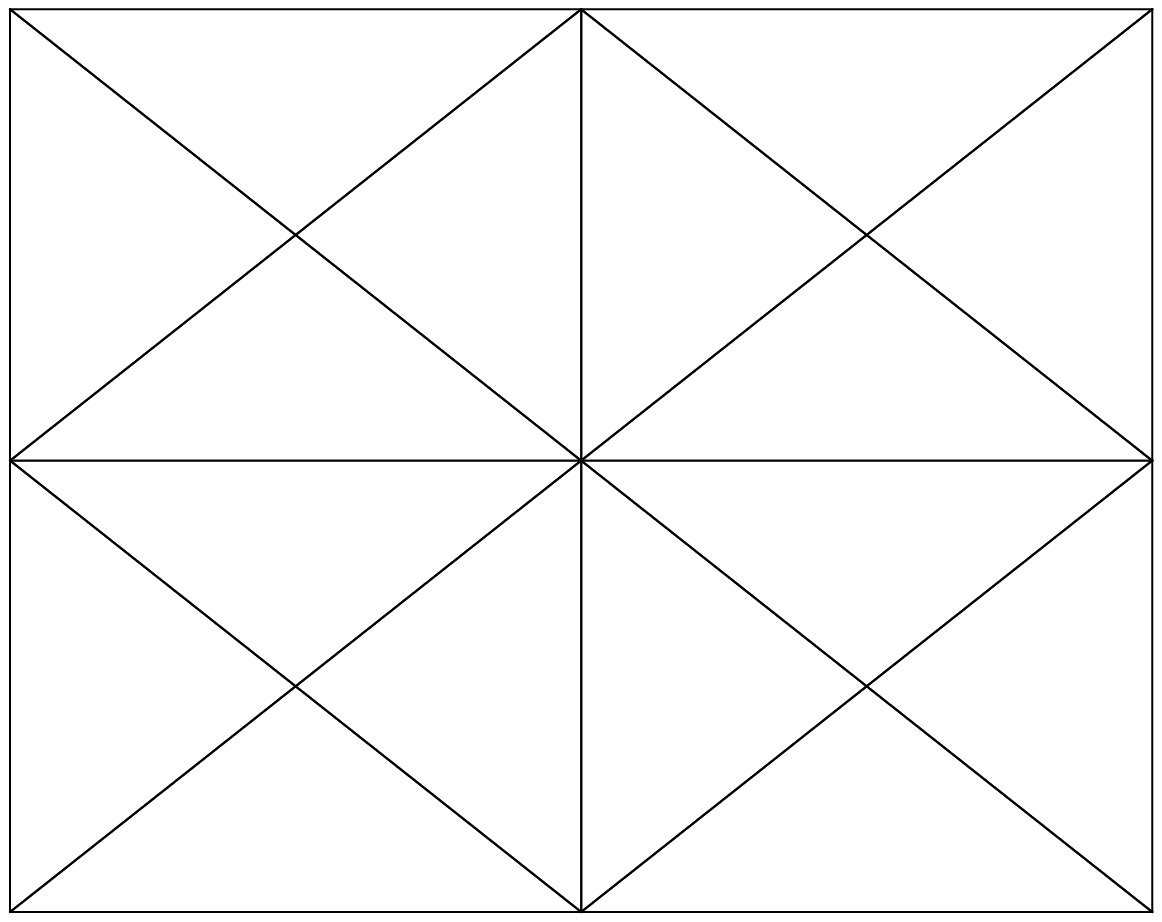} \\
\tiny{(A.1)}
\end{minipage}
\qquad
\begin{minipage}[b]{0.35\textwidth}\centering
\includegraphics[width=1.8cm,height=1.8cm,scale=0.66]{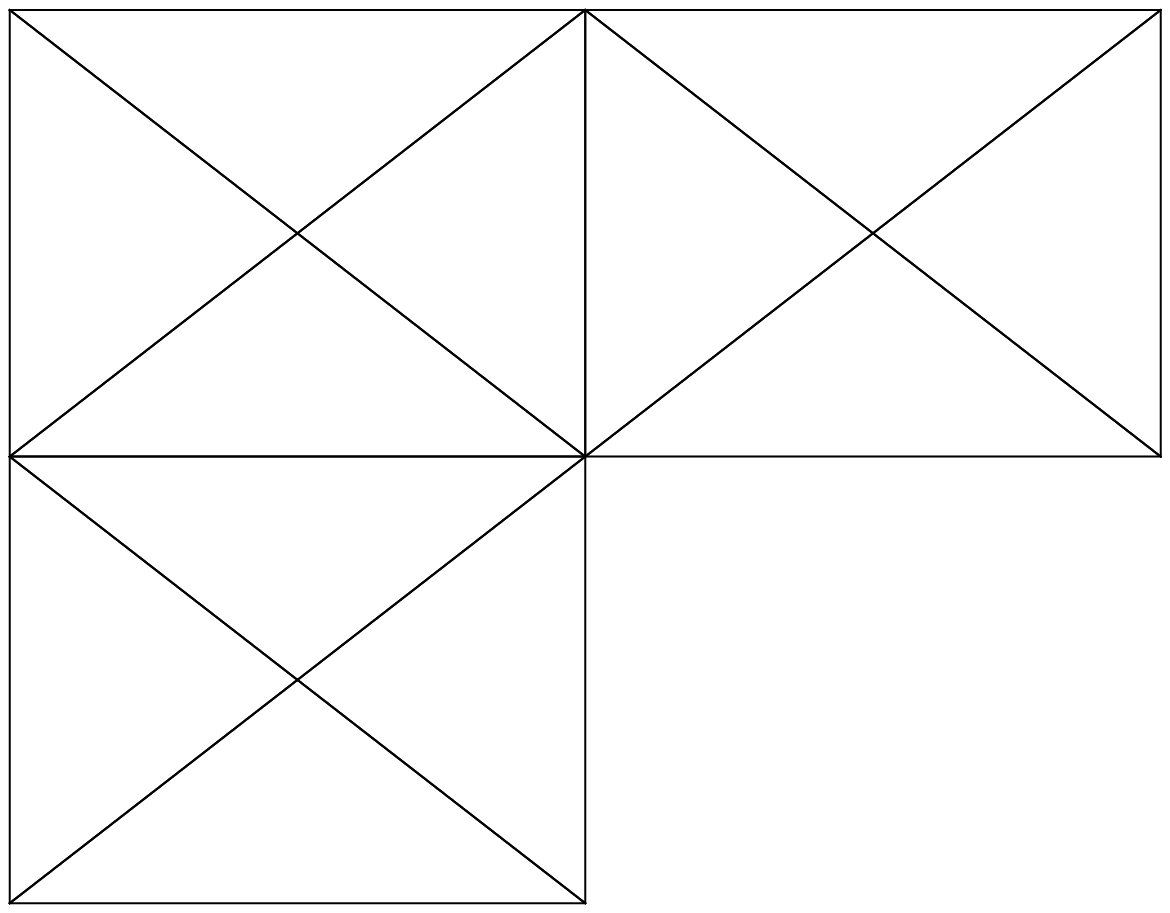} \\
\qquad \tiny{(A.2)}
\end{minipage}
\caption{The initial meshes used in the adaptive algorithm when the domain $\Omega$ is a square (Example 1) or a two-dimensional $L$-shaped (Example 2).}
\label{fig:mesh}
\end{figure}

In the numerical experiments that we perform we go beyond the presented theory and consider a series of Dirac delta sources on the right--hand side of the temperature equation. To be precise, we consider $\textnormal{g} = \sum_{z\in\mathcal{D}}\delta_z$. Here, $\mathcal{D}$ corresponds to a finite ordered subset of $\Omega$ with cardinality $\#\mathcal{D}$. Within this setting, we modify the error estimator $\mathpzc{E}_{p,\T}$, associated to the discretization of the heat equation, as follows:
\begin{equation}\label{eq:new_total_estimator_temp}
\mathpzc{E}_{p,\T}:=\left( \sum_{K\in\mathscr{T}} \mathpzc{E}_{p,K}^p\right)^{\frac{1}{p}},\qquad p<2,
\end{equation}
where, for each $K\in\mathscr{T}$, the local error indicators $\mathpzc{E}_{p,K}$ are given now as follows: if $z\in\mathcal{D}\cap K$ and $z$ is not a vertex of $K$, then
\begin{equation}
\label{eq:local_indicator_tempI_seriesdirac}
\mathpzc{E}_{p,K}:=
\left( 
\sum_{z\in\mathcal{D}\cap K}h_K^{2-p} 
+
h_K^p \| \mathpzc{R}_K
\|_{L^p(K)}^p 
+
h_K \| \mathpzc{J}_{\gamma}
\|_{L^p(\partial K \setminus \partial \Omega)}^p
\right)^{\frac{1}{p}}.
\end{equation}
If $z\in\mathcal{D}\cap K$ and $z$ is a vertex of $K$, then
\begin{equation}\label{eq:local_indicatorII_seriesdirac}
\mathpzc{E}_{p,K}:=
\left( 
h_K^p \| \mathpzc{R}_K  \|_{L^p(K)}^p 
+
h_K \|
\mathpzc{J}_{\gamma}
\|_{L^p(\partial K \setminus \partial \Omega)}^p
\right)^{\frac{1}{p}}.
\end{equation}
If $\mathcal{D}\cap K=\emptyset$, then the indicator $\mathpzc{E}_{p,K}$ is defined as in \eqref{eq:local_indicatorII_seriesdirac}. We notice that the previous modification is not needed if $\#\mathcal{D}=1$; \eqref{eq:new_total_estimator_temp} and \eqref{eq:error_estimator_heat} coincide.

\begin{algorithm}[ht]
\caption{\textbf{Iterative Scheme}}
\label{Algorithm1}
\textbf{Input:} Initial guess $(\mathbf{u}_{h}^{0},\mathsf{p}_{h}^{0},T_{h}^{0}) \in \mathbf{X}_h \times Q_h \times V_h$ and tol=$10^{-8}$;
\\
$\boldsymbol{1}$: For $i\geq 0$, find $(\mathbf{u}_{h}^{i+1}, \mathsf{p}_{h}^{i+1}) \in \mathbf{X}_{h}\times Q_h$ such that
\begin{equation*}
\begin{array}{rcll}
\displaystyle\int_\Omega (\nu(T_{h}^{i}) \mathbf{u}_{h}^{i+1}\cdot \mathbf{v}_h-  \mathsf{p}_{h}^{i+1} \textnormal{ div }\mathbf{v}_h)\mathsf{d}\mathbf{x}& = & 	\displaystyle\int_\Omega\mathbf{f}\cdot \mathbf{v}_h\mathsf{d}\mathbf{x} \quad &\forall {\mathbf{v}_h}\in \mathbf{X}_h,\\
\displaystyle\int_\Omega \mathsf{q}_h\textnormal{ div }\mathbf{u}_{h}^{i+1}\mathsf{d}\mathbf{x} & = & 0 \quad &\forall \mathsf{q}_h\in Q_h.
\end{array}
\end{equation*}
\\
Then, $T_{h}^{i+1} \in V_h$ is found as the solution to
\[
\displaystyle\int_\Omega(\kappa\nabla T_{h}^{i+1}\cdot \nabla S_h - T_{h}^{i+1} \mathbf{u}_{h}^{i+1}\cdot\nabla S_h)\mathsf{d}\mathbf{x}  =  \sum_{z\in\mathcal{D}}\langle \delta_{z}, S_h\rangle \quad \forall S_h\in V_h.
\]
$\boldsymbol{2}$: If $|(\mathbf{u}_{h}^{i+1}, \mathsf{p}_{h}^{i+1}, T_{h}^{i+1})-(\mathbf{u}_{h}^{i}, \mathsf{p}_{h}^{i}, T_{h}^{i} )|>$tol, set $i \leftarrow i + 1$ and go to step $\boldsymbol{1}$. Here, $|\cdot|$ denotes the Euclidean norm.
\end{algorithm}

\begin{algorithm}[ht]
\caption{\textbf{Adaptive Algorithm.}}
\label{Algorithm2}
\textbf{Input:} Initial mesh $\mathscr{T}_{0}$, finite subset $\mathcal{D} \subset \Omega$, viscosity coefficient $\nu$, thermal diffusivity $\kappa$, and external source $\mathbf{f}$;
\\
$\boldsymbol{1}$: Solve the discrete problem \eqref{eq:model_discrete} by using \textbf{Algorithm} \ref{Algorithm1};
\\
$\boldsymbol{2}$: For each $K\in\mathscr{T}_{i}$ compute the local error indicator $\mathsf{E}_K$ defined in \eqref{def:total_indicator};  
\\
$\boldsymbol{3}$: Mark an element $K\in\mathscr{T}_{i}$ for refinement if;
\begin{equation*}
\mathsf{E}_K>\dfrac{1}{2}\max_{K'\in \mathscr{T}_{i}} \mathsf{E}_{K'};
\end{equation*}
$\boldsymbol{4}$: From step $\boldsymbol{3}$, construct a new mesh $\mathscr{T}_{i+1}$, using a longest edge bisection algorithm. Set $i \leftarrow i + 1$ and go to step $\boldsymbol{1}$;
\end{algorithm}

We consider two problems with homogeneous Dirichlet boundary conditions whose exact solutions are not known. We finally mention that, in the numerical experiments that we perform, we violate the assumption that $\nu$ is piecewise polynomial; cf. Theorem \ref{thm:efficiencydarcy2}.

~\\
\textbf{Example 1:} We let $\Omega=(0,1)^2$, the thermal coefficient $\kappa=1$, the viscosity function $\nu(s):=\sin(s)+2$, the external density force $\mathbf{f}(x_{1},x_{2}):=\left(x_{1}x_{2}(1-x_{1})(1-x_{2}),0\right)$, and 
\begin{equation*}
\mathcal{D} = \{ (0.25,0.25), (0.25,0.75), (0.75,0.25), (0.75,0.75\}.
\end{equation*}
In Figure \ref{fig:test_01} we report the results obtained for Example 1. We present, for different values of the integrability index $p\in\{1.2, 1.4, 1.6, 1.8\}$, experimental rates of convergence for each contribution of the total error estimator, a finite element approximation of the temperature $T_h$, and an adaptively refined mesh. We observe, in subfigures (B.1)--(B.4), that our devised AFEM delivers optimal experimental rates of convergence for all the contributions of the total error estimator $\mathsf{E}_{\T}$ and for all the values considered for the integrability index $p$.  We also observe, in the adaptively refined mesh (B.6), that the adaptive refinement is mostly concentrated on the points where the Dirac measures are supported ($p=1.6$).

\begin{figure}[!ht]
\centering
\psfrag{graficap}{\huge $\mathpzc{E}_{p,\mathscr{T}}$}
\psfrag{graficaT}{\huge $\mathfrak{E}_{\mathscr{T}}$}
\psfrag{graficau}{\huge $\mathscr{E,\mathscr{T}}$}
\psfrag{Ndof(-1/2)}{\Large $\text{Ndof}^{-1/2}$}
\psfrag{pp1}{\Large $p=1.2$}
\psfrag{pp2}{\Large $p=1.4$}
\psfrag{pp3}{\Large $p=1.6$}
\psfrag{pp4}{\Large $p=1.8$}
\psfrag{Tp1}{\Large $p=1.2$}
\psfrag{Tp2}{\Large $p=1.4$}
\psfrag{Tp3}{\Large $p=1.6$}
\psfrag{Tp4}{\Large $p=1.8$}
\psfrag{up1}{\Large $p=1.2$}
\psfrag{up2}{\Large $p=1.4$}
\psfrag{up3}{\Large $p=1.6$}
\psfrag{up4}{\Large $p=1.8$}
\psfrag{etp1}{\Large $p=1.2$}
\psfrag{etp2}{\Large $p=1.4$}
\psfrag{etp3}{\Large $p=1.6$}
\psfrag{etp4}{\Large $p=1.8$}
\begin{minipage}[b]{0.327\textwidth}\centering
\scriptsize{\qquad Estimator $\mathpzc{E}_{p,\mathscr{T}}$}
\includegraphics[trim={0 0 0 0},clip,width=4.15cm,height=3.9cm,scale=0.66]{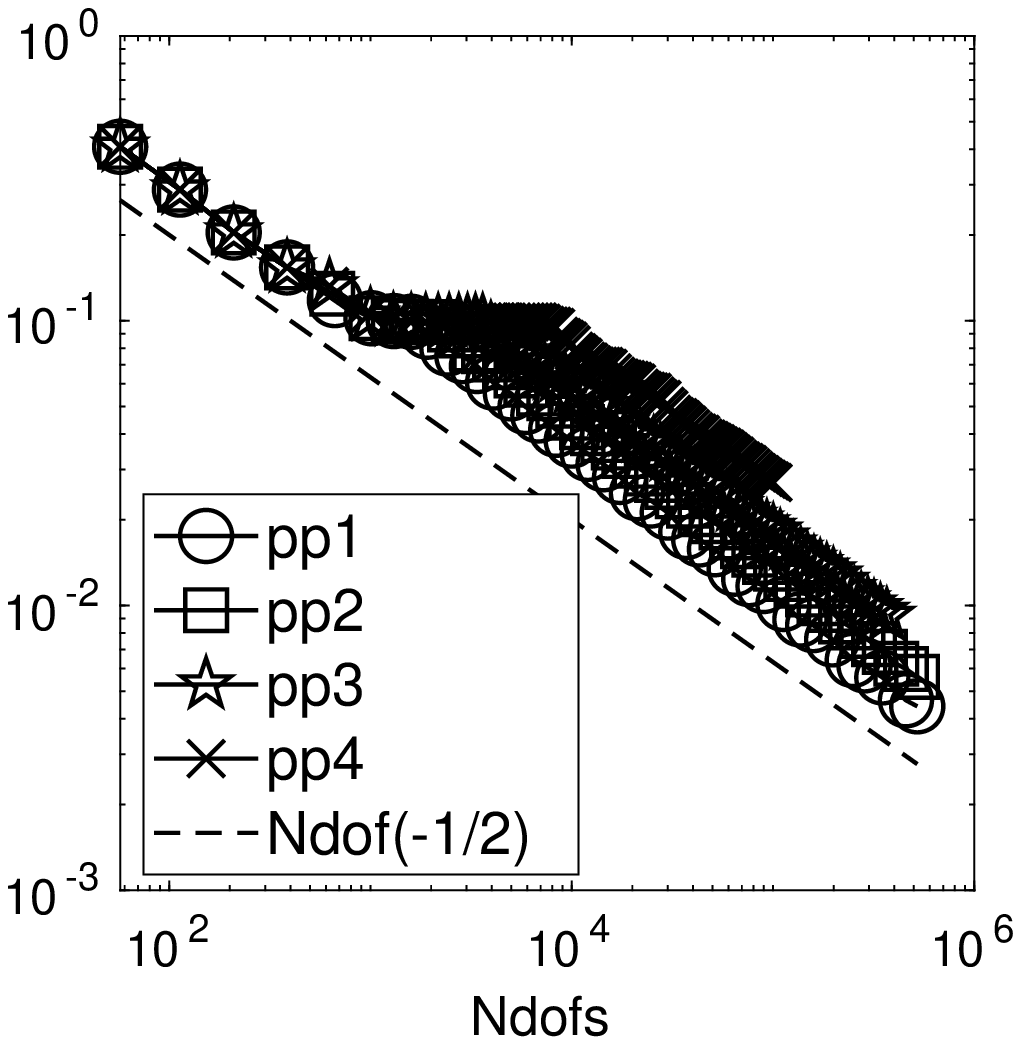} \\
\qquad \tiny{(B.1)}
\end{minipage}
\begin{minipage}[b]{0.327\textwidth}\centering
\scriptsize{\qquad Estimator $\mathfrak{E}_{\mathscr{T}}$}
\includegraphics[trim={0 0 0 0},clip,width=4.15cm,height=3.9cm,scale=0.66]{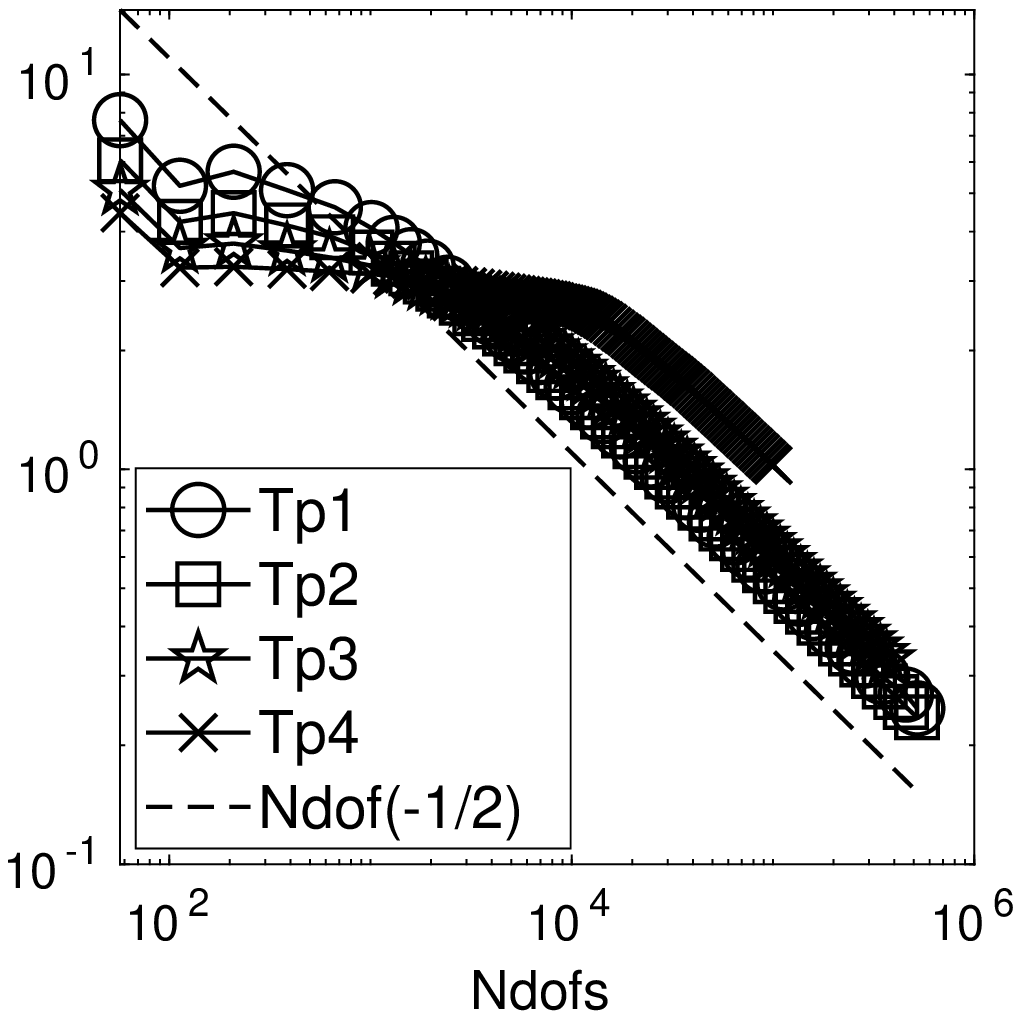} \\
\qquad \tiny{(B.2)}
\end{minipage}
\begin{minipage}[b]{0.327\textwidth}\centering
\scriptsize{\qquad Estimator $\mathscr{E}_{\mathscr{T}}$}
\includegraphics[trim={0 0 0 0},clip,width=4.15cm,height=3.9cm,scale=0.66]{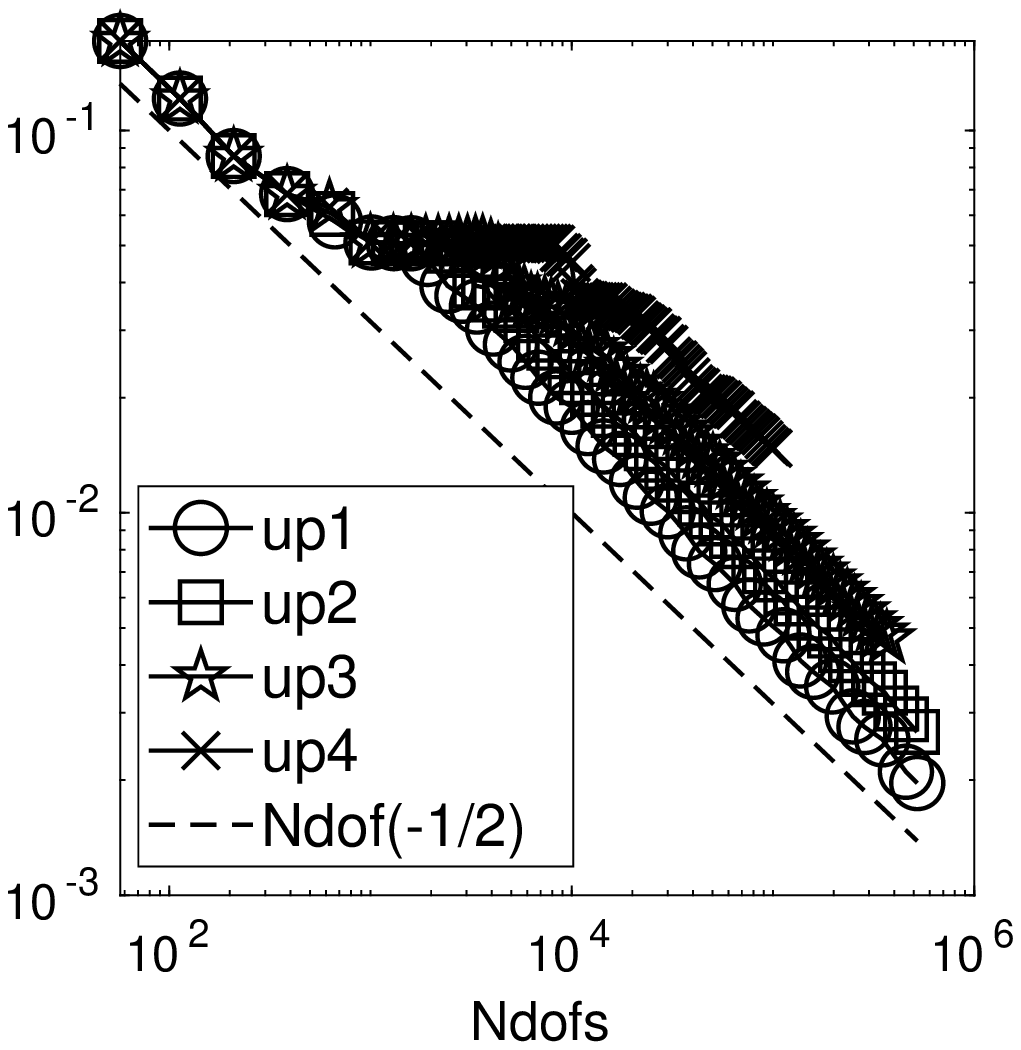} \\
\qquad \tiny{(B.3)}
\end{minipage}
\\~\\
\begin{minipage}[b]{0.327\textwidth}\centering
\scriptsize{\qquad Estimator $\mathsf{E}_{\T}$}
\includegraphics[trim={0 0 0 0},clip,width=4.15cm,height=3.9cm,scale=0.66]{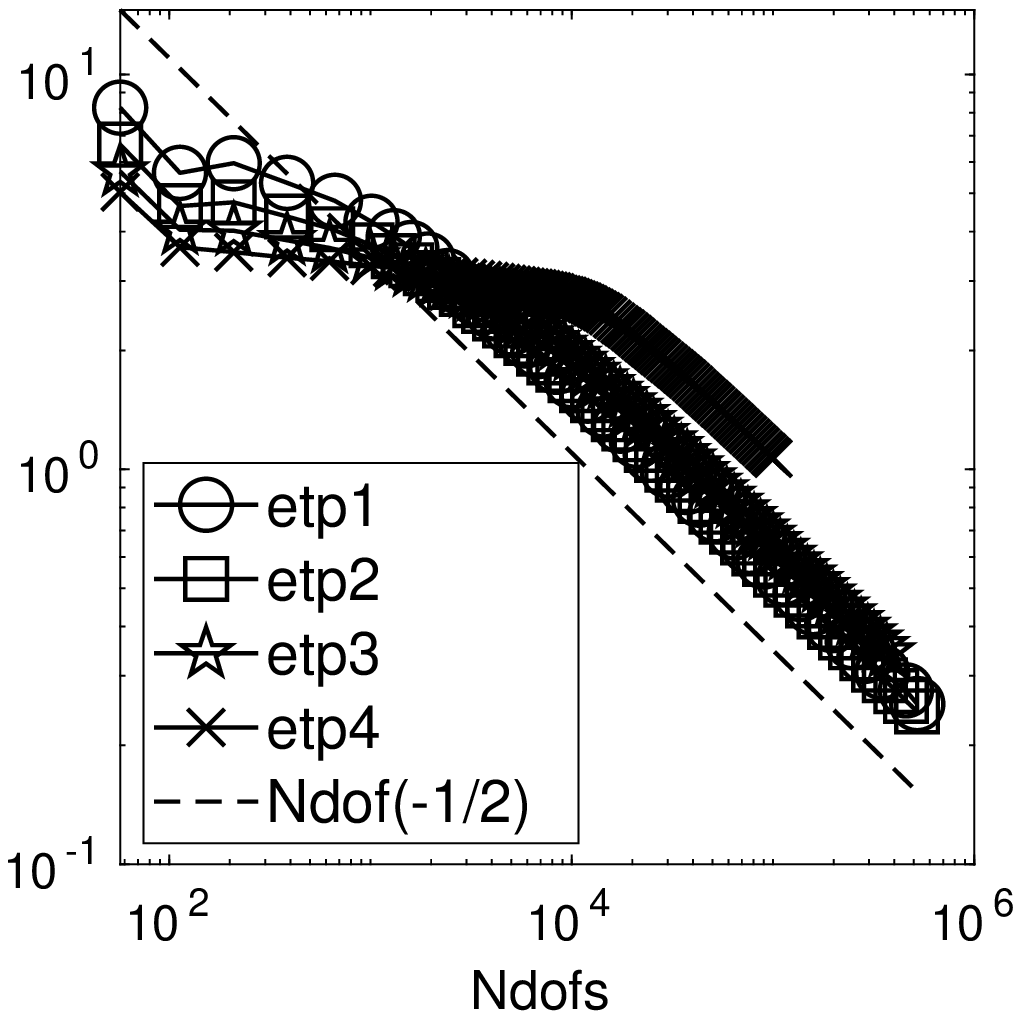} \\
\qquad \tiny{(B.4)}
\end{minipage}
\begin{minipage}[b]{0.327\textwidth}\centering 
\includegraphics[width=4.0cm,height=4.0cm,scale=0.66]{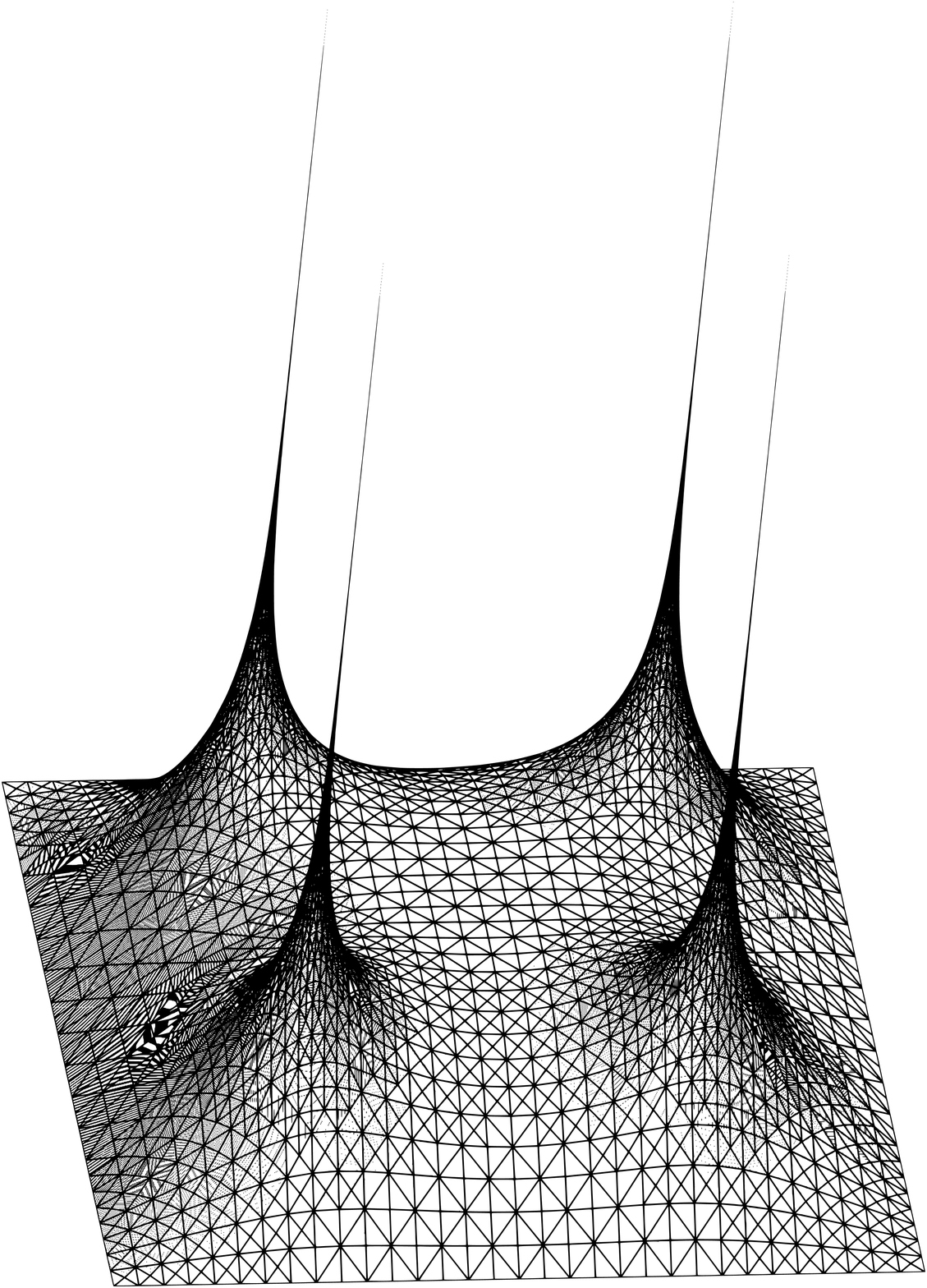} \\
\qquad \tiny{(B.5)}
\end{minipage}
\begin{minipage}[b]{0.327\textwidth}\centering  
\includegraphics[width=4.0cm,height=4.0cm,scale=0.66]{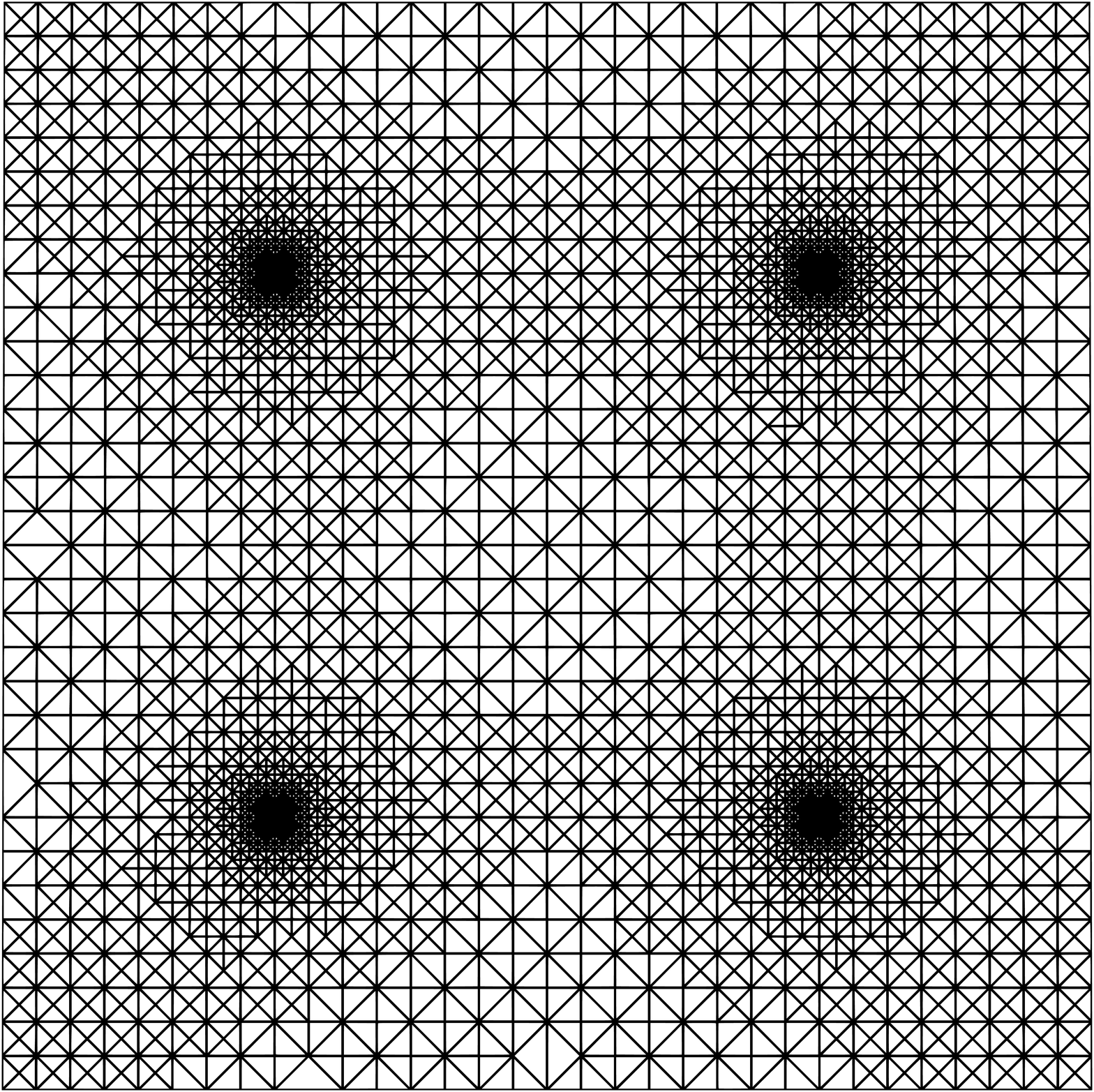} \\
\qquad \tiny{(B.6)}
\end{minipage}
\caption{Ex. 1: Experimental rates of convergence for the error estimators $\mathpzc{E}_{p,\mathscr{T}}$ (B.1), $\mathfrak{E}_{\mathscr{T}}$ (B.2), $\mathscr{E}_{\mathscr{T}}$ (B.3), and $\mathsf{E}_{\T}$ (B.4), for $p \in \{1.2, 1.4, 1.6, 1.8\}$, a finite element approximation of the temperature $T_h$ (B.5), and the mesh obtained after 29 iterations of the adaptive loop for $p = 1.6$ (B.6).}
\label{fig:test_01}
\end{figure}

~\\
\textbf{Example 2:} We let $\Omega=(-1,1)^2 \setminus[0,1)\times [-1,0)$, the thermal coefficient $\kappa=1$, the viscosity function $\nu(s):=e^{-s^2}+1$, the external force $\mathbf{f}(x_{1},x_{2}):=\big(10x_{2}(1-x_{1})(1+x_{1}),5x_{1}(1-x_{2})(1+x_{1})\big)$, and $\mathcal{D} = \{(-0.5,-0.5), (-0.5,0.5), (0.5,0.5)\}$.

In Figure \ref{fig:test_02} we report the results obtained for Example 2. Similar conclusions to the ones presented for Example 1 can be derived. In particular, we observe optimal experimental rates of convergence for all the individual contributions of the total error estimator $\mathsf{E}_{\T}$ and for all  the values considered for the integrability index $p$ (C.1)--(C.4). We also observe, in subfigure (C.6), that the adaptive refinement is mostly concentrated on the points where the Dirac measures are supported and near to the region of the domain that involves a geometric singularity ($p=1.6$).


\begin{figure}[!ht]
\centering
\psfrag{graficap}{\huge $\mathpzc{E}_{p}$}
\psfrag{graficaT}{\huge $\mathfrak{E}_{\mathscr{T}}$}
\psfrag{graficau}{\huge $\mathscr{E}$}
\psfrag{Ndof(-1/2)}{\Large $\text{Ndof}^{-1/2}$}
\psfrag{pp1}{\Large $p=1.2$}
\psfrag{pp2}{\Large $p=1.4$}
\psfrag{pp3}{\Large $p=1.6$}
\psfrag{pp4}{\Large $p=1.8$}
\psfrag{Tp1}{\Large $p=1.2$}
\psfrag{Tp2}{\Large $p=1.4$}
\psfrag{Tp3}{\Large $p=1.6$}
\psfrag{Tp4}{\Large $p=1.8$}
\psfrag{up1}{\Large $p=1.2$}
\psfrag{up2}{\Large $p=1.4$}
\psfrag{up3}{\Large $p=1.6$}
\psfrag{up4}{\Large $p=1.8$}
\psfrag{etp1}{\Large $p=1.2$}
\psfrag{etp2}{\Large $p=1.4$}
\psfrag{etp3}{\Large $p=1.6$}
\psfrag{etp4}{\Large $p=1.8$}
\begin{minipage}[b]{0.327\textwidth}\centering
\scriptsize{\qquad Estimator $\mathpzc{E}_{p,\mathscr{T}}$}
\includegraphics[trim={0 0 0 0},clip,width=4.15cm,height=3.9cm,scale=0.66]{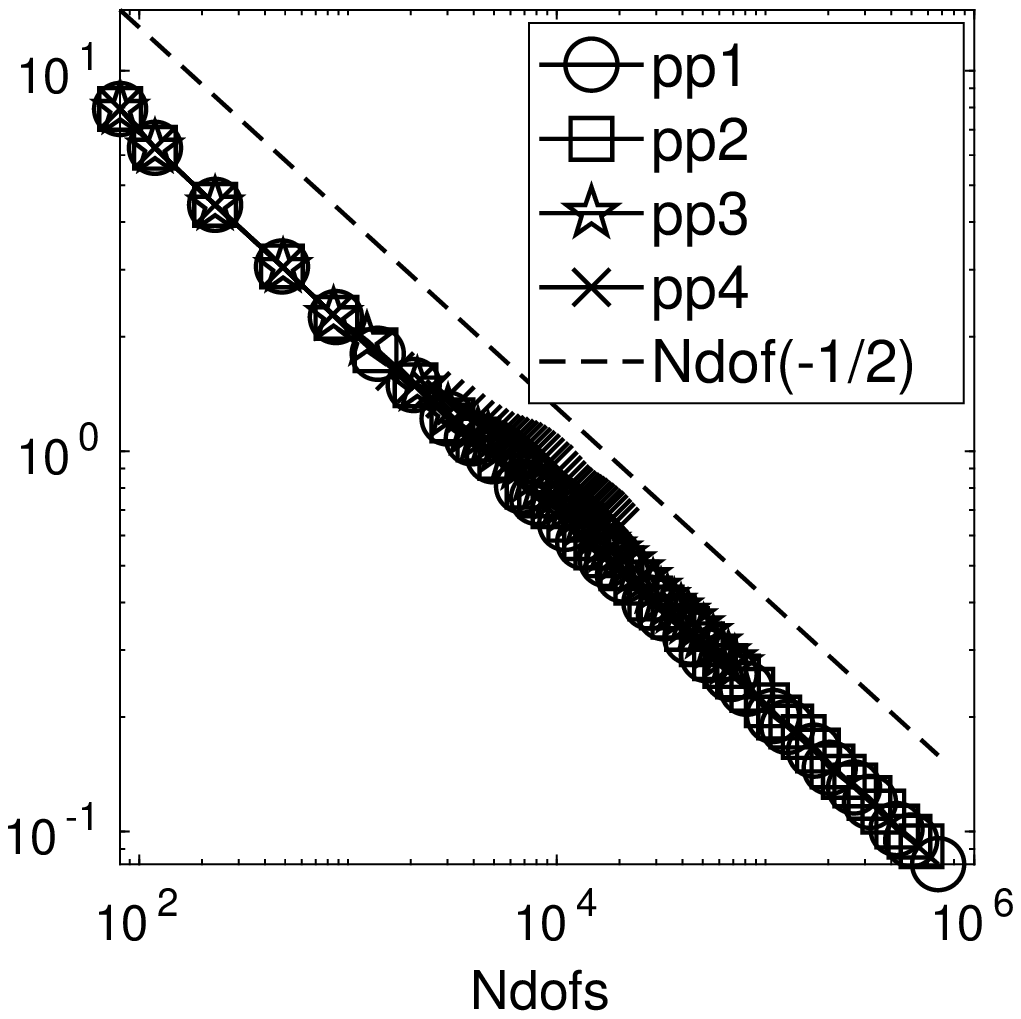} \\
\qquad \tiny{(C.1)}
\end{minipage}
\begin{minipage}[b]{0.327\textwidth}\centering
\scriptsize{\qquad Estimator $\mathfrak{E}_{\mathscr{T}}$}
\includegraphics[trim={0 0 0 0},clip,width=4.15cm,height=3.9cm,scale=0.66]{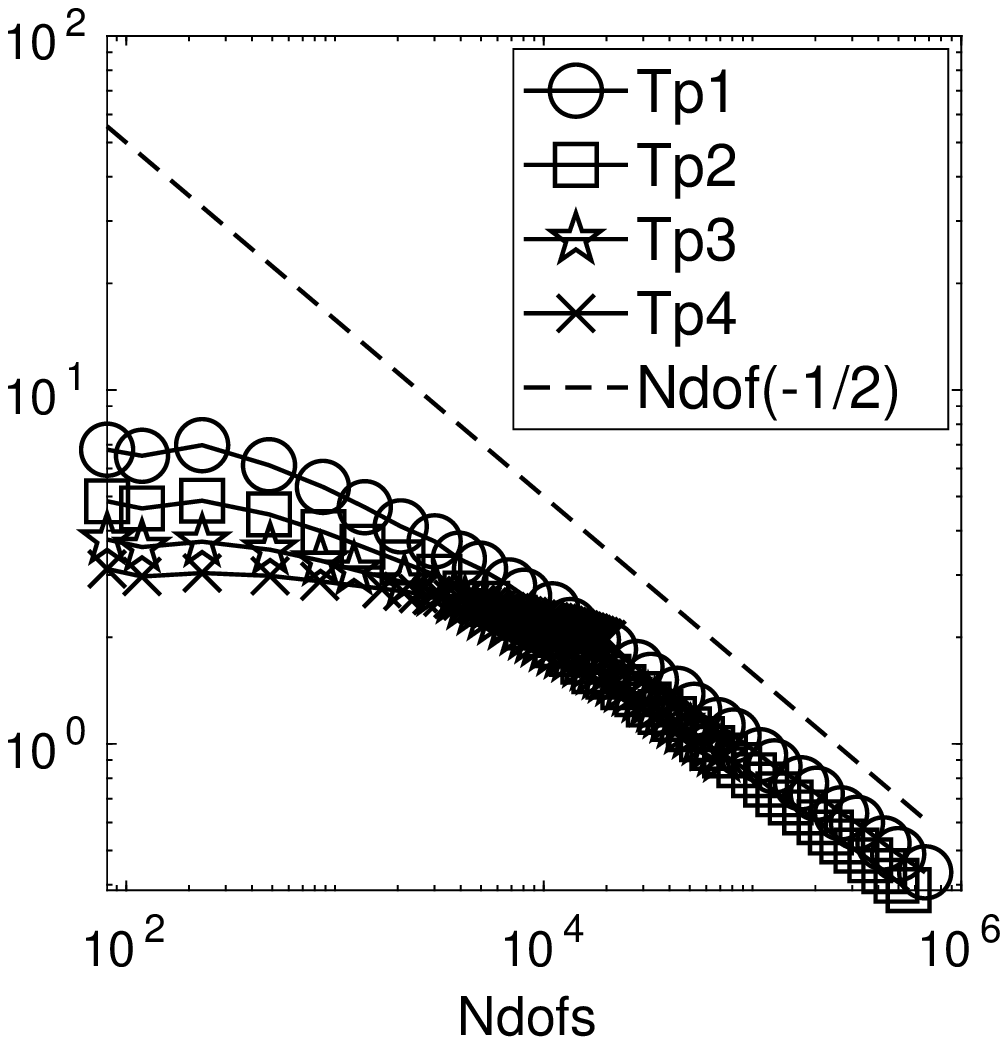} \\
\qquad \tiny{(C.2)}
\end{minipage}
\begin{minipage}[b]{0.327\textwidth}\centering
\scriptsize{\qquad Estimator $\mathscr{E}_{\mathscr{T}}$}
\includegraphics[trim={0 0 0 0},clip,width=4.15cm,height=3.9cm,scale=0.66]{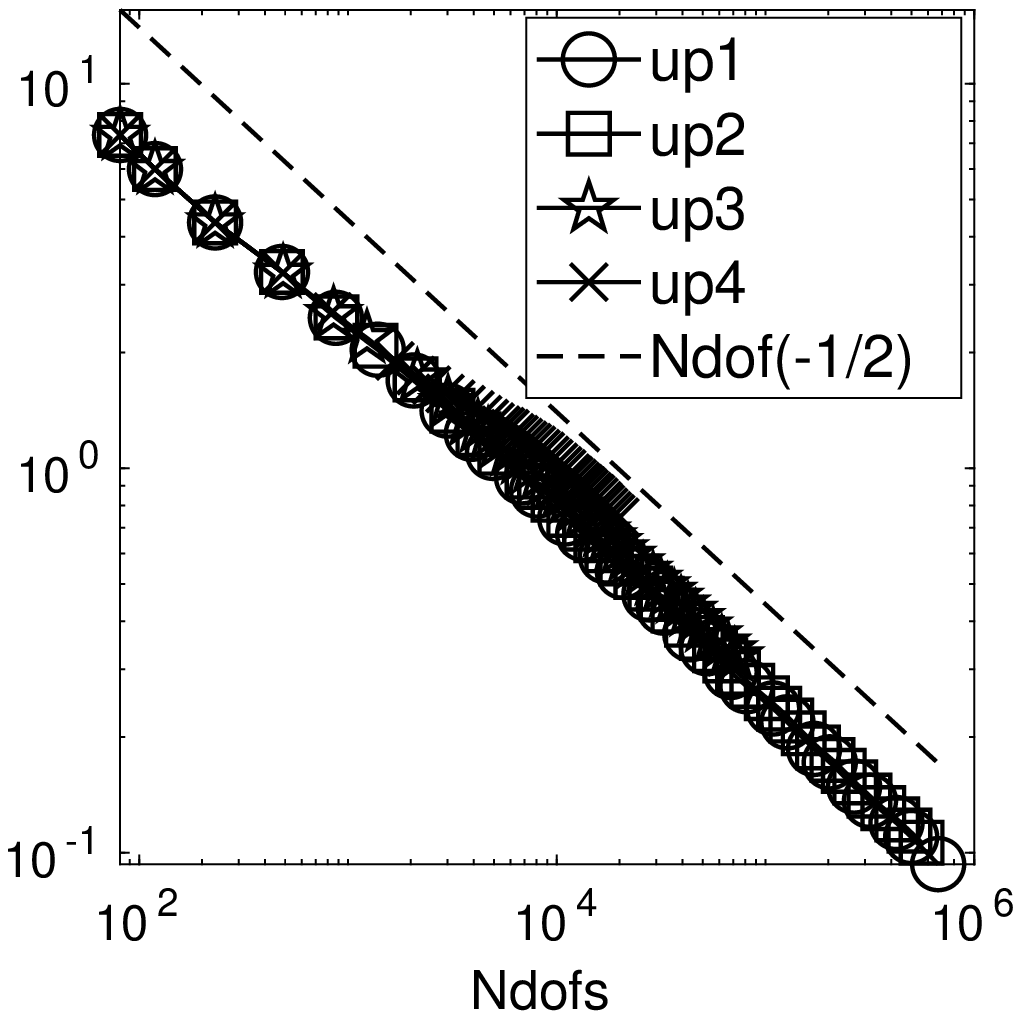} \\
\qquad \tiny{(C.3)}
\end{minipage}
\\~\\
\begin{minipage}[b]{0.327\textwidth}\centering
\scriptsize{\qquad Estimator $\mathsf{E}_{\T}$}
\includegraphics[trim={0 0 0 0},clip,width=4.15cm,height=3.9cm,scale=0.66]{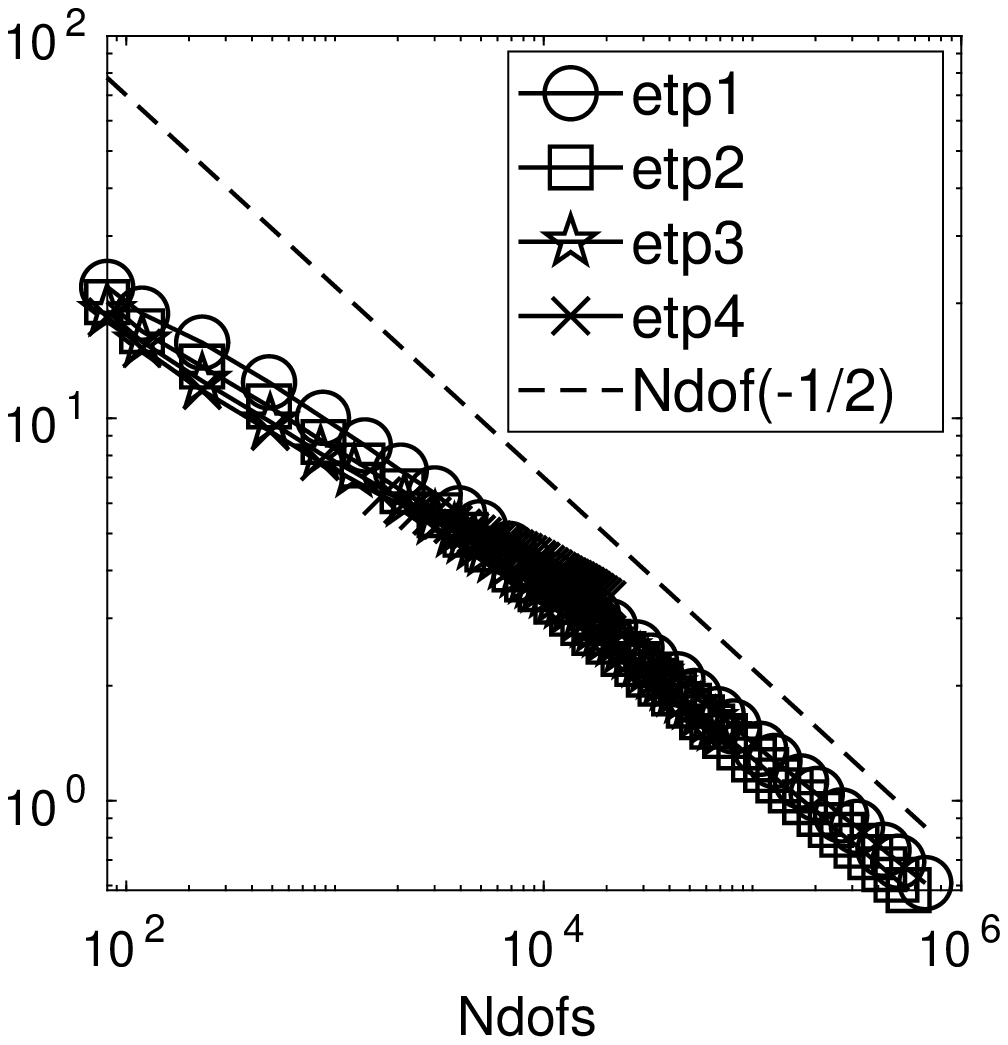} \\
\qquad \tiny{(C.4)}
\end{minipage}
\begin{minipage}[b]{0.327\textwidth}\centering 
\includegraphics[width=4.0cm,height=4.0cm,scale=0.66]{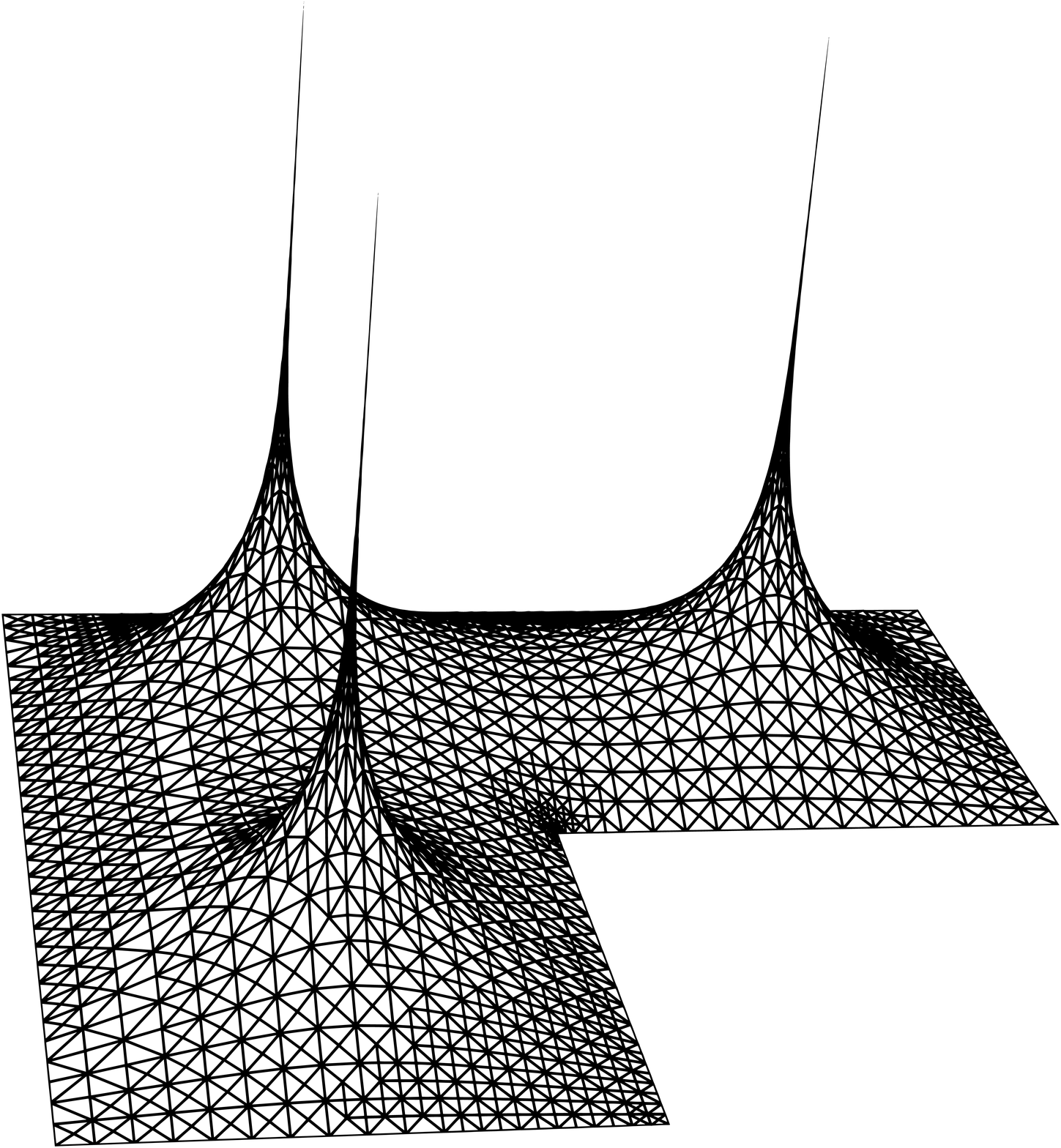} \\
\qquad \tiny{(C.5)}
\end{minipage}
\begin{minipage}[b]{0.327\textwidth}\centering  
\includegraphics[width=4.0cm,height=4.0cm,scale=0.66]{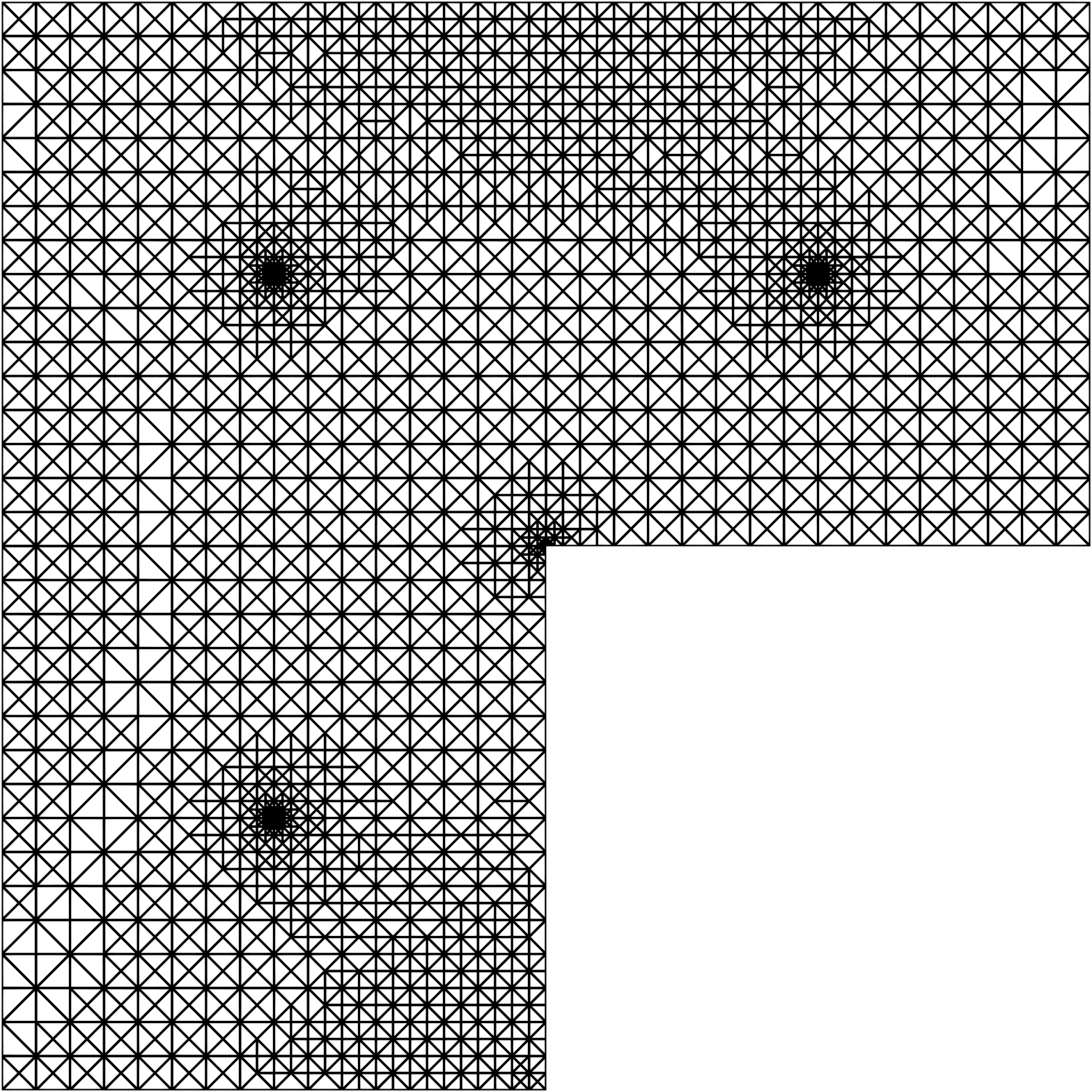} \\
\qquad \tiny{(C.6)}
\end{minipage}
\caption{Ex. 2: Experimental rates of convergence for the error estimators $\mathpzc{E}_{p,\mathscr{T}}$ (C.1), $\mathfrak{E}_{\mathscr{T}}$ (C.2), $\mathscr{E}_{\mathscr{T}}$ (C.3), and $\mathsf{E}_{\T}$ (C.4), for $p \in \{1.2, 1.4, 1.6, 1.8\}$, a finite element approximation of the temperature $T_h$ (C.5), and the mesh obtained after 23 iterations of the adaptive loop for $p = 1.6$ (C.6).}
\label{fig:test_02}
\end{figure}

\bibliographystyle{siamplain}
\bibliography{biblio}
\end{document}